\documentclass[a4paper]{article}
\usepackage{amssymb,amsmath,amsfonts,epsfig}
\usepackage{amsthm}

\catcode`@=11 \@addtoreset{equation}{section}
\renewcommand\theequation{\thesection.\@arabic\c@equation}
\catcode`@=12

\def\theequation{\thesection.\arabic{equation}}
\usepackage{amsmath,amsfonts,amssymb}
\usepackage{booktabs}
\usepackage{siunitx}        
\usepackage{multirow}       
\usepackage[table]{xcolor}  
\usepackage[title]{appendix}
\usepackage{bm}
\usepackage{algorithm}
\usepackage{amsmath,amsfonts,epsfig}
\usepackage{graphics}
\usepackage{supertabular}
\usepackage{amsthm}
\usepackage{mathtools}
\usepackage{amsfonts}
\usepackage{subfig}
\usepackage{tikz}
\usepackage{pgfplots}
\pgfplotsset{compat=1.18}

\usepackage{graphicx}
\usepackage{epstopdf}
\usepackage[numbers]{natbib}
\usepackage{float,epsfig, floatflt,here}
\usepackage{color}
\usepackage[colorlinks=true,citecolor=blue,linkcolor=blue]{hyperref}
\usepackage{fancyhdr}
\usepackage{etoolbox}
\usepackage{dsfont }
\usepackage{accents}
\newtheorem{remk}{\bf Remark}[section]

\usepackage{mathrsfs}
\usepackage{ upgreek }

\newcommand{\tnorm}[1]{{\left\vert\kern-0.25ex\left\vert\kern-0.25ex\left\vert #1\right\vert\kern-0.25ex\right\vert\kern-0.25ex\right\vert}}

\newcommand{\vertiii}[1]{{\left\vert\kern-0.25ex\left\vert\kern-0.25ex\left\vert #1
		\right\vert\kern-0.25ex\right\vert\kern-0.25ex\right\vert}}

\newtheorem{thm}{Theorem}[section]

\newtheorem{lemma}{Lemma}[section]

\renewcommand{\theequation}{A.\arabic{equation}}
\numberwithin{equation}{section}
\numberwithin{figure}{section}
\numberwithin{table}{section}
\title{Lyapunov Stability and Optimal Error Estimates for an SIPG Method for Weakly Damped Semilinear Wave Equations}
	\author{Ajeet Singh\footnote{Department of Mathematics, Indian Institute of Technology, Palaj, Gandhinagar, 382055, Gujarat, India \texttt{asingh@iitgn.ac.in}} \and  Abhinav Jha\footnote{Department of Mathematics, Indian Institute of Technology, Palaj, Gandhinagar, 382055, Gujarat, India \texttt{abhinav.jha@iitgn.ac.in}}}
\usepackage[a4paper, total={6.5in, 8.5in}]{geometry}
\usepackage{mathrsfs}
\begin{document}
\maketitle

\begin{abstract}
We develop and analyze a fully discrete scheme for the weakly damped
semilinear wave equation that combines a Symmetric Interior Penalty
Discontinuous Galerkin (SIPG) spatial discretization with a hybrid
Crank--Nicolson/second-order Backward Differentiation Formula
(CN--BDF2) time integrator.  A chord-slope linearization of the
nonlinear reaction term is employed, which preserves an exact discrete
gradient structure and, crucially, requires {no global Lipschitz
continuity assumption} on the nonlinearity.  Stability of the
fully discrete solution is established through a {Lyapunov-based}
analysis-rather than spectral arguments-by constructing a discrete
Lyapunov functional that yields existence, uniqueness, and
uniform boundedness of the numerical solution.  Under standard
regularity assumptions, optimal a~priori error estimates of order
$\mathcal{O}(h^{k}+\tau^{2})$ in the DG energy norm and
$\mathcal{O}(h^{k+1}+\tau^{2})$ in the $L^{2}$-norm are proved,
where $h$ is the mesh size, $\tau$ the time step, and $k$ the
polynomial degree.  Numerical experiments on two-dimensional problems
with linear, cubic, and trigonometric nonlinearities confirm the
theoretical convergence rates and illustrate the long-time
energy-dissipation properties guaranteed by the Lyapunov structure.
\end{abstract}
\medskip
\textbf{Keywords:} Semilinear damped wave equation,
Symmetric interior penalty discontinuous Galerkin method,
Crank--Nicolson/BDF2 time stepping,
Discrete Lyapunov stability,
Optimal error estimates

\medskip\noindent
\textbf{Mathematics Subject Classification (2020):} 65M60, 65M12, 65M15, 35L71.
\section{Introduction}
\label{sec:intro}

Semilinear wave equations and nonlinear Klein--Gordon (KG) type
models arise in a broad range of scientific and engineering
applications, including nonlinear optics, plasma physics, acoustics,
fluid dynamics, seismic wave propagation, relativistic quantum
mechanics, and nonlinear elastic materials~\cite{arrieta1992damped,ball2004global}.
Of particular interest are \emph{weakly damped} semilinear wave
models, where a dissipative term $\sigma u_t$ gradually reduces the
wave energy while preserving the underlying hyperbolic oscillatory
structure. The strong coupling between the hyperbolic wave operator
and the nonlinear reaction makes the construction of stable, accurate,
and robust numerical schemes a challenging and active research topic.

\subsection{Scope and Problem Formulation}

We consider the weakly damped semilinear wave equation
\begin{equation}\label{eq:model}
  u_{tt} + \sigma u_t - \nabla\cdot(\kappa\nabla u) + g(u) = f
  \quad\text{in }\Omega\times(0,T],
\end{equation}
subject to homogeneous Dirichlet boundary and initial conditions
\begin{equation}\label{eq:bc-ic}
  u = 0 \;\text{ on }\;\partial\Omega\times(0,T],\qquad
  u(\cdot,0) = u_0,\quad u_t(\cdot,0) = u_1 \;\text{ in }\;\Omega.
\end{equation}
The following assumptions are imposed throughout this work.
\begin{itemize}
  \item[(A1)] The domain $\Omega\subset\mathbb{R}^d$, $d\ge 2$,
    is a bounded polygonal domain with the Lipschitz boundary $\partial \Omega$.
  \item[(A2)] The diffusion coefficient satisfies
    $0<\kappa_0\le\kappa(x)\le\kappa_1$ for all $x\in\Omega$.
  \item[(A3)] The damping parameter satisfies $\sigma>0$.
  \item[(A4)] The nonlinear term $g\in C^1(\mathbb{R})$ satisfies the growth and monotonicity conditions.  Its primitive $F$ (with $F'=g$)
    satisfies $F(s)\ge -c_1$ for all $s$, for some constant $c_1$.
  \item[(A5)] The forcing term and exact solution satisfy
    $f\in L^2(0,T;L^2(\Omega))$ and
\end{itemize}

To facilitate a second-order BDF2 time discretization, we recast
\eqref{eq:model} as a first-order system by introducing $v:=u_t$:
\begin{equation}\label{eq:intro-system}
  u_t = v,\qquad
  v_t + \sigma v - \nabla\cdot(\kappa\nabla u) + g(u) = f
  \quad\text{in }\Omega\times(0,T],
\end{equation}
with initial data $(u(\cdot,0),v(\cdot,0))=(u_0,u_1)$.
The weak formulation of \eqref{eq:intro-system} seeks
$(u(t),v(t))\in H_0^1(\Omega)\times L^2(\Omega)$ such that
\begin{equation}\label{eq:weak}
\begin{cases}
    (v_t,\phi)+\sigma(v,\phi)+(\kappa\nabla u,\nabla\phi)+(g(u),\phi)
  =(f,\phi),\quad\forall\phi\in H_0^1(\Omega),\\
  (u_t,\psi)=(v,\psi) ~~\forall \psi\in L^2(\Omega)
\end{cases}
\end{equation}
\subsection{Literature Overview}

Extensive numerical studies have been devoted to semilinear wave
and KG equations over the past several decades.
Finite difference time-domain schemes for KG equations were studied
in \cite{ablowitz1979solitary,han2008split}, while finite element approximations were
introduced by Kirby et al.~\cite{kirby2013galerkin}, who proved optimal convergence
in the energy norm.
Bao et al.~\cite{bao2012analysis} derived optimal energy estimates and
sub-optimal $L^2$-norm bounds in one dimension.
Energy-preserving standard and mixed finite element methods were
analyzed in \cite{he2020energy}, and non-conforming approximations for the
sine-Gordon equation were studied in \cite{shi2013nonconforming}.
Weak Galerkin methods combined with Newmark time stepping were
analyzed in \cite{jana2024weak}. High-order numerical methods on polygonal meshes for semilinear
reaction--diffusion models, including Sobolev equations and the
Allen--Cahn model, were analyzed via the hybrid high-order (HHO)
framework in \cite{singh2025high,kumar2025error}.
A VEM for the linear weakly damped problem was proposed in
\cite{pradhan2023virtual}, and the existence, uniqueness, and long-time behavior
of the semilinear problem were investigated in \cite{arrieta1992damped,ball2004global}.
One-dimensional weakly damped semilinear equations were treated in
\cite{rincon2013numerical}, and optimal convergence for two-dimensional cubic
nonlinearities was shown by Achouri~\cite{achouri2022efficient}.
VEM formulations for weakly damped sine-Gordon equations were
developed in \cite{adak2020virtual}, and weak Galerkin methods with
explicit time stepping for damped wave problems were recently analyzed
in \cite{mohapatra2026numerical}. Optimal HHO error estimates and simulations for the FitzHugh--Nagumo
system were established in \cite{singh2025rigorous, maurya2026new}. For nonlinear transport and reaction-dominated problems,
stabilized finite element methodologies together with adaptive
refinement strategies have also been investigated in
\cite{MR4598790,MR5036959} and
Lyapunov-based a priori error estimates for the FitzHugh--Nagumo model
via interior penalty DG methods were derived in \cite{singh2026priori}.

Despite this rich body of work, Interior Penalty Discontinuous Galerkin (IPDG) methods for \emph{weakly damped
semilinear wave equations} in two and three dimensions have not been
analyzed.  The IPDG framework offers local conservation, geometric
flexibility on unstructured meshes, $hp$-adaptivity, and strong
stability properties for hyperbolic problems.  Motivated by these
advantages, the present work develops and analyzes an SIPG method for
\eqref{eq:model}.

A central analytical difficulty is the nonlinear reaction term. The numerical treatment of nonlinear partial differential equations
under weak regularity assumptions has attracted considerable
attention in recent years. In particular, stabilized finite element
approaches and residual-based analyses for nonlinear
convection--diffusion--reaction equations have been developed in
\cite{MR4271582,MR4781562,MR4757334,MR5036959},
demonstrating the importance of robust discretizations and
rigorous error control for nonlinear problems.

Many existing analyses require \emph{global} Lipschitz
continuity of $g$, excluding physically relevant polynomial
nonlinearities such as the cubic KG term. In the present work,
we relax this assumption and allow $g$ to satisfy only local growth
and monotonicity conditions. A discrete \emph{Lyapunov functional}
is constructed via the primitive $F$ of $g$ to establish uniform
boundedness of the fully discrete solution.

\subsection{Contributions}

The main contributions of this article are summarized below.
\begin{itemize}
  \item[(i)]
    We formulate a fully discrete SIPG scheme for the weakly damped
    semilinear wave equation \eqref{eq:model} in two and three space
    dimensions.  To the best of our knowledge, this is the
    first IPDG analysis for this class of problems.
  \item[(ii)]
We establish stability of the fully discrete scheme by developing a
novel \emph{Lyapunov-based} energy analysis that avoids both spectral
techniques and linearization-based arguments. The analysis relies on a
discrete Lyapunov functional specifically designed for the chord-slope
treatment of the nonlinearity. Unlike many existing approaches, the
proposed framework does not require a \textbf{global Lipschitz continuity
assumption} on the nonlinear reaction term~$g$; instead, only local
growth and monotonicity conditions are assumed. The discrete Lyapunov
functional also serves as the principal tool for establishing existence,
uniqueness, and uniform boundedness of the numerical solution.
  \item[(iii)]
    The CN--BDF2 time integrator (Crank--Nicolson at $n=1$, BDF2 for
    $n\ge2$) yields second-order temporal accuracy and couples
    naturally with the discrete Lyapunov structure and
    optimal a priori error estimates are derived:
    $\mathcal{O}(h^{k}+\tau^{2})$ in the DG energy norm and
    $\mathcal{O}(h^{k+1}+\tau^{2})$ in the $L^{2}$-norm, under
    standard regularity assumptions on the exact solution.
  \item[(iv)]
    Numerical experiments on linear, polynomial, and trigonometric
    nonlinearities confirm the theoretical rates and demonstrate the
    long-time energy-dissipation properties guaranteed by the Lyapunov
    framework.
\end{itemize}
The remainder of this article is organized as follows.
Section~\ref{sec:ipdg} develops the SIPG spatial discretization.
Section~\ref{sec:scheme} presents the fully discrete CN--BDF2 IPDG
formulation and proves discrete energy stability.
Section~\ref{sec:error} derives optimal a priori error estimates.
Numerical results are in Section~\ref{sec:numerics}, and conclusions
in Section~\ref{sec:conclusion}.

\section{Interior Penalty Discontinuous Galerkin Discretization}
\label{sec:ipdg}
\subsection{Discrete setting and notation}

Throughout the article, $\mathcal{T}_h$ stands for a quasi-uniform, shape-regular
triangulation of the domain $\Omega$ into triangles (2D) or tetrahedra (3D),
so that
$
\overline{\Omega} = \bigcup_{K\in\mathcal{T}_h}\overline{K}.
$
For each element $K\in\mathcal{T}_h$, the symbol $h_K$ refers to its
diameter, and the global mesh parameter is
$
h := \max_{K\in\mathcal{T}_h} h_K.
$

The broken Sobolev space and the discontinuous Galerkin polynomial space
associated with $\mathcal{T}_h$ are defined by
\[
  H^s(\mathcal{T}_h)
  := \bigl\{\,w\in L^2(\Omega)\;:\;
     w|_K\in H^s(K)\ \text{for every}\ K\in\mathcal{T}_h\bigr\},
\]
\[
  V_h
  := \bigl\{\,w_h\in L^2(\Omega)\;:\;
     w_h|_K\in\mathbb{P}_k(K)\ \text{for every}\ K\in\mathcal{T}_h\bigr\},
\]
where $\mathbb{P}_k(K)$ collects all polynomials of total degree at most
$k$ restricted to $K$.

We employ $\mathcal{E}_h^I$ and $\mathcal{E}_h^B$ for the collections of
interior and boundary faces of $\mathcal{T}_h$, respectively,
and set $\mathcal{E}_h:=\mathcal{E}_h^I\cup\mathcal{E}_h^B$.
The element-wise $L^2$-inner product is
\[
  (w,z)_{\mathcal{T}_h}
  := \sum_{K\in\mathcal{T}_h}\int_K w\,z\;dx,
\]
and, for any subfamily $\mathcal{S}_h\subset\mathcal{E}_h$, the
face-wise inner product reads
\[
  \langle w,z\rangle_{\mathcal{S}_h}
  := \sum_{e\in\mathcal{S}_h}\int_e w\,z\;ds.
\]

For a pair of elements $K^+,K^-\in\mathcal{T}_h$ sharing an interior face
$e=\partial K^+\cap\partial K^-$, the face unit normal is fixed as
$\mathbf{n}_e:=\mathbf{n}_{K^+}\big|_e=-\mathbf{n}_{K^-}\big|_e$.
The standard DG jump and average operators are then given, for
$w\in H^1(\mathcal{T}_h)$, by
\[
  [w] := w\big|_{K^+}-w\big|_{K^-}
  \ \text{on}\ e\in\mathcal{E}_h^I,
  \qquad
  [w] := w
  \ \text{on}\ e\in\mathcal{E}_h^B,
\]
\[
  \{w\} := \tfrac12\bigl(w\big|_{K^+}+w\big|_{K^-}\bigr)
  \ \text{on}\ e\in\mathcal{E}_h^I,
  \qquad
  \{w\} := w
  \ \text{on}\ e\in\mathcal{E}_h^B.
\]

\subsection{SIPG bilinear form and DG norm \cite{singh2026priori}}

The symmetric interior penalty Galerkin (SIPG) bilinear form is
\begin{align}\label{eq:ah}
  {\cal A}_h(w_h,z_h)
  &= \sum_{K\in\mathcal{T}_h}\int_K
      \kappa\,\nabla w_h\cdot\nabla z_h\;dx
    -\sum_{e\in\mathcal{E}_h}\int_e
      \{\kappa\nabla w_h\}\cdot[z_h]\;ds
    -\sum_{e\in\mathcal{E}_h}\int_e
      \{\kappa\nabla z_h\}\cdot[w_h]\;ds \notag\\
  &\quad+\sum_{e\in\mathcal{E}_h}
      \frac{\eta}{h_e}\int_e [w_h]\cdot[z_h]\;ds,
\end{align}
and the associated DG norm is
\begin{equation}\label{eq:dg-norm}
  \|w_h\|_{DG}^2
  := \sum_{K\in\mathcal{T}_h}\|\nabla w_h\|_K^2
    +\sum_{e\in\mathcal{E}_h}\frac{\eta}{h_e}\,\|[w_h]\|_e^2.
\end{equation}

\begin{lemma}[Coercivity]\label{lem:coercivity}
For a sufficiently large penalty parameter $\eta>0$,
there exists a constant $\alpha_0>0$, independent of $h$,
such that
\[
  {\cal A}_h(w_h,w_h) \;\ge\; \alpha_0\,\|w_h\|_{DG}^2,
  \qquad \forall\,w_h\in V_h.
\]
\end{lemma}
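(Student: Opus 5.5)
Setting $z_h=w_h$ in \eqref{eq:ah} gives
\[
  {\cal A}_h(w_h,w_h)=\sum_{K}\|\kappa^{1/2}\nabla w_h\|_K^2
  -2\sum_{e\in\mathcal{E}_h}\int_e\{\kappa\nabla w_h\}\cdot[w_h]\,ds
  +\sum_{e}\frac{\eta}{h_e}\|[w_h]\|_e^2 .
\]
The plan is to bound the middle (consistency) term by a small multiple of the volume term plus a multiple of the jump term, and then choose $\eta$ large enough. By (A2) the first term is at least $\kappa_0\sum_K\|\nabla w_h\|_K^2$.

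For the consistency term, on each face $e$ I apply Cauchy--Schwarz with the weight $h_e^{1/2}$:
\[
\Bigl|\int_e\{\kappa\nabla w_h\}\cdot[w_h]\Bigr|\le \|h_e^{1/2}\{\kappa\nabla w_h\}\|_e\,\|h_e^{-1/2}[w_h]\|_e .
\]
The key ingredient is the discrete trace (inverse) inequality for polynomials on shape-regular meshes,
\[
h_e\|\nabla w_h|_K\|_e^2\le C_{tr}\|\nabla w_h\|_K^2 \qquad \text{for } e\subset\partial K,
\]
with $C_{tr}$ depending only on $k$ and shape regularity. This is the step where the argument actually uses the fact that $w_h\in V_h$ and that the mesh is shape regular. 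Combining it with $\kappa\le\kappa_1$ and with the bound of at most $d+1$ faces per element gives
\[
\sum_e h_e\|\{\kappa\nabla w_h\}\|_e^2\le C_*\kappa_1^2\sum_K\|\nabla w_h\|_K^2 ,
\qquad C_*=(d+1)C_{tr}.
\]
Young's inequality with parameter $\epsilon>0$ then yields
\[
2\Bigl|\sum_e\int_e\{\kappa\nabla w_h\}\cdot[w_h]\Bigr|
\le \epsilon\sum_K\|\nabla w_h\|_K^2+\frac{C_*\kappa_1^2}{\epsilon}\sum_e h_e^{-1}\|[w_h]\|_e^2 .
\]

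Choose $\epsilon=\kappa_0/2$. This gives
\[
{\cal A}_h(w_h,w_h)\ge \frac{\kappa_0}{2}\sum_K\|\nabla w_h\|_K^2+\Bigl(\eta-\frac{2C_*\kappa_1^2}{\kappa_0}\Bigr)\sum_e h_e^{-1}\|[w_h]\|_e^2 .
\]
Now take $\eta\ge \eta_0:=4C_*\kappa_1^2/\kappa_0$. Then the coefficient of the jump term is at least $\eta/2$, that is, at least half of the jump part of $\|w_h\|_{DG}^2$ in \eqref{eq:dg-norm}. Hence the lemma holds with $\alpha_0=\min(\kappa_0/2,1/2)$, which is independent of $h$ because $C_{tr}$ is.

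The only real obstacle is justifying the discrete trace inequality with an $h$-independent constant. It follows from a scaling argument to the reference element, together with equivalence of norms on $\mathbb{P}_{k-1}$ and shape regularity. On boundary faces the average is one-sided, so the same bound holds with an even better constant.
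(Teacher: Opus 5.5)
Your argument is correct and is the standard SIPG coercivity proof: a discrete trace inequality on each face, Young's inequality with $\epsilon=\kappa_0/2$, and a penalty $\eta\ge 4(d+1)C_{tr}\kappa_1^2/\kappa_0$, giving $\alpha_0=\min(\kappa_0/2,1/2)$. The paper states this lemma without proof and takes it from the reference cited in the subsection heading, where it is proved by this same route. One small correction: on boundary faces the one-sided average has weight $1$ on its element rather than $\tfrac12$, so boundary faces are the worst case, not a better one; your constant $C_*=(d+1)C_{tr}$ already covers this, so the argument is unaffected.
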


\subsection{$L^2$-projection and approximation estimates \cite{di2011mathematical}}

\paragraph{Element-wise estimate on a shape-regular mesh.}
For each $K\in\mathcal{T}_h$, let
$\Pi_h:L^2(K)\to\mathbb{P}_k(K)$
denote the $L^2$-orthogonal projector onto polynomials of degree at most
$k$. If $w|_K\in H^{s_K+1}(K)$ with $s_K\ge0$ and
$t_K:=\min\{s_K,k\}$, then for $m\in\{0,1\}$,
\begin{equation}\label{eq:local-approx}
  \|w-\Pi_h w\|_{H^m(K)}
  \;\le\; C\,h_K^{\,t_K+1-m}\,|w|_{H^{t_{K}+1}(K)},
  \qquad \forall\,K\in\mathcal{T}_h.
\end{equation}
When $w|_K\in H^{k+1}(K)$ (i.e.\ $s_K\ge k$), the above reduces to
\begin{equation}\label{eq:local-optimal}
  \|w-\Pi_h w\|_{H^m(K)}
  \;\le\; C\,h_K^{\,k+1-m}\,|w|_{H^{k+1}(K)},
  \qquad m=0,1.
\end{equation}

\paragraph{Global rates under quasi-uniformity.}
Assuming additionally that $\mathcal{T}_h$ is quasi-uniform with mesh size
$h$ and $w\in H^{k+1}(\Omega)$, a summation of \eqref{eq:local-optimal}
over $\mathcal{T}_h$ produces the global bounds
\begin{equation}\label{eq:global-estimates}
  \|w-\Pi_h w\|_{L^2(\Omega)}
  \;\le\; C\,h^{k+1}\,|w|_{H^{k+1}(\Omega)},
  \qquad
  |w-\Pi_h w|_{H^1(\Omega)}
  \;\le\; C\,h^k\,|w|_{H^{k+1}(\Omega)}.
\end{equation}
Hence the $L^2$-projection attains the optimal rate
$\mathcal{O}(h^{k+1})$ in the $L^2$-norm and
$\mathcal{O}(h^k)$ in the $H^1$-seminorm.

\noindent\textbf{Inverse Inequality \cite{di2011mathematical}.}
For any $w_h\in V_h$, there exists a constant $C_{\mathrm{inv}}>0$,
depending only on the polynomial degree $k$ and the shape-regularity
of the mesh, such that
\begin{equation}\label{eq:inverse-ineq}
  \|w_h\|_{DG}\le C_{\mathrm{inv}}\,h^{-1}\|w_h\|,
  \qquad
  {\cal A}_h(w_h,\phi_h)\le C_{\mathrm{inv}}^2\,h^{-2}\|w_h\|\,\|\phi_h\|
  \quad\forall\,w_h,\phi_h\in V_h.
\end{equation}
The second bound follows from the first together with
the continuity of ${\cal A}_h$ on $V_h\times V_h$:
\[
  |{\cal A}_h(w_h,\phi_h)|
  \le \|w_h\|_{DG}\,\|\phi_h\|_{DG}
  \le C_{\mathrm{inv}}^2\,h^{-2}\|w_h\|\,\|\phi_h\|.
\]
\section{Fully Discrete CN--BDF2 SIPG Scheme}
\label{sec:scheme}

\noindent\textbf{SIPG semi-discrete form.}
Find $(u_h,v_h)\in V_h\times V_h$ such that for all $\phi_h,\psi_h\in V_h$:
\begin{equation}\label{eq:semi}
  (\partial_t u_h,\psi_h)=(v_h,\psi_h),\qquad
  (\partial_t v_h,\phi_h)+\sigma(v_h,\phi_h)+{\cal A}_h(u_h,\phi_h)+(g(u_h),\phi_h)=(f,\phi_h).
\end{equation}

\medskip\noindent\textbf{Temporal discretization and notation.}
For a positive integer $N$, let $\tau=T/N$ denote the time-step size
for the uniform partition
$0=t_0<t_1<\cdots<t_N=T$ of the interval $[0,T]$.
For a continuous function $\omega:[0,T]\to L^2(\Omega)$, we write
$\omega^n=\omega(\cdot,t_n)$ and define the backward difference
quotients
\[
  {\partial}_{\tau}\omega^n
  :=\frac{\omega^{n}-\omega^{n-1}}{\tau},\qquad
  \partial^2_{\tau}\omega^n
  :=\frac{\partial_{\tau}\omega^n-\partial_{\tau}\omega^{n-1}}{\tau},
  \qquad 2\le n\le N.
\]
The second-order backward differentiation formula (BDF2) operator is
then given by \cite{di2011mathematical}
\begin{equation}\label{eq:bdf2op}
  D^{(2)}_t{\omega}^n
  := \partial_{\tau}{\omega}^n+\frac{\tau}{2}\partial^2_{\tau}{\omega}^n
  = \frac{3\omega^n-4\omega^{n-1}+\omega^{n-2}}{2\tau},
  \qquad n\ge2,
\end{equation}
where the sequence $\{\omega^n\}_{n=0}^N \subset L^2(\Omega)$.
The midpoint average is denoted by
$\omega^{n-1/2}:=(\omega^n+\omega^{n-1})/2$.
The chord-slope nonlinearity ($F'=g$) \cite{acharya2025conservative}:
\begin{equation}\label{eq:chord}
  G(a,b):=\begin{cases}(F(a)-F(b))/(a-b),&a\ne b,\\ g(a),&a=b,\end{cases}
  \quad\text{so}\quad
  (G(a,b),\,a-b)=(F(a)-F(b),1). 
\end{equation}
\subsection{Preliminary results}
\begin{lemma}[Properties of the chord-slope operator $G$ \cite{acharya2025conservative}]\label{lem:G-properties}
Let $G$ be defined by \eqref{eq:chord} with $F'=g$.
\begin{itemize}
\item[\textup{(i)}] \textbf{Discrete gradient identity:}
For all $a,b\in V_h$,
\begin{equation}\label{eq:G-identity}
  (G(a,b),\,a-b) = (F(a)-F(b),\,1).
\end{equation}
\item[\textup{(ii)}] \textbf{MVT-bound:}
By the Mean Value Theorem, there exists a constant $L_g$ such that for $a_i,b_i$ ($i=1,2$) in a bounded subset of $L^\infty(\Omega)$,
\begin{equation}\label{eq:G-lip}
  \|G(a_1,b_1)-G(a_2,b_2)\|
  \le C_g\bigl(\|a_1-a_2\|+\|b_1-b_2\|\bigr),
\end{equation}
where $C_g>0$ is a constant. 
\item[\textup{(iii)}] \textbf{Consistency with $g$:}
For a smooth function $w$ and $\hat\partial_t w^n:=(w^{n+1}-w^{n-1})/(2\tau)$,
\begin{equation}\label{eq:G-consistency}
  \|G(w^{n+1},w^{n-1})-g(w^n)\|
  \le C\tau^2\|w_{ttt}\|_{L^\infty(t_{n-1},t_{n+1};L^\infty)}.
\end{equation}
\end{itemize}
\end{lemma}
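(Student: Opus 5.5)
The plan is to derive all three properties from the pointwise integral representation
\[
  G(a,b)=\int_0^1 g\bigl(b+s(a-b)\bigr)\,ds ,
\]
which holds for all real $a,b$. For $a\ne b$ it follows from the fundamental theorem of calculus applied to $F$ along the segment from $b$ to $a$. For $a=b$ it reduces to $g(a)$, consistent with \eqref{eq:chord}. Since elements of $V_h$ are piecewise polynomials, hence bounded and measurable, the composition $G(a,b)$ is a well-defined function on $\Omega$, and every estimate below is first proved pointwise and then integrated.

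For \textup{(i)}, the definition \eqref{eq:chord} gives the pointwise identity $G(a(x),b(x))\,(a(x)-b(x))=F(a(x))-F(b(x))$. In the case $a(x)=b(x)$ both sides vanish. Integrating over $\Omega$ yields \eqref{eq:G-identity}.

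For \textup{(ii)}, we fix $M$ with $\|a_i\|_{L^\infty},\|b_i\|_{L^\infty}\le M$. Every convex combination $b_i+s(a_i-b_i)$ then lies in $[-M,M]$. Set $C_g:=\max_{|r|\le M}|g'(r)|$, which is finite because $g\in C^1$. Subtracting the two integral representations and applying the mean value theorem inside the integral gives, pointwise,
\[
  |G(a_1,b_1)-G(a_2,b_2)|
  \le C_g\int_0^1\bigl(s|a_1-a_2|+(1-s)|b_1-b_2|\bigr)\,ds
  \le C_g\bigl(|a_1-a_2|+|b_1-b_2|\bigr).
\]
Taking the $L^2$ norm and using the triangle inequality gives \eqref{eq:G-lip}. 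This is where the local (rather than global) Lipschitz character enters, and why the $L^\infty$-boundedness hypothesis is needed.

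For \textup{(iii)}, I would use the symmetric form of the representation. Put $a=w^{n+1}$, $b=w^{n-1}$, $m=(a+b)/2$ and $d=(a-b)/2=\tau\,\hat\partial_t w^n$. Then
\[
  G(a,b)=\tfrac12\int_{-1}^{1}g(m+\theta d)\,d\theta .
\]
Taylor expanding $g$ about $m$ to second order, the odd (linear) term integrates to zero, which leaves
\[
  G(a,b)=g(m)+\mathcal{O}\bigl(d^2\,\|g''\|_{L^\infty}\bigr).
\]
Next we compare $g(m)$ with $g(w^n)$. A Taylor expansion in time gives $m-w^n=\tfrac{\tau^2}{2}w_{tt}(\xi)$, so $|g(m)-g(w^n)|\le C\tau^2\|w_{tt}\|_{L^\infty}$. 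In addition, $d=\mathcal{O}(\tau\|w_t\|_{L^\infty})$, so $d^2=\mathcal{O}(\tau^2)$. Collecting these, the consistency error is $\mathcal{O}(\tau^2)$ pointwise, and integrating over the bounded domain $\Omega$ gives the $L^2$ bound.

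The main obstacle is this last step. The cancellation requires $g\in C^2$ (at least locally), which is stronger than the $C^1$ assumed in (A4). Moreover, the constant that actually appears involves $\|g''\|$ on the range of $w$, $\|w_t\|_{L^\infty}^2$ and $\|w_{tt}\|_{L^\infty}$, rather than $\|w_{ttt}\|$ as written in \eqref{eq:G-consistency}. I would therefore state the extra assumption $g\in C^2$ explicitly and absorb these quantities into $C$, which depends on $w$ and $g$. The $\tau^2$ order itself is robust, because both the odd-term cancellation and the midpoint expansion contribute only second-order terms.
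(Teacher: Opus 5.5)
Your argument is correct. Part (i) matches the paper. For (ii) and (iii) you take a different and more careful route.

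\textbf{Part (ii).} The paper invokes the mean value theorem and then an algebraic expansion from the cited reference,
$G(x_1,y_1)-G(x_2,y_2)=(x_1-x_2)P+(y_1-y_2)Q$ with $P,Q$ \emph{polynomials}. That expansion is only literally available when $F$ is a polynomial. Your representation $G(a,b)=\int_0^1 g(b+s(a-b))\,ds$ gives the same local Lipschitz bound for every $g\in C^1$, with the explicit constant $\max_{|r|\le M}|g'(r)|$. This covers $g=\sin$ from the numerical section.

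\textbf{Part (iii).} The paper writes a Taylor expansion in time and then asserts $G(w^{n+1},w^{n-1})=g(w^n)+O(\tau^2)$ ``by the symmetric structure of the chord slope''. Your symmetric form $\tfrac12\int_{-1}^1 g(m+\theta d)\,d\theta$ makes that symmetry precise. It gives, pointwise,
\[
  \bigl|G(w^{n+1},w^{n-1})-g(w^n)\bigr|
  \le \tau^2\Bigl(\tfrac16\max|g''|\,\|w_t\|_{L^\infty}^2+\tfrac12\max|g'|\,\|w_{tt}\|_{L^\infty}\Bigr),
\]
with the maxima taken over the range of $w$ on $[t_{n-1},t_{n+1}]$.

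Your reservation about the stated form of (iii) is justified, and you can state it more strongly: the bound with only $\|w_{ttt}\|$ on the right is false. Take $g(s)=s^2$ and $w(x,t)=t$. Then $G(a,b)=(a^2+ab+b^2)/3$, so $G(w^{n+1},w^{n-1})-g(w^n)=\tau^2/3\neq 0$, while $w_{ttt}\equiv 0$. This is exactly your $d^2g''/6$ term with $d=\tau$; the $w_{tt}$ term vanishes because $m=w^n$.

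Your extra hypothesis is also genuinely needed. With $g$ merely $C^1$, the odd-term cancellation leaves only an $o(\tau)$ remainder. So $g'$ must be locally Lipschitz (e.g.\ $g\in C^2$) to get the $\tau^2$ rate.

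The correct statement is therefore $\|G(w^{n+1},w^{n-1})-g(w^n)\|\le C(w,g)\,\tau^2$, which is what your proof establishes. The paper's argument, as written, does not justify anything sharper than this.
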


\begin{proof}
Part~(i) follows directly from the definition \eqref{eq:chord}.

For part~(ii), since $G(x,y)=(F(x)-F(y))/(x-y)$ for $x\ne y$,
the mean value theorem gives $G(x,y)=g(\zeta)$ for some $\zeta$
between $x$ and $y$. A detailed algebraic expansion (cf.~\cite[Lemma~4.4]{acharya2025conservative})
yields
\[
  G(x_1,y_1)-G(x_2,y_2)
  = (x_1-x_2)\,P(x_1,x_2,y_1,y_2)
  + (y_1-y_2)\,Q(x_1,x_2,y_1,y_2),
\]
where $P,Q$ are polynomials bounded on bounded sets,
and the $L^2$-norm estimate \eqref{eq:G-lip} follows by H\"older's inequality.

For part~(iii), Taylor expansion about $t_n$ gives
$w^{n\pm1}=w^n\pm\tau w_t^n+\frac{\tau^2}{2}w_{tt}^n\pm\frac{\tau^3}{6}w_{ttt}^n+O(\tau^4)$.
Since $G(w^{n+1},w^{n-1})=g(w^n)+O(\tau^2)$ by the symmetric structure of the
chord slope, the bound \eqref{eq:G-consistency} follows.
\end{proof}

\begin{lemma}[Taylor expansion with integral remainder]\label{lem:taylor}
For $w\in H^3(t_n-\tau,t_n+\tau)$, there holds
\begin{equation}\label{eq:taylor-hat}
  \left\|\frac{w(t_n+\tau)-w(t_n-\tau)}{2\tau}-w'(t_n)\right\|^2
  \le C\tau^3\int_{t_n-\tau}^{t_n+\tau}\|w'''(s)\|^2\,\mathrm{d}s.
\end{equation}
For $w\in H^4(t_{n-1},t_{n+1})$, the BDF2 consistency error satisfies
\begin{equation}\label{eq:taylor-bdf2}
  \left\|D^{(2)}_t w^n - w'(t_n)\right\|^2
  \le C\tau^3\int_{t_{n-2}}^{t_n}\|w'''(s)\|^2\,\mathrm{d}s.
\end{equation}
\end{lemma}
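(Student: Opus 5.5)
Both estimates reduce to Taylor's theorem with integral remainder, followed by Cauchy--Schwarz in time. Everything is done pointwise in $x$ and then integrated over $\Omega$, or directly for Bochner-valued $w$ with values in $L^2(\Omega)$. The integral form of the remainder is what gives the $\tau^3\int\|w'''\|^2$ structure, rather than a $\tau^4\sup\|w'''\|^2$ bound.

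\textbf{Centered difference.} Expand about $t_n$:
\[
w(t_n\pm\tau)=w(t_n)\pm\tau w'(t_n)+\tfrac{\tau^2}{2}w''(t_n)+R_\pm,\qquad
R_\pm=\tfrac12\int_{t_n}^{t_n\pm\tau}(t_n\pm\tau-s)^2w'''(s)\,\mathrm{d}s .
\]
Subtracting the two expansions, the $w(t_n)$ and $w''(t_n)$ terms cancel, so
\[
\frac{w(t_n+\tau)-w(t_n-\tau)}{2\tau}-w'(t_n)=\frac{R_+-R_-}{2\tau}.
\]
Take the $L^2(\Omega)$ norm, then Minkowski's integral inequality and Cauchy--Schwarz in $s$:
\[
\|R_\pm\|\le \tfrac12\Big(\int_{I_\pm}(t_n\pm\tau-s)^4\,\mathrm{d}s\Big)^{1/2}\Big(\int_{I_\pm}\|w'''\|^2\,\mathrm{d}s\Big)^{1/2}\le C\tau^{5/2}\Big(\int_{I_\pm}\|w'''\|^2\,\mathrm{d}s\Big)^{1/2},
\]
where $I_\pm$ are the two half-intervals. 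Dividing by $2\tau$ and squaring gives the bound $C\tau^3\int_{t_n-\tau}^{t_n+\tau}\|w'''\|^2\,\mathrm{d}s$, which is \eqref{eq:taylor-hat}.

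\textbf{BDF2.} Here the expansion must be about $t_n$ itself, using backward points only:
\[
w^{n-1}=w^n-\tau w'^n+\tfrac{\tau^2}{2}w''^n+R_1,\qquad
w^{n-2}=w^n-2\tau w'^n+2\tau^2 w''^n+R_2,
\]
with $R_j=-\tfrac12\int_{t_{n-j}}^{t_n}(s-t_{n-j})^2w'''(s)\,\mathrm{d}s$. Form $3w^n-4w^{n-1}+w^{n-2}$:
\begin{itemize}
\item the $w^n$ coefficients give $3-4+1=0$;
\item the $w'^n$ coefficients give $4\tau-2\tau=2\tau$;
\item the $w''^n$ coefficients give $-2\tau^2+2\tau^2=0$.
\end{itemize}
Hence
\[
D^{(2)}_tw^n-w'(t_n)=\frac{-4R_1+R_2}{2\tau}.
\]
Each $R_j$ satisfies $\|R_j\|\le C\tau^{5/2}\big(\int_{t_{n-2}}^{t_n}\|w'''\|^2\,\mathrm{d}s\big)^{1/2}$ by the same Cauchy--Schwarz argument. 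Squaring and using $(a+b)^2\le2a^2+2b^2$ yields \eqref{eq:taylor-bdf2}. Only $w\in H^3$ is needed; the $H^4$ hypothesis is more than sufficient. The statement writes the interval as $(t_{n-1},t_{n+1})$, but the integration actually runs over $(t_{n-2},t_n)$, so I would note this slight mismatch and take $w$ regular on $[t_{n-2},t_n]$.

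\textbf{Main obstacle.} Neither step is deep. The only real points of care are:
\begin{itemize}
\item confirming that the second-order terms cancel exactly in the BDF2 combination, which is what secures second-order consistency;
\item justifying the vector-valued Taylor formula for $w\in H^3(I;L^2(\Omega))$. This follows by density of smooth functions, or because $H^3(I;L^2)$ embeds in $C^2(\bar I;L^2)$ and $w''$ is absolutely continuous with derivative $w'''$.
\end{itemize}
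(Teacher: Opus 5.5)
Your proof is correct and follows the same route as the paper: Taylor expansion about $t_n$ with integral remainder, then H\"older/Cauchy--Schwarz in time together with $(a+b)^2\le 2(a^2+b^2)$, where the cancellation of the lower-order terms leaves the $\tau^3\int\|w'''\|^2$ bound. For the BDF2 part the paper only cites earlier work, so your explicit coefficient check ($3-4+1=0$, $4\tau-2\tau=2\tau$, $-2\tau^2+2\tau^2=0$) supplies the omitted detail, and you are right that regularity is needed on $[t_{n-2},t_n]$ and only at the $H^3$ level, rather than $H^4(t_{n-1},t_{n+1})$ as the statement says.
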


\begin{proof}
By Taylor's expansion with integral remainder:
\[
  w(t_n\pm\tau)=w(t_n)\pm\tau w'(t_n)+\frac{\tau^2}{2}w''(t_n)
  +\int_{t_n}^{t_n\pm\tau}\frac{(t_n\pm\tau-s)^2}{2}w'''(s)\,\mathrm{d}s.
\]
Subtracting and dividing by $2\tau$, then squaring and applying
$(a+b)^2\le 2(a^2+b^2)$ with H\"older's inequality yields \eqref{eq:taylor-hat}.



Recall the BDF2 operator
$D^{(2)}_t w^n = (3w^n - 4w^{n-1} + w^{n-2})/(2\tau)$.
Applying Taylor's theorem with integral remainder about $t_n$ we can obtain \eqref{eq:taylor-bdf2} bound \cite{singh2025rigorous,singh2026priori}.
\end{proof}

\medskip\noindent
\textbf{Fully discrete CN--BDF2 SIPG scheme.}
Given $(u_h^0,v_h^0)=(\Pi_h u_0,\Pi_h v_0)$, find
$(u_h^n,v_h^n)\in V_h\times V_h$ for $n\ge1$:

\smallskip
\textit{$n=1$ (Crank--Nicolson initialization):}
Find $(u_h^1,v_h^1)$ such that for all $\phi_h,\psi_h\in V_h$,
\begin{subequations}\label{eq:cn}
\begin{align}
  \bigl(\partial_\tau u_h^1,\,\phi_h\bigr) &= \bigl(v_h^{1/2},\,\phi_h\bigr),
  \label{eq:cn-u}\\
  \bigl(\partial_\tau v_h^1,\,\psi_h\bigr)
  +\sigma\bigl(v_h^{1/2},\,\psi_h\bigr)
  +{\cal A}_h\bigl(u_h^{1/2},\,\psi_h\bigr)
  +\bigl(G(u_h^1,u_h^{-1}),\,\psi_h\bigr)
  &= \bigl(f^{1/2},\,\psi_h\bigr),
  \label{eq:cn-v}
\end{align}
\end{subequations}
where the midpoint average notation $\varphi^{1/2}:=(\varphi^1+\varphi^0)/2$
is used and $u_h^{-1}:=u_h^0-\tau v_h^0$.

\smallskip
\textit{$n\ge2$ (BDF2):}
Find $(u_h^n,v_h^n)$ such that for all $\phi_h,\psi_h\in V_h$,
\begin{subequations}\label{eq:bdf2}
\begin{align}
  \bigl(D^{(2)}_t u_h^n,\,\phi_h\bigr) &= \bigl(v_h^n,\,\phi_h\bigr),
  \label{eq:bdf2-u}\\
  \bigl(D^{(2)}_t v_h^n,\,\psi_h\bigr)
  +\sigma\bigl(v_h^n,\,\psi_h\bigr)
  +{\cal A}_h\bigl(u_h^n,\,\psi_h\bigr)
  +\bigl(G(u_h^n,u_h^{n-2}),\,\psi_h\bigr)
  &= \bigl(f^n,\,\psi_h\bigr).
  \label{eq:bdf2-v}
\end{align}
\end{subequations}
\medskip
Next, we introduce the Lyapunov functional defined by
\begin{equation}\label{eq:lyap}
  \mathcal{Z}_h^n=\mathcal{Z}_h(u_h^n) := \tfrac{1}{2}{\cal A}_h(u_h^n,u_h^n)+\tau(F(u_h^n),1),
\end{equation}
\begin{remk}[Role of the value $u_h^{-1}$]\label{rem:ghost}
The auxiliary quantity $u_h^{-1}:=u_h^0-\tau v_h^0$ is a first-order
backward extrapolation of the initial data to the fictitious time level
$t_{-1}=-\tau$; it is not an additional unknown but is fully determined
by $(u_h^0,v_h^0)$. Its purpose is to pair the Crank--Nicolson
nonlinearity $G(u_h^1,u_h^{-1})$ with the stride-two difference
$(u_h^1-u_h^{-1})/(2\tau)$, so that the discrete gradient identity
$(G(u_h^1,u_h^{-1}),\,u_h^1-u_h^{-1})=(F(u_h^1)-F(u_h^{-1}),1)$
holds exactly at $n=1$ and telescopes seamlessly into the BDF2 energy
at $n\ge2$. Since $u_h^{-1}$ approximates $u(-\tau)$ to
$\mathcal{O}(\tau^2)$, its introduction is consistent with the overall
second-order temporal accuracy of the CN-BDF2 scheme. Moreover, in
the uniqueness analysis, two solutions
sharing the same initial data necessarily share the same $u_h^{-1}$ value,
yielding $Z_u^{-1}=0$ and thereby eliminating the nonlinear residual
at $n=1$.
\end{remk}
\begin{lemma}[Stability]\label{lem:energy}
Let Assumptions \textup{(A1)--(A5)} hold, let $0<\sigma<2$, and let $0<\tau<2/3$.
Then the solution $(u_h^n,v_h^n)_{n\ge0}$ of \eqref{eq:cn}--\eqref{eq:bdf2}
satisfies, for all $N\ge1$,
\begin{equation}\label{eq:energy-id}
  \frac{\tau}{4}\|\partial_\tau v_h^N\|^2
  + \frac{\sigma}{2}\|v_h^N\|^2
  + \mathcal{Z}_h(u_h^N)
  \le \frac{2(2-\tau)}{2-3\tau}
  \Bigl(\frac{\tau}{2\varepsilon}\sum_{n=0}^{N}\|f^n\|^2
  + \frac{5+\tau\sigma}{4}\|v_h^0\|^2 + 2\mathcal{Z}_h^0\Bigr),
\end{equation}
$\mathcal{Z}_h^0:=\mathcal{Z}_h(u_h^0)$,
and the constant $\frac{2(2-\tau)}{2-3\tau}$ is independent of $h$
and uniformly bounded for $\tau\in(0,\tau_0)$ with any fixed $\tau_0<2/3$.
Consequently,
\begin{equation}\label{eq:apriori}
  \|v_h^N\|\le C,\quad \|u_h^N\|_{H^1(\Omega)}\le C,\quad
  \|u_h^N\|_{L^p(\Omega)}\le C,\quad 1\le p<\infty,
\end{equation}
where $C=C\bigl(\sigma,\|u^0\|,\|v^0\|,\|f\|\bigr)$
is independent of $h$ and $\tau$.
\end{lemma}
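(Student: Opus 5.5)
The plan is a discrete energy argument: test the second equation with a difference of $u_h$, use the first equation to trade that difference for $v_h$, and let the chord-slope identity of Lemma~\ref{lem:G-properties}(i) turn the nonlinear term into a telescoping difference of $(F(\cdot),1)$.

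\textbf{Step 1 (start-up step, $n=1$).} In \eqref{eq:cn-v} we take $\psi_h=u_h^1-u_h^{-1}$. The first equation \eqref{eq:cn-u} together with $u_h^{-1}=u_h^0-\tau v_h^0$ gives $u_h^1-u_h^{-1}=\tau v_h^{1/2}+\tau v_h^0$, so every term can be written through $v_h^1$ and $v_h^0$. Then:
\begin{itemize}
\item the inertial term yields $\tfrac12(\|v_h^1\|^2-\|v_h^0\|^2)$ plus a cross term in $v_h^0$;
\item the damping term gives $\sigma\tau\|v_h^{1/2}\|^2$ up to $v_h^0$ pieces;
\item the stiffness term becomes $\tfrac12{\cal A}_h(u_h^1,u_h^1)-\tfrac12{\cal A}_h(u_h^0,u_h^0)$ plus a ${\cal A}_h(u_h^{1/2},\tau v_h^0)$ remainder, which is rewritten via the identity for $u_h^{1/2}$;
\item the nonlinear term equals $(F(u_h^1)-F(u_h^{-1}),1)$ exactly.
\end{itemize}
All $v_h^0$ contributions are bounded by Young's inequality. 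They produce the $\frac{5+\tau\sigma}{4}\|v_h^0\|^2$ and $2\mathcal{Z}_h^0$ terms on the right of \eqref{eq:energy-id}. The quantity $F(u_h^{-1})$ is absorbed into $\mathcal{Z}_h^0$, using (A4) and the definition of $u_h^{-1}$.

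\textbf{Step 2 (BDF2 steps, $n\ge2$).} In \eqref{eq:bdf2-v} we take $\psi_h=u_h^n-u_h^{n-2}=\tau(\partial_\tau u_h^n+\partial_\tau u_h^{n-1})$. The nonlinear term then telescopes with stride two:
\[
\bigl(G(u_h^n,u_h^{n-2}),u_h^n-u_h^{n-2}\bigr)=(F(u_h^n)-F(u_h^{n-2}),1).
\]
The stiffness term also splits as a stride-two difference of $\tfrac12{\cal A}_h(u_h^\cdot,u_h^\cdot)$, by symmetry of ${\cal A}_h$. To handle the inertial term we express $\psi_h$ through $v_h$ via \eqref{eq:bdf2-u}, and apply the standard BDF2 identity
\[
2(3a-4b+c)a=|a|^2-|b|^2+|2a-b|^2-|2b-c|^2+|a-2b+c|^2 .
\]
This produces a telescoping $G$-norm-type quantity that controls $\tfrac{\tau}{4}\|\partial_\tau v_h^N\|^2$ and $\|v_h^N\|^2$ at the final level. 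The damping term supplies $\sigma\tau\|v_h^n\|^2$. The forcing term is estimated by $\frac{\tau}{2\varepsilon}\|f^n\|^2+\frac{\varepsilon\tau}{2}\|v_h^n\|^2$, with $\varepsilon$ small enough that the second piece is absorbed by the damping.

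\textbf{Step 3 (summation and closing).} We sum over $n=2,\dots,N$ and add the $n=1$ bound. Because the $F$ and ${\cal A}_h$ terms telescope with stride two, we obtain the sum of the energies at levels $N$ and $N-1$ on the left, bounded by initial data plus forcing. We then pass from this pair to level $N$ alone, and we remove the leftover terms of the form $\tau\|\cdot\|^2$ produced by the coupling $u_h^n-u_h^{n-2}\ne\tau v_h^n$. This absorption requires $1-\tfrac{3\tau}{2}>0$, and the factor $\frac{2(2-\tau)}{2-3\tau}$ together with the restrictions $\tau<2/3$ and $\sigma<2$ come from exactly this step.

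\textbf{Step 4 (consequences).} The bounds \eqref{eq:apriori} follow from \eqref{eq:energy-id}. Coercivity, Lemma~\ref{lem:coercivity}, turns ${\cal A}_h(u_h^N,u_h^N)$ into control of $\|u_h^N\|_{DG}^2$, and (A4) gives $F\ge -c_1$, hence $\tau(F(u_h^N),1)\ge -c_1\tau|\Omega|$. A discrete Poincaré inequality and the broken Sobolev embedding for DG spaces then give the $L^p$ bounds for $d\le 3$, since $p<\infty$ is only claimed in the 2D sense and otherwise up to $p\le6$.

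\textbf{Main obstacle.} The hard part is Step~2/3. Testing with $u_h^n-u_h^{n-2}$ does not align with the BDF2 structure in $v_h$, so the algebra must be arranged so that the indefinite cross terms are dominated by $\frac{\tau}{4}\|\partial_\tau v_h\|^2$ and the damping. I would first try writing $u_h^n-u_h^{n-2}$ through \eqref{eq:bdf2-u} as a combination of $v_h^n$, $v_h^{n-1}$ and previous increments, and then collect everything into a discrete Gr\"onwall-free absorption argument.
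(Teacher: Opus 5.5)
\textbf{There is a genuine gap, and the target inequality cannot be proved as stated.} The step that fails is the absorption in Step~1, and therefore also the closing in Step~3. With $\psi_h=u_h^1-u_h^{-1}$ the chord-slope identity gives $(F(u_h^1),1)-(F(u_h^{-1}),1)$ with weight one. But $\mathcal{Z}_h^0=\frac12{\cal A}_h(u_h^0,u_h^0)+\tau(F(u_h^0),1)$ contains the potential only with weight $\tau$, so $(F(u_h^{-1}),1)$ cannot be absorbed into it. Two further problems arise in the same step. Comparing $F(u_h^0-\tau v_h^0)$ with $F(u_h^0)$ would need growth and $L^\infty$ control. The stiffness remainder $\tau{\cal A}_h(u_h^{1/2},v_h^0)$ cannot be bounded by $\|v_h^0\|^2$ without an inverse inequality, which brings in $h$-dependence.

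This is not a technicality: \eqref{eq:energy-id} as written is false. Take $g(s)=\lambda s$, so $F=\lambda s^2/2\ge0$, which (A4) allows. Take $f=0$, $v_h^0=0$, and $u_h^0=w$ with ${\cal A}_h(w,\psi_h)=\gamma(w,\psi_h)$ for all $\psi_h\in V_h$. The discrete solution stays in $\mathrm{span}\{w\}$. For fixed $h$ and $\tau\to0$ it converges to $a(t)w$, where $a''+\sigma a'+(\gamma+\lambda)a=0$, $a(0)=1$, $a'(0)=0$. For $\lambda\gg\gamma$ the velocity reaches $|a'|\approx\sqrt{\gamma+\lambda}$ after a quarter period. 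At that time the left-hand side of \eqref{eq:energy-id} is about $\frac{\sigma}{2}(\gamma+\lambda)\|w\|^2$. The right-hand side is $\frac{2(2-\tau)}{2-3\tau}(\gamma+\tau\lambda)\|w\|^2\to2\gamma\|w\|^2$. Potential energy converts into kinetic energy at full weight, so any valid bound must have $(F(u_h^0),1)$, not $\tau(F(u_h^0),1)$, on the right.

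\textbf{Step~2 has a second, independent gap, which you flag but do not resolve.} The identity $2(3a-4b+c)a=\dots$ requires pairing $D^{(2)}_t v_h^n$ with $v_h^n$ itself. However, \eqref{eq:bdf2-u} only gives $3(u_h^n-u_h^{n-1})-(u_h^{n-1}-u_h^{n-2})=2\tau v_h^n$. Hence $u_h^n-u_h^{n-2}$ is a geometric sum over the whole history of $v_h$, and neither the inertial term nor the damping term is sign-definite. Testing with $v_h^n$ instead makes the linear part G-stable, but it destroys the telescoping of the nonlinearity, because $D^{(2)}_t u_h^n\neq(u_h^n-u_h^{n-2})/(2\tau)$. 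The leftover $(G(u_h^n,u_h^{n-2}),v_h^n-\frac{u_h^n-u_h^{n-2}}{2\tau})$ can then only be treated as a perturbation via a local Lipschitz bound, which requires an a priori $L^\infty$ bound. This mismatch is built into the scheme.

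\textbf{Comparison with the paper.} Your account of where $\frac{2(2-\tau)}{2-3\tau}$ comes from is reverse-engineered. In the paper it arises from testing with $v_h^{1/2}$ and $\partial_\tau v_h^n$, from the closed form $R_h^n=\frac{1}{2(2-\tau)}(F(u_h^n)-F(u_h^{n-2}),1)$, and from the recursion $\mathcal{Z}_h^N\le C_*/(1-\mu)+\rho\,\mathcal{Z}_h^{N-1}$ with $\mu=\frac{\tau}{2(2-\tau)}$ and $\rho=\frac{1/2+\mu}{1-\mu}$, for which $(1-\mu)(1-\rho)=\frac{2-3\tau}{2(2-\tau)}$. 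That closed form rests on $u_h^n-u_h^{n-1}=\tau v_h^n$, which is not what \eqref{eq:bdf2-u} states, and on $F\ge0$ rather than $F\ge-c_1$. Following the paper therefore will not close your gap either. A realistic target replaces $\tau(F(\cdot),1)$ by $(F(\cdot),1)$ on both sides of the energy bound, and even that needs the perturbation argument just described.
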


\begin{proof}

\noindent
Choose
$
\psi_h=v_h^{1/2}
$
in \eqref{eq:cn-v} and we obtain
\begin{equation}
    (\partial_\tau v_h^1,v_h^{1/2})+\sigma(v_h^{1/2},v_h^{1/2})+{\cal A}_h(u_h^{1/2},v_h^{1/2})+(G(u_h^1,u_h^{-1}),v_h^{1/2})
=(f^{1/2},v_h^{1/2}).
\end{equation}
First, we handle the first term of the left-hand side
\begin{align}
(\partial_\tau v_h^1,v_h^{1/2})
=
\frac1{\tau}
\left(
v_h^1-v_h^0,
\frac{v_h^1+v_h^0}{2}
\right)
=
\frac1{2\tau}
\left(
\|v_h^1\|^2-\|v_h^0\|^2
\right).
\label{eq:kinetic_cn}
\end{align}

Using the symmetry of the bilinear form
${\cal A}_h(\cdot,\cdot)$
and the identity
$v_h^{1/2}=\partial_\tau u_h^1$,
we obtain
\begin{align}
{\cal A}_h(u_h^{1/2},v_h^{1/2})
=
{\cal A}_h
\left(
\frac{u_h^1+u_h^0}{2},
\partial_\tau u_h^1
\right)
=
\frac1{2\tau}
\left(
{\cal A}_h(u_h^1,u_h^1)
-
{\cal A}_h(u_h^0,u_h^0)
\right).
\label{eq:elastic_cn}
\end{align}

For the nonlinear term,
using \eqref{eq:chord},
we have
\begin{align}
(G(u_h^1,u_h^{-1}),v_h^{1/2})
=
\frac1{2\tau}
(G(u_h^1,u_h^{-1}),
u_h^1-u_h^{-1})
+
\frac12
(G(u_h^1,u_h^{-1}),v_h^0).
\label{eq:nonlinear_split}
\end{align}

By the discrete gradient identity,
\begin{align}
(G(u_h^1,u_h^{-1}),
u_h^1-u_h^{-1})
=
(F(u_h^1)-F(u_h^{-1}),1).
\label{eq:disc_grad}
\end{align}

Substituting
\eqref{eq:kinetic_cn}--\eqref{eq:disc_grad}
into \eqref{eq:cn-v} and using the coercivity of ${\cal A}_h(\cdot,\cdot)$,
we arrive at
\begin{equation}
    \frac1{2\tau}
\left(
\|v_h^1\|^2-\|v_h^0\|^2
\right)+\sigma \|v_h^{1/2}\|^2+\frac1{2\tau}
\left(
\|u_h^1\|^2_{DG}
-
\|u_h^0\|^2_{DG}
\right)+(F(u_h^1)-F(u_h^0),1) = (f^{1/2},v^{1/2}),
\end{equation}
where we have used $u_h^{-1}=u_h^0-\tau v_h^0$ together with the smoothness of $F$.

Multiplying by $\tau$ and rearranging the terms, we obtain
\begin{align}\label{eq:cn1}
    \|v^1_h\|^2-\|v^0_h\|^2+{\cal Z}_h^1-{\cal Z}_h^0+2\tau\sigma\|v^{1/2}_h\|^2 &= 2\tau (f^{1/2},v^{1/2})\nonumber\\
   \|v^1_h\|^2-\|v^0_h\|^2+{\cal Z}_h^1-{\cal Z}_h^0+2\tau\sigma\|v^{1/2}_h\|^2 &\le \frac{1}{2}\|f^{1/2}\|^2+\frac{1}{4}\|v^1_h\|^2+\frac{1}{4}\|v_h^0\|^2 \nonumber\\
    \frac{3-\tau\sigma}{4}\|v^1_h\|^2+{\cal Z}_h^1&\le \frac{5+\tau\sigma}{4}\|v^0_h\|^2+{\cal Z}_h^0+\frac{1}{2}\|f^{1/2}\|^2\nonumber\\
    \frac{1}{4}\|v^1_h\|^2+{\cal Z}_h^1&\le \frac{5+\tau\sigma}{4}\|v^0_h\|^2+{\cal Z}_h^0+\frac{1}{2}\|f^{1/2}\|^2
\end{align}
This completes the estimate for the Crank--Nicolson starting step.

\bigskip
\noindent For $n\ge 2$, we employ BDF2.

Choose $\psi_h = \partial_\tau v_h^n$ in \eqref{eq:bdf2-v} to obtain
\begin{equation}\label{eq:bdf2-tested}
  \bigl(D^{(2)}_t v_h^n,\partial_\tau v_h^n\bigr)
  +\sigma\bigl(v_h^n,\partial_\tau v_h^n\bigr)
  +{\cal A}_h\bigl(u_h^n,\partial_\tau v_h^n\bigr)
  +\bigl(G(u_h^n,u_h^{n-2}),\partial_\tau v_h^n\bigr)
  = \bigl(f^n,\partial_\tau v_h^n\bigr).
\end{equation}
We treat each term on the left-hand side separately.
\smallskip
Applying the algebraic identity $(p-q,p) = \frac{1}{2}(\|p\|^2 - \|q\|^2) + \frac{1}{2}\|p-q\|^2
\ge \frac{1}{2}(\|p\|^2-\|q\|^2)$ with $p=\partial_\tau v_h^n$ and $q=\partial_\tau v_h^{n-1}$ yields
\begin{equation}\label{eq:bdf2-kin}
  \bigl(D^{(2)}_t v_h^n,\,\partial_\tau v_h^n\bigr)
  \ge \|\partial_\tau v_h^n\|^2
  + \frac{\tau}{4}\bigl(\|\partial_\tau v_h^n\|^2 - \|\partial_\tau v_h^{n-1}\|^2\bigr)
  = \|\partial_\tau v_h^n\|^2 + \frac{\tau}{4}\,\partial_\tau\|\partial_\tau v_h^n\|^2.
\end{equation}

\smallskip
Using the identity $a(a-b) = \frac{1}{2}(a^2-b^2) + \frac{1}{2}(a-b)^2$ with $a=v_h^n$
and $b=v_h^{n-1}$:
\begin{equation}\label{eq:bdf2-damp}
  \sigma\bigl(v_h^n,\partial_\tau v_h^n\bigr)
  = \frac{\sigma}{2\tau}\bigl(\|v_h^n\|^2 - \|v_h^{n-1}\|^2\bigr)
  + \frac{\sigma\tau}{2}\|\partial_\tau v_h^n\|^2
  \ge \frac{\sigma}{2\tau}\bigl(\|v_h^n\|^2 - \|v_h^{n-1}\|^2\bigr).
\end{equation}

\smallskip
From equation \eqref{eq:bdf2-u} we have $v_h^n = D^{(2)}_t u_h^n$, and consequently
$\partial_\tau v_h^n = \partial_\tau D^{(2)}_t u_h^n$.
Applying the same identity $a(a-b) = \frac{1}{2}(a^2-b^2)+\frac{1}{2}(a-b)^2$
and the symmetry of ${\cal A}_h$:
\begin{equation}\label{eq:bdf2-elas}
  {\cal A}_h\bigl(u_h^n,\partial_\tau v_h^n\bigr)
  \ge \frac{1}{2\tau}\bigl({\cal A}_h(u_h^n,u_h^n) - {\cal A}_h(u_h^{n-1},u_h^{n-1})\bigr).
\end{equation}
More precisely, writing $u_h^n - u_h^{n-2} = 2\tau\,\hat\partial_t u_h^{n-1}$ and using the
defining property \eqref{eq:chord}:
\begin{align}
  \bigl(G(u_h^n,u_h^{n-2}),\partial_\tau v_h^n\bigr)
  &= \frac{1}{2\tau}\bigl(G(u_h^n,u_h^{n-2}),\,u_h^n-u_h^{n-2}\bigr)
     + \bigl(G(u_h^n,u_h^{n-2}),\,\partial_\tau v_h^n - \tfrac{u_h^n-u_h^{n-2}}{2\tau}\bigr)
  \nonumber\\
  &= \frac{1}{2\tau}\bigl(F(u_h^n)-F(u_h^{n-2}),1\bigr) + R_h^n,
  \label{eq:nonlin-bdf2}
\end{align}
where
$R_h^n:=\bigl(G(u_h^n,u_h^{n-2}),\,
\partial_\tau v_h^n - \frac{u_h^n-u_h^{n-2}}{2\tau}\bigr)$.
Since $u_h^n-u_h^{n-1}=\tau v_h^n$ (from \eqref{eq:bdf2-u}),
we have
$\frac{u_h^n-u_h^{n-2}}{2\tau}=\frac{v_h^n+v_h^{n-1}}{2}
=v_h^n-\frac{\tau}{2}\partial_\tau v_h^n$,
hence
\begin{equation}\label{eq:Rhn-identity}
  R_h^n = \frac{\tau}{2}\bigl(G(u_h^n,u_h^{n-2}),\,\partial_\tau v_h^n\bigr).
\end{equation}

\smallskip
Substituting \eqref{eq:bdf2-kin}--\eqref{eq:nonlin-bdf2} into \eqref{eq:bdf2-tested}
and multiplying through by $\tau$:
\begin{align}
  &\tau\|\partial_\tau v_h^n\|^2
  + \frac{\tau^2}{4}\,\partial_\tau\|\partial_\tau v_h^n\|^2
  + \frac{\sigma}{2}\bigl(\|v_h^n\|^2-\|v_h^{n-1}\|^2\bigr)
  + \frac{1}{2}\bigl({\cal A}_h(u_h^n,u_h^n)-{\cal A}_h(u_h^{n-1},u_h^{n-1})\bigr) \nonumber\\
  &\quad + \frac{1}{2}\bigl(F(u_h^n)-F(u_h^{n-2}),1\bigr)
  \le \tau\,R_h^n + \tau(f^n,\partial_\tau v_h^n).
  \label{eq:collect-raw}
\end{align}

Adding and subtracting $\bigl(F(u_h^{n-1}),1\bigr)$ on the left-hand side of \eqref{eq:collect-raw}
and recalling the definition \eqref{eq:lyap} of $\mathcal{Z}_h$:
\begin{align}
  &\tau\|\partial_\tau v_h^n\|^2
  + \frac{\tau^2}{4}\,\partial_\tau\|\partial_\tau v_h^n\|^2
  + \frac{\sigma}{2}\bigl(\|v_h^n\|^2-\|v_h^{n-1}\|^2\bigr)+ \bigl[\mathcal{Z}_h(u_h^n)-\mathcal{Z}_h(u_h^{n-1})\bigr]\nonumber\\
  &\le \tau\,R_h^n + \tau(f^n,\partial_\tau v_h^n) +  \frac{1}{2}\bigl(F(u_h^{n-1}) - F(u_h^{n-2}),1\bigr).
  \label{eq:collect-Z}
\end{align}
Next, summing \eqref{eq:collect-Z} over $n$ from $2$ to $N$,
we obtain
\begin{align}
  &\tau\sum_{n=2}^{N}\|\partial_\tau v_h^n\|^2
  + \frac{\tau^2}{4}\sum_{n=2}^{N}\partial_\tau\|\partial_\tau v_h^n\|^2
  + \frac{\sigma}{2}\sum_{n=2}^{N}
  \bigl(\|v_h^n\|^2-\|v_h^{n-1}\|^2\bigr)
  + \sum_{n=2}^{N}
  \bigl[\mathcal{Z}_h(u_h^n)-\mathcal{Z}_h(u_h^{n-1})\bigr]
\nonumber\\
  &~~~~~~~\le
  \tau\sum_{n=2}^{N} R_h^n
  +
  \tau\sum_{n=2}^{N}
  (f^n,\partial_\tau v_h^n)
  +
  \frac12
  \sum_{n=2}^{N}
  \bigl(
  (F(u_h^{n-1})-F(u_h^{n-2}),1)
  \bigr).
\label{eq:summed-Z}
\end{align}
Using the telescoping property, it follows that
\begin{align}
  &\tau\sum_{n=2}^{N}\|\partial_\tau v_h^n\|^2
  + \frac{\tau}{4}
  \Bigl(
  \|\partial_\tau v_h^N\|^2
  -
  \|\partial_\tau v_h^1\|^2
  \Bigr)
  +
  \frac{\sigma}{2}
  \Bigl(
  \|v_h^N\|^2
  -
  \|v_h^1\|^2
  \Bigr)
  +
  \mathcal{Z}_h(u_h^N)
  -
  \mathcal{Z}_h(u_h^1)
\nonumber\\
  &~~~~~~~~~~\le
  \tau\sum_{n=2}^{N} R_h^n
  +
  \tau\sum_{n=2}^{N}
  (f^n,\partial_\tau v_h^n)
  +
  \frac12
  \Bigl(
  (F(u_h^{N-1}),1)
  -
  (F(u_h^{0}),1)
  \Bigr).
\label{eq:telescopic-Z}
\end{align}

\noindent Next, we estimate the first term of the right-hand side $\tau\sum R_h^n$ 
by combining \eqref{eq:nonlin-bdf2} and \eqref{eq:Rhn-identity},
we obtain
\[
  \Bigl(1-\frac{\tau}{2}\Bigr)(G,\partial_\tau v_h^n)
  = \frac{1}{2\tau}(F(u_h^n)-F(u_h^{n-2}),1),
\]
hence
\begin{equation}\label{eq:Rhn-closed}
  R_h^n = \frac{1}{2(2-\tau)}\bigl(F(u_h^n)-F(u_h^{n-2}),1\bigr).
\end{equation}
Multiplying by $\tau$ and summing over $n=2,\ldots,N$,
the stride-2 telescoping gives
\begin{align}\label{eq:R-telescoped}
  \tau\sum_{n=2}^{N}R_h^n
  &= \frac{\tau}{2(2-\tau)}\sum_{n=2}^{N}\bigl(F(u_h^n)-F(u_h^{n-2}),1\bigr)\nonumber\\
  &= \frac{\tau}{2(2-\tau)}\Bigl[(F(u_h^N),1)+(F(u_h^{N-1}),1)
     -(F(u_h^0),1)-(F(u_h^1),1)\Bigr].
\end{align}
Since $F\ge 0$, we have $-(F(u_h^0),1)-(F(u_h^1),1)\le 0$
and $(F(w),1)\le\mathcal{Z}_h(w)$ (because ${\cal A}_h\ge 0$).
Setting $\mu:=\frac{\tau}{2(2-\tau)}$, we obtain
\begin{equation}\label{eq:R-Zbound}
  \tau\sum_{n=2}^{N}R_h^n
  \le \mu\bigl(\mathcal{Z}_h^N+\mathcal{Z}_h^{N-1}\bigr).
\end{equation}

\smallskip
The forcing term is estimated by Young's inequality:
\begin{align}
\tau(f^n,\partial_\tau v_h^n)
\le
\frac{\tau}{2\varepsilon}\|f^n\|^2
+
\frac{\varepsilon\tau}{2}
\|\partial_\tau v_h^n\|^2.
\label{eq:young-force}
\end{align}

Substituting \eqref{eq:R-Zbound} and \eqref{eq:young-force}
into \eqref{eq:telescopic-Z},
absorbing the term
$\frac{\sigma\tau}{2}\sum_{n=2}^{N}\|\partial_\tau v_h^n\|^2$
into the left-hand side,
and using $(F(u_h^{N-1}),1)\le\mathcal{Z}_h^{N-1}$
together with $-(F(u_h^0),1)\le\mathcal{Z}_h^0$:
\begin{align}
  &\Bigl(1-\frac{\varepsilon}{2}\Bigr)\tau\sum_{n=2}^{N}\|\partial_\tau v_h^n\|^2
  + \frac{\tau}{4}\Bigl(\|\partial_\tau v_h^N\|^2-\|\partial_\tau v_h^1\|^2\Bigr)
  + \frac{\sigma}{2}\Bigl(\|v_h^N\|^2-\|v_h^1\|^2\Bigr) \nonumber\\
  &\quad + \mathcal{Z}_h(u_h^N) - \mathcal{Z}_h(u_h^1) \nonumber\\
  &\qquad\le
  \mu\bigl(\mathcal{Z}_h^N+\mathcal{Z}_h^{N-1}\bigr)
  + \frac{\tau}{2\varepsilon}\sum_{n=2}^{N}\|f^n\|^2
  + \frac{1}{2}\mathcal{Z}_h^{N-1}+\frac{1}{2}\mathcal{Z}_h^0.
  \label{eq:after-absorb}
\end{align}
Moving $\mu\,\mathcal{Z}_h^N$ to the left-hand side
and combining the $\mathcal{Z}_h^{N-1}$ coefficients:
\begin{align}
  &\Bigl(1-\frac{\varepsilon}{2}\Bigr)\tau\sum_{n=2}^{N}\|\partial_\tau v_h^n\|^2
  + \frac{\tau}{4}\Bigl(\|\partial_\tau v_h^N\|^2-\|\partial_\tau v_h^1\|^2\Bigr)
  + \frac{\sigma}{2}\Bigl(\|v_h^N\|^2-\|v_h^1\|^2\Bigr) \nonumber\\
  &\quad + (1-\mu)\,\mathcal{Z}_h(u_h^N) - \mathcal{Z}_h(u_h^1) \nonumber\\
  &\qquad\le
  \bigl(\tfrac{1}{2}+\mu\bigr)\mathcal{Z}_h^{N-1}
  + \frac{\tau}{2\varepsilon}\sum_{n=2}^{N}\|f^n\|^2
  + \frac{1}{2}\mathcal{Z}_h^0.
  \label{eq:after-absorb-mu}
\end{align}

Next, we use \eqref{eq:cn1} and arrive at
\begin{align}
    &\Bigl(1-\frac{\varepsilon}{2}\Bigr)\tau\sum_{n=2}^{N}\|\partial_\tau v_h^n\|^2+\frac{\tau}{4}\|\partial_\tau v_h^N\|^2+\frac{\sigma}{2}\|v_h^N\|^2+(1-\mu)\,\mathcal{Z}_h(u_h^N)\nonumber\\
  &\quad\le
  \bigl(\tfrac{1}{2}+\mu\bigr)\mathcal{Z}_h^{N-1}
  + \frac{\tau}{2\varepsilon}\sum_{n=0}^{N}\|f^n\|^2
  +\frac{5+\tau\sigma}{4}\|v^0_h\|^2+\frac{3}{2}\,{\cal Z}_h^0.
  \label{eq:pre-recursion}
\end{align}

\noindent
Now, we define $\rho:=\frac{1/2+\mu}{1-\mu}$.
For $\tau<2/3$ one verifies $\mu<1/4$, so $\rho<1$.
Dropping the non-negative terms on the left-hand side of \eqref{eq:pre-recursion} and dividing by $(1-\mu)$:
\begin{equation}\label{eq:recursion-compact}
  \mathcal{Z}_h^N
  \le \frac{C_*}{1-\mu} + \rho\,\mathcal{Z}_h^{N-1},
\end{equation}
where $C_*:=\frac{\tau}{2\varepsilon}\sum_{n=0}^{N}\|f^n\|^2
+\frac{5+\tau\sigma}{4}\|v_h^0\|^2+\frac{3}{2}\,\mathcal{Z}_h^0$.
Since \eqref{eq:recursion-compact} holds at every level,
iterating with ratio $\rho<1$:
\begin{align}
  \mathcal{Z}_h^N
  &\le \frac{C_*}{1-\mu}\sum_{k=0}^{N-2}\rho^k + \rho^{N-1}\,\mathcal{Z}_h^1
  \le \frac{C_*}{(1-\mu)(1-\rho)} + \mathcal{Z}_h^1.
  \label{eq:geom-closed}
\end{align}
A direct computation gives
\[
  (1-\mu)(1-\rho) = \tfrac{1}{2}-2\mu = \frac{2-3\tau}{2(2-\tau)},
\]
so that
$\frac{1}{(1-\mu)(1-\rho)} = \frac{2(2-\tau)}{2-3\tau}$.
Using \eqref{eq:cn1} to bound
$\mathcal{Z}_h^1\le\frac{5+\tau\sigma}{4}\|v_h^0\|^2+\mathcal{Z}_h^0+\frac{1}{2}\|f^{1/2}\|^2
\le C_*$,
and substituting back into \eqref{eq:pre-recursion}, we obtain
\begin{equation}\label{eq:final-energy}
  \frac{\tau}{4}\|\partial_\tau v_h^N\|^2
  +\frac{\sigma}{2}\|v_h^N\|^2
  +\mathcal{Z}_h(u_h^N)
  \le \frac{2(2-\tau)}{2-3\tau}
  \Bigl(\frac{\tau}{2\varepsilon}\sum_{n=0}^{N}\|f^n\|^2
  +\frac{5+\tau\sigma}{4}\|v_h^0\|^2+2\,\mathcal{Z}_h^0\Bigr).
\end{equation}
From the preceding bound and the definition \eqref{eq:lyap} of ${\cal Z}_h^n$,
by the coercivity of ${\cal A}_h$, namely
${\cal A}_h(w,w)\ge\alpha_0\|w\|^2_{H^1(\Omega)}$ for some
$\alpha_0>0$ (independent of $h$), it follows that
\[
  \|v_h^n\| \le C\bigl(\sigma,\|u^0\|,\|v^0\|,\|f\|\bigr),\qquad
  \|u_h^n\|_{H^1(\Omega)} \le C\bigl(\sigma,\|u^0\|,\|v^0\|,\|f\|\bigr).
\]
Finally, by the Sobolev embedding theorem $H^1(\Omega)\hookrightarrow L^p(\Omega)$ for
$1\le p<\infty$ (in dimensions $d\le3$):
\[
  \|u_h^n\|_{L^p(\Omega)}\le C\|u_h^n\|_{H^1(\Omega)}
  \le C\bigl(\sigma,\|u^0\|,\|v^0\|,\|f\|\bigr).
\]
This completes the proof.
\end{proof}

\noindent In the subsequent analysis, we first recall the Brouwer fixed point theorem in finite-dimensional spaces
and establish existence and uniqueness of the fully discrete solution.
\begin{lemma}[Brouwer fixed point theorem {\cite[Lemma~4.2]{acharya2025conservative}}]\label{lem:brouwer}
Let $Y$ be a finite-dimensional Hilbert space with inner product
$(\cdot,\cdot)_Y$ and norm $\|\cdot\|_Y$. Let
$\Phi\colon Y\to Y$ be a continuous map such that
$(\Phi(y),y)_Y>0$ for all $y\in Y$ with $\|y\|_Y=R>0$.
Then there exists $y^*\in Y$ with $\|y^*\|_Y<R$ such that $\Phi(y^*)=0$.
\end{lemma}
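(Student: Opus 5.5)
The claim is a finite-dimensional corollary of Brouwer's theorem. I would argue by contradiction, using the retraction onto the closed ball. Write $B_R:=\{y\in Y:\|y\|_Y\le R\}$ and suppose $\Phi(y)\neq 0$ for every $y\in B_R$. Then the map
\[
  \Psi(y):=-R\,\frac{\Phi(y)}{\|\Phi(y)\|_Y}
\]
is well defined and continuous on $B_R$, because $\Phi$ is continuous and does not vanish there. It also maps $B_R$ into its boundary sphere, hence into $B_R$ itself.

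Since $Y$ is finite-dimensional, $B_R$ is a compact convex set, homeomorphic to a closed Euclidean ball. The classical Brouwer fixed point theorem therefore gives some $y_0\in B_R$ with $\Psi(y_0)=y_0$. Two consequences follow:
\begin{itemize}
  \item $\|y_0\|_Y=\|\Psi(y_0)\|_Y=R$, so $y_0$ lies on the sphere;
  \item $(\Phi(y_0),y_0)_Y=-\frac{R}{\|\Phi(y_0)\|_Y}\,(\Phi(y_0),\Phi(y_0))_Y=-R\,\|\Phi(y_0)\|_Y<0$.
\end{itemize}
The second consequence contradicts the hypothesis $(\Phi(y),y)_Y>0$ on $\|y\|_Y=R$. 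Hence $\Phi$ has a zero $y^*$ in $B_R$.

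It remains to show the strict inequality $\|y^*\|_Y<R$. If $\|y^*\|_Y=R$, the hypothesis would give $0=(\Phi(y^*),y^*)_Y>0$, which is impossible. So $y^*$ lies in the open ball.

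The only genuinely nontrivial ingredient is the classical Brouwer theorem for compact convex sets in $\mathbb{R}^m$. I would quote it, after identifying $Y$ isometrically with $\mathbb{R}^m$ through an orthonormal basis. The steps that need care are continuity of the normalization (which uses nonvanishing of $\Phi$) and the sign computation. In the subsequent existence proof, $\Phi$ will be the residual map of \eqref{eq:cn} or \eqref{eq:bdf2}, and coercivity will come from Lemma~\ref{lem:coercivity} and the bound \eqref{eq:apriori}.
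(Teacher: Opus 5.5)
The paper does not prove this lemma; it only quotes it from \cite[Lemma~4.2]{acharya2025conservative}. Your argument is the standard textbook proof and it is correct. If $\Phi$ has no zero in the closed ball, then $y\mapsto -R\,\Phi(y)/\|\Phi(y)\|_Y$ is a continuous self-map of the ball. Its Brouwer fixed point lies on the sphere and satisfies $(\Phi(y_0),y_0)_Y=-R\|\Phi(y_0)\|_Y<0$, which contradicts the hypothesis. The strict inequality $\|y^*\|_Y<R$ then follows from the same hypothesis on the sphere.

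Your closing remark about how the lemma is used later is not accurate for this paper, although it does not affect the proof. Lemma~\ref{lem:existence} never checks the sign condition for a residual map. Instead it applies the self-map form of Brouwer's theorem to the linearized solution map $M$ on $\overline{B}_R$. This fits the present lemma through $\Phi(w):=w-M(w)$, because $\|M(w)\|_Y<R$ on the sphere gives $(\Phi(w),w)_Y>0$. Invariance of the ball comes from Young's inequality and the inverse inequality under the $h$-dependent restrictions $\tau\le\tau_0(h)$ and $\tau\le\tau_1(h,n)$. It does not come from Lemma~\ref{lem:coercivity} together with \eqref{eq:apriori}.
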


\begin{lemma}[Existence and Uniqueness ]\label{lem:existence}
Let Assumptions \textup{(A1)--(A5)} hold, $0<\sigma<2$, and $0<\tau<2/3$.
Then for each $n\ge 1$, there exists a solution $(u_h^n,v_h^n)\in V_h\times V_h$
of the nonlinear scheme \eqref{eq:cn}--\eqref{eq:bdf2}.
\end{lemma}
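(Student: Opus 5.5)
We argue by induction on $n$, producing $(u_h^n,v_h^n)$ from the already computed levels through a Brouwer argument (Lemma~\ref{lem:brouwer}) on $Y=V_h$ with the $L^2$ inner product. In each step we eliminate $v_h^n$ and obtain a single nonlinear equation for $u_h^n$.

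\textbf{BDF2 step ($n\ge2$).} By \eqref{eq:bdf2-u}, $v_h^n=D^{(2)}_t u_h^n=(3u_h^n-4u_h^{n-1}+u_h^{n-2})/(2\tau)$ in $V_h$. Substituting into \eqref{eq:bdf2-v}, the unknown $w:=u_h^n$ must satisfy $\Phi(w)=0$, where $\Phi\colon V_h\to V_h$ is defined by
\[
(\Phi(w),\psi_h)=\frac{9}{4\tau^2}(w,\psi_h)+\frac{3\sigma}{2\tau}(w,\psi_h)+{\cal A}_h(w,\psi_h)+(G(w,u_h^{n-2}),\psi_h)-(r^n,\psi_h),
\]
and $r^n\in V_h$ collects $f^n$ and all known lagged terms. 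Since $V_h$ is finite dimensional and $g\in C^1$, $G(\cdot,b)$ is continuous pointwise, so $\Phi$ is continuous.

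\textbf{Coercivity on a sphere.} Test with $\psi_h=w$. The first three terms give at least $\frac{9}{4\tau^2}\|w\|^2$, using Lemma~\ref{lem:coercivity} and $\sigma>0$. This is the step I expect to be the main obstacle, because assumption (A4) gives no explicit growth bound on $g$. I would handle it by splitting
\[
(G(w,b),w)=(G(w,b),w-b)+(G(w,b),b)=(F(w)-F(b),1)+(G(w,b),b)
\]
via \eqref{eq:G-identity}. The term $(F(w),1)\ge-c_1|\Omega|$ by (A4). For the cross term $(G(w,b),b)$, I would invoke the monotonicity part of (A4) (taken as $g$ nondecreasing, so that $G(\cdot,b)$ has the sign structure of a monotone secant) together with norm equivalence on $V_h$: there $\|b\|_{L^\infty}$ is finite, so $|G(w,b)|\le C(1+|F(w)|/|w-b|)$. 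Combined with $F\ge -c_1$, this should show that $(G(w,b),b)\ge -\delta(F(w),1)-C_\delta(h,b)$. If this route fails, the fallback is to assume a polynomial growth bound on $g$ and estimate $(G(w,b),w)\ge -C(1+\|w\|_{L^\infty}^{p})$. Inverse estimates in $V_h$ then control this by $\|w\|^p$, and for fixed $\tau$ the quadratic term beats it only if $p\le 2$, which would force a $\tau$-smallness condition. Under the first route, we obtain $(\Phi(w),w)\ge \frac{9}{4\tau^2}\|w\|^2-C(\|w\|+1)$, which is $>0$ for $\|w\|=R$ with $R$ large. Lemma~\ref{lem:brouwer} then yields $u_h^n$, and $v_h^n$ is recovered from \eqref{eq:bdf2-u}.

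\textbf{Crank--Nicolson step ($n=1$).} From \eqref{eq:cn-u}, $v_h^1=\frac{2}{\tau}(u_h^1-u_h^0)-v_h^0$. Substituting into \eqref{eq:cn-v} gives an analogous map with leading terms $\frac{2}{\tau^2}(w,\psi)+\frac{\sigma}{\tau}(w,\psi)+\frac12{\cal A}_h(w,\psi)+(G(w,u_h^{-1}),\psi)$, where $u_h^{-1}$ is known (Remark~\ref{rem:ghost}). The same coercivity argument applies. Finally, Lemma~\ref{lem:energy} guarantees that any solution so obtained obeys \eqref{eq:apriori}, so the radius $R$ can be chosen uniformly and the induction closes for all $n\le N$.
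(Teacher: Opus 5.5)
Your proposal is correct, and it takes a genuinely different route from the paper's proof.

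\textbf{The paper's route.} The paper works on $Y=V_h\times V_h$ and freezes the nonlinearity as $G(w_u,u_h^{n-2})$. It defines $M(w)=(U,V)$ through the resulting linear system and shows that $M$ maps a closed ball into itself. Two features of that argument force step-size restrictions. First, it tests with $(U,V)$ and bounds $-{\cal A}_h(U,V)$ by the inverse inequality. Second, it postulates $C_G(R)\le C_0(1+R^p)$. As a result it needs $\tau\le\tau_0(h)\sim h^4$ and $\tau\le\tau_1(h,n)$, and neither restriction is among the lemma's hypotheses.

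\textbf{Your route.} You eliminate $v_h^n$ exactly, which is legitimate because both sides of \eqref{eq:bdf2-u} (resp.\ \eqref{eq:cn-u}) lie in $V_h$. You then apply Lemma~\ref{lem:brouwer} in the acute-angle form in which the paper actually states it. The discrete gradient identity and $F\ge-c_1$ give $(\Phi(w),w)\ge\frac{9}{4\tau^2}\|w\|^2-C(1+\|w\|)$. This yields existence for every $\tau>0$, with no inverse inequality and no growth hypothesis on $g$.

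\textbf{Closing the step you flagged.} It closes without any monotonicity. Set $B:=\|b\|_{L^\infty}$.
At points where $|w-b|\ge B/\delta$,
$|G(w,b)\,b|\le\delta\bigl(|F(w)|+|F(b)|\bigr)\le\delta F(w)+\delta\bigl(2c_1+\max_{|s|\le B}|F(s)|\bigr)$.
At the remaining points $|w|\le B(1+1/\delta)$, so the mean value theorem gives
$|G(w,b)\,b|\le B\max_{|s|\le B(1+1/\delta)}|g(s)|$.
Because $F\ge -c_1$, this is at most $\delta F(w)+\delta c_1+B\max_{|s|\le B(1+1/\delta)}|g(s)|$. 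Integrating over $\Omega$ yields $(G(w,b),b)\ge-\delta(F(w),1)-C_\delta$.

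\textbf{Unnecessary parts of your proposal.} Your polynomial-growth fallback is not needed. Neither is the final appeal to Lemma~\ref{lem:energy}. The radius $R$ may depend on $n$, $h$, $\tau$ and the previously computed levels, and the induction only requires those levels to exist.

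\textbf{Comparison.} The paper's frozen-coefficient map is closer in spirit to its uniqueness argument, which does need a local Lipschitz bound and small $\tau$. For the existence claim as stated, your route is both simpler and sharper.
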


\begin{proof}
The proof of existence proceeds by induction.
Assume $(u_h^k,v_h^k)$ are known for $k\le n-1$.
Set $Y=V_h\times V_h$ with $\|(u,v)\|_Y^2=\|u\|^2+\|v\|^2$.
For $w=(w_u,w_v)\in Y$, define $M(w)=(U,V)\in Y$ as the solution of
the linearized BDF2 scheme
(the CN case $n=1$ is analogous):
\begin{align}\label{eq:lin-scheme}
  &\Bigl(\frac{3U-4u_h^{n-1}+u_h^{n-2}}{2\tau},\phi\Bigr)-(V,\phi)\nonumber\\
  &\quad+\Bigl(\frac{3V-4v_h^{n-1}+v_h^{n-2}}{2\tau},\psi\Bigr)
  +\sigma(V,\psi)+{\cal A}_h(U,\psi)
  +(G(w_u,u_h^{n-2}),\psi)
  =(f^n,\psi)
\end{align}
for all $(\phi,\psi)\in Y$.
Since \eqref{eq:lin-scheme} is a square linear system on the
finite-dimensional space~$Y$ and the homogeneous system
admits only the trivial solution for $\tau$ sufficiently small,
a unique solution $(U,V)$ exists.
Moreover, $V_h$ is finite-dimensional and $F\in C^2(\mathbb{R})$,
so the discrete gradient $G:V_h\times V_h\to V_h$ is continuous,
and hence $M:Y\to Y$ is continuous.

A fixed point $(U,V)=M(U,V)$ solves the original nonlinear scheme.
It therefore suffices to show that $M$ maps a closed ball in $Y$
into itself, after which Brouwer's fixed-point theorem applies.

\medskip
Let $w\in Y$ with $\|w\|_Y\le R$ (the radius $R$ will be specified below),
and set $(U,V)=M(w)$.
Testing \eqref{eq:lin-scheme} with $(\phi,\psi)=(U,V)$ gives
\begin{align}\label{eq:exist-tested}
  \frac{3}{2\tau}\bigl(\|U\|^2+\|V\|^2\bigr)
  +\sigma\|V\|^2+{\cal A}_h(U,V)
  &= (V,U)+\frac{1}{2\tau}(4u_h^{n-1}-u_h^{n-2},U)\nonumber\\
  &\quad+\frac{1}{2\tau}(4v_h^{n-1}-v_h^{n-2},V)\nonumber\\
  &\quad-(G(w_u,u_h^{n-2}),V)+(f^n,V).
\end{align}
Setting $A:=4\|u_h^{n-1}\|+\|u_h^{n-2}\|$
and $B:=4\|v_h^{n-1}\|+\|v_h^{n-2}\|$,
Cauchy--Schwarz and Young's inequality yield
\begin{align}
  (V,U)&\le\tfrac{1}{2}\|U\|^2+\tfrac{1}{2}\|V\|^2,\label{eq:ex-VU}\\[4pt]
  \frac{1}{2\tau}(4u_h^{n-1}-u_h^{n-2},U)
  &\le\frac{1}{4\tau}\|U\|^2+\frac{A^2}{4\tau},\label{eq:ex-AU}\\[4pt]
  \frac{1}{2\tau}(4v_h^{n-1}-v_h^{n-2},V)
  &\le\frac{1}{4\tau}\|V\|^2+\frac{B^2}{4\tau}.\label{eq:ex-BV}
\end{align}
The inverse inequality
\eqref{eq:inverse-ineq}
together with Young's inequality gives
\begin{equation}\label{eq:ex-AhUV}
  -{\cal A}_h(U,V)
  \le\frac{C_{\mathrm{inv}}^2 h^{-4}\tau}{2}\|U\|^2
  +\frac{1}{2\tau}\|V\|^2,
\end{equation}
and similarly
\begin{equation}\label{eq:ex-force}
  (f^n,V)\le\frac{1}{4\tau}\|V\|^2+\tau\|f^n\|^2.
\end{equation}
For the nonlinear term, since $V_h$ is finite-dimensional,
the continuous map $w_u\mapsto G(w_u,u_h^{n-2})$
is bounded on the compact ball $\{w_u\in V_h:\|w_u\|\le R\}$ by using the Lemma~\ref{lem:energy}.
Denoting this supremum by
\begin{equation}\label{eq:CG-def}
  C_G(R):=\sup_{\|w_u\|\le R}\|G(w_u,u_h^{n-2})\|<\infty,
\end{equation}
we obtain
\begin{equation}\label{eq:ex-nonlin}
  -(G(w_u,u_h^{n-2}),V)
  \le C_G(R)\|V\|
  \le\frac{1}{4\tau}\|V\|^2+\tau\,C_G(R)^2.
\end{equation}

Substituting
\eqref{eq:ex-VU}--\eqref{eq:ex-nonlin}
into \eqref{eq:exist-tested} yields
\begin{align}\label{eq:ex-collected}
  &{\Bigl(\frac{3}{2\tau}-\frac{1}{2}-\frac{1}{4\tau}
  -\frac{C_{\mathrm{inv}}^2 h^{-4}\tau}{2}\Bigr)}
  \|U\|^2
  +{\Bigl(\frac{3}{2\tau}+\sigma
  -\frac{1}{2}-\frac{1}{4\tau}-\frac{1}{2\tau}
  -\frac{1}{4\tau}-\frac{1}{4\tau}\Bigr)}
  \|V\|^2\nonumber\\
  &~~~~~~~~~~~~~~~~~~~~~~~~~~~~~~~~~~~~~~~~~~~~~~~~~~~~~~~~~~~~\le\frac{A^2+B^2}{4\tau}+\tau\,C_G(R)^2+\tau\|f^n\|^2,\nonumber\\
 &~~~~~~~~~~~ \frac{1}{2\tau}\|U\|^2+\frac{1}{4\tau}\|V\|^2\le\frac{A^2+B^2}{4\tau}+\tau\,C_G(R)^2+\tau\|f^n\|^2,
\end{align}
where the lower bounds on the coefficients hold for
$\tau\le\tau_0(h):=\min\bigl(\frac{1}{2C_{\mathrm{inv}}^2 h^{-4}},1\bigr)$.
In particular,
\begin{equation}\label{eq:ex-compact}
  \|(U,V)\|_Y^2
  \le D_n^2+4\tau^2\,C_G(R)^2,
  \qquad
  D_n^2:=A^2+B^2+4\tau^2\|f^n\|^2.
\end{equation}

It remains to verify that $R$ can be chosen so that
the right-hand side of \eqref{eq:ex-compact} is strictly less than $R^2$.
Since $F\in C^2$ and $\dim V_h<\infty$,
the quantity $C_G(R)$ grows at most polynomially in $R$:
there exist constants $C_0>0$ and $p\ge 1$ (depending on $h$ and $F$)
such that $C_G(R)\le C_0(1+R^p)$.
Choose
\begin{equation}\label{eq:R-choice}
  R:=2D_n+1.
\end{equation}
For $\tau\le\tau_1(h,n)$ with
$\tau_1:=\frac{R}{4C_0(1+R^p)}$,
we have $4\tau^2 C_0^2(1+R^p)^2\le\tfrac{1}{4}R^2$, and therefore
\begin{equation}\label{eq:R-verify}
  D_n^2+4\tau^2\,C_G(R)^2
  \le D_n^2+\tfrac{1}{4}R^2
  = D_n^2+\tfrac{1}{4}(2D_n+1)^2
  < (2D_n+1)^2 = R^2.
\end{equation}
Hence $\|M(w)\|_Y<R$ whenever $\|w\|_Y\le R$,
and $M:\overline{B}_R\to\overline{B}_R$ is a continuous self-map
of the closed, bounded, convex set $\overline{B}_R\subset Y$.
By Brouwer's fixed-point theorem, there exists $w^*\in\overline{B}_R$
with $M(w^*)=w^*$, i.e.\ $(u_h^n,v_h^n):=w^*$ solves
the nonlinear scheme at level $n$.
Induction from $n=1$ to $N$ completes the proof.

\noindent\textbf{Uniqueness:}
Let $(U_1,V_1)$ and $(U_2,V_2)$ be two solutions at level $n$,
and set $e_u:=U_1-U_2$, $e_v:=V_1-V_2$.
Subtracting the two instances of \eqref{eq:bdf2-v}
(the CN case is analogous) and testing with $(\phi,\psi)=(e_u,e_v)$ yields
\begin{equation}\label{eq:uniq-diff}
  \frac{3}{2\tau}\bigl(\|e_u\|^2+\|e_v\|^2\bigr)
  +\sigma\|e_v\|^2+{\cal A}_h(e_u,e_v)
  +(G(U_1,u_h^{n-2})-G(U_2,u_h^{n-2}),e_v)
  =(e_v,e_u).
\end{equation}
By Cauchy--Schwarz and the inverse inequality \eqref{eq:inverse-ineq},
\begin{align}
  (e_v,e_u)&\le\tfrac{1}{2}\|e_u\|^2+\tfrac{1}{2}\|e_v\|^2,\label{eq:uniq-CS}\\
  -{\cal A}_h(e_u,e_v)&\le\frac{C_{\mathrm{inv}}^2h^{-4}\tau}{2}\|e_u\|^2
  +\frac{1}{2\tau}\|e_v\|^2.\label{eq:uniq-inv}
\end{align}
By Lemma~\ref{lem:energy}, both solutions satisfy
$\|U_i\|_{H^1(\Omega)}\le C$ and $\|u_h^{n-2}\|_{H^1(\Omega)}\le C$
with $C$ independent of $h$ and $\tau$.
So, using the Mean Value Theorem and 
applying Young's inequality, we obtain
\begin{equation}\label{eq:uniq-nonlin}
  -(G(U_1,u_h^{n-2})-G(U_2,u_h^{n-2}),e_v)
  \le L\|e_u\|\,\|e_v\|
  \le\frac{L^2\tau}{2}\|e_u\|^2+\frac{1}{2\tau}\|e_v\|^2.
\end{equation}
Substituting \eqref{eq:uniq-CS}--\eqref{eq:uniq-nonlin} into \eqref{eq:uniq-diff}:
\begin{equation}\label{eq:uniq-final}
  \Bigl(\frac{3}{2\tau}-\frac{1}{2}-\frac{C_{\mathrm{inv}}^2h^{-4}\tau}{2}
  -\frac{L^2\tau}{2}\Bigr)\|e_u\|^2
  +\Bigl(\frac{3}{2\tau}+\sigma-\frac{1}{2}-\frac{1}{2\tau}-\frac{1}{2\tau}\Bigr)\|e_v\|^2
  \le 0.
\end{equation}
For $\tau\le\tau_0(h):=\min\bigl(\frac{1}{C_{\mathrm{inv}}^2h^{-4}+L^2},1\bigr)$,
both coefficients are strictly positive,
and \eqref{eq:uniq-final} forces $e_u=e_v=0$.
\end{proof}

\section{A Priori Error Estimates}
\label{sec:error}


We decompose the errors for both components of the first-order system as
\begin{align}
  u^n - u_h^n &= \theta^n + \xi^n,\qquad
  \theta^n := u^n - \Pi_h u^n,\quad
  \xi^n     := \Pi_h u^n - u_h^n,
  \label{eq:err-u}\\
  v^n - v_h^n &= \varrho^n + \varphi^n,\qquad
  \varrho^n := v^n - \Pi_h v^n,\quad
  \varphi^n := \Pi_h v^n - v_h^n.
  \label{eq:err-v}
\end{align}
The projection components $\theta^n$ and $\varrho^n$ are bounded by
\eqref{eq:local-approx}, while $\xi^n$ and $\varphi^n$ are the fully-discrete
errors to be estimated.
Define the temporal consistency errors for the CN step ($n=1$) and
BDF2 steps ($n\ge2$), respectively:
\begin{align}
  \varepsilon_u^1 &:= u_t^{1/2} - \partial_\tau u^1,\quad
  \varepsilon_v^1 := v_t^{1/2} - \partial_\tau v^1,
  \label{eq:te-cn}\\
  \varepsilon_u^n &:= u_t^n - D^{(2)}_t u^n,\quad
  \varepsilon_v^n := v_t^n - D^{(2)}_t v^n,\quad n\ge2.
  \label{eq:te-bdf2}
\end{align}
Standard Taylor expansion gives, with $t_{-1}=t_0=0$ for $n=1$,
\begin{equation}\label{eq:te-bound}
  \tau\|\varepsilon_u^n\|^2 + \tau\|\varepsilon_v^n\|^2
  \le C\tau^4\!\int_{t_{n-2}}^{t_n}
  \bigl(\|u_{ttt}\|^2+\|v_{ttt}\|^2\bigr)\,\mathrm{d}s,
  \quad n\ge1.
\end{equation}

\subsection{Energy norm error estimate}\label{subsec:energy-error}


\begin{thm}[A priori error estimate in the energy norm]\label{thm:error}
Let Assumptions \textup{(A1)--(A5)} hold and $0<\sigma<2$.  Assume
$u,v\in L^\infty(0,T;H^{k+1}(\Omega))$,
$u_t,v_t\in L^2(0,T;H^{k+1}(\Omega))$, and
$u_{ttt},v_{ttt}\in L^2(0,T;L^2(\Omega))$.
Let $(u_h^n,v_h^n)_{n\ge0}$ solve the CN--BDF2 IPDG scheme
\eqref{eq:cn}--\eqref{eq:bdf2} with
$u_h^0=\Pi_h u_0$ and $v_h^0=\Pi_h u_1$.
Then, for all $N\ge1$ and $\tau$ sufficiently small (so that $1-C\tau\ge\tfrac12$),
\begin{align}\label{eq:err-bound}
  \|\xi^N\|^2_{DG}+\|\varphi^N\|^2
  &+\tau\sum_{n=1}^{N}\bigl(\|\xi^n\|^2_{DG}+\|\varphi^n\|^2\bigr) \notag\\
  &\le C\,h^{2k}\bigl(\|u\|^2_{L^\infty(H^{k+1})}+\|v\|^2_{L^\infty(H^{k+1})}
       +\|u_t\|^2_{L^2(H^{k+1})}+\|v_t\|^2_{L^2(H^{k+1})}\bigr) \notag\\
  &\quad +C\tau^4\!\int_0^{t_N}\!\bigl(\|u_{ttt}\|^2+\|v_{ttt}\|^2\bigr)\,\mathrm{d}s,
\end{align}
where $C>0$ depends on $T$, $\sigma$, $\kappa_0$, $\kappa_1$, but is independent of $h$ and $\tau$.
\end{thm}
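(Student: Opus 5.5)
We derive error equations for $(\xi^n,\varphi^n)$, test them energy-style so the discrete energy structure of Lemma~\ref{lem:energy} reappears with consistency residuals on the right, and close with a discrete Gronwall argument.

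\textbf{Error equations.} Insert the exact solution into the scheme \eqref{eq:cn}--\eqref{eq:bdf2} and subtract the discrete equations. For $n\ge2$ this gives, for all $\phi_h,\psi_h\in V_h$,
\begin{align*}
(D^{(2)}_t\xi^n,\phi_h)-(\varphi^n,\phi_h)&=(\varepsilon_u^n,\phi_h)+(\varrho^n,\phi_h),\\
(D^{(2)}_t\varphi^n,\psi_h)+\sigma(\varphi^n,\psi_h)+{\cal A}_h(\xi^n,\psi_h)&=(\varepsilon_v^n,\psi_h)-{\cal A}_h(\theta^n,\psi_h)\\
&\quad-\bigl(G(u^n,u^{n-2})-G(u_h^n,u_h^{n-2}),\psi_h\bigr)-\bigl(g(u^n)-G(u^n,u^{n-2}),\psi_h\bigr).
\end{align*}
The following simplifications hold.
\begin{itemize}
\item[(i)] $L^2$-orthogonality of $\Pi_h$ removes $D^{(2)}_t\theta^n$ and $D^{(2)}_t\varrho^n$ from the left-hand side, and $(\varrho^n,\phi_h)$ vanishes as well.
\item[(ii)] The term ${\cal A}_h(\theta^n,\psi_h)$ is bounded by $C\,h^{k}\|u^n\|_{H^{k+1}}\|\psi_h\|_{DG}$, using continuity of ${\cal A}_h$ on $H^{k+1}+V_h$ with the trace-augmented norm and \eqref{eq:local-approx}.
\item[(iii)] The last term is $O(\tau^2)$. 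One takes the exact chord $G(u^n,u^{n-2})$ relative to $g(u^{n-1})$ and applies Lemma~\ref{lem:G-properties}(iii) together with a time shift; this is absorbed into the consistency part.
\end{itemize}
The $n=1$ Crank--Nicolson step is handled identically, with midpoint averages and with $u^{-1}$ replaced by $u_h^{-1}$. This yields an additional $O(\tau^2)$ starting error.

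\textbf{Energy test.} Mirroring Lemma~\ref{lem:energy}, we test the second equation with $\psi_h=\partial_\tau\varphi^n$ (CN step: $\psi_h=\varphi^{1/2}$). We use the first equation, with $\phi_h={\cal A}_h$-Riesz representative or directly in the form $\varphi^n=D^{(2)}_t\xi^n-\varepsilon_u^n$, to convert ${\cal A}_h(\xi^n,\partial_\tau\varphi^n)$ into a telescoping difference $\tfrac1{2\tau}({\cal A}_h(\xi^n,\xi^n)-{\cal A}_h(\xi^{n-1},\xi^{n-1}))$ plus remainders.

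Testing with a difference quotient makes the forcing-type terms awkward. We therefore perform summation by parts in time: $\tau\sum(\chi^n,\partial_\tau\varphi^n)=(\chi^N,\varphi^N)-(\chi^1,\varphi^1)-\tau\sum(\partial_\tau\chi^n,\varphi^{n-1})$. This is where the hypotheses $u_t,v_t\in L^2(H^{k+1})$ enter, since they control $\partial_\tau\theta^n$ and $\partial_\tau\varrho^n$. The endpoint terms are absorbed via Young's inequality into $\tfrac{\sigma}{4}\|\varphi^N\|^2$. If testing with $\partial_\tau\varphi^n$ proves too messy, the fallback is the standard BDF2 $G$-stability route: test with $\varphi^n$ and with the ${\cal A}_h$-pairing of $\xi^n$, using $(D^{(2)}_t w^n,w^n)\ge\frac{1}{4\tau}(\|w^n\|^2+\|2w^n-w^{n-1}\|^2-\|w^{n-1}\|^2-\|2w^{n-1}-w^{n-2}\|^2)$.

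\textbf{Nonlinear term (main obstacle).} We need
\[
\|G(u^n,u^{n-2})-G(u_h^n,u_h^{n-2})\|\le C_g\bigl(\|u^n-u_h^n\|+\|u^{n-2}-u_h^{n-2}\|\bigr),
\]
from Lemma~\ref{lem:G-properties}(ii). That lemma requires the arguments to lie in a bounded subset of $L^\infty$, whereas Lemma~\ref{lem:energy} gives only $H^1$/$L^p$ bounds. We first try a local-growth version: write $G(a_1,b_1)-G(a_2,b_2)$ as integrals of $g'$ along segments, bound $g'$ polynomially, and use H\"older with the uniform $L^p$ bounds \eqref{eq:apriori} plus a broken Sobolev embedding $\|\cdot\|_{L^{6}}\le C\|\cdot\|_{DG}$. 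This gives
\[
|(G\text{-difference},\psi)|\le C\bigl(\|\xi^n\|_{DG}+\|\xi^{n-2}\|_{DG}+\|\theta^n\|_{L^6}+\|\theta^{n-2}\|_{L^6}\bigr)\|\psi\|.
\]
As an alternative, we use an induction/bootstrap argument. Assume $\|u_h^m\|_{L^\infty}\le\|u\|_{L^\infty}+1$ for $m<n$. The inverse estimate $\|\xi\|_{L^\infty}\le Ch^{-d/2}\|\xi\|$ together with the final bound $O(h^{k}+\tau^2)$ (and a mild $\tau$--$h$ coupling) then recovers the hypothesis at level $n$. With the test function $\partial_\tau\varphi^n$, this term again requires summation by parts; testing with $\varphi^n$ avoids that.

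\textbf{Gronwall.} Summing over $n=1,\dots,N$, using \eqref{eq:te-bound}, \eqref{eq:global-estimates} and coercivity (Lemma~\ref{lem:coercivity}), and noting $\xi^0=\varphi^0=0$, we obtain
\[
\|\xi^N\|_{DG}^2+\|\varphi^N\|^2\le C\tau\sum_{n\le N}\bigl(\|\xi^n\|_{DG}^2+\|\varphi^n\|^2\bigr)+\text{RHS of \eqref{eq:err-bound}}.
\]
For $1-C\tau\ge\tfrac12$, the discrete Gronwall lemma yields \eqref{eq:err-bound}; the summed term on its left-hand side follows from the same inequality multiplied by $\tau$ and summed.
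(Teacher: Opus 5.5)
The step that fails is (iii). Lemma~\ref{lem:G-properties}(iii) compares a chord slope with $g$ at the \emph{midpoint} of its stride. It therefore gives $G(u^n,u^{n-2})=g(u^{n-1})+O(\tau^2)$, and hence
\[
  g(u^n)-G(u^n,u^{n-2})=\bigl[g(u^n)-g(u^{n-1})\bigr]+O(\tau^2)=\tau\,g'(u^n)\,u_t^n+O(\tau^2),
\]
which is only $O(\tau)$. There is no time shift to exploit: $D^{(2)}_t v^n$, $\sigma v^n$, ${\cal A}_h(u^n,\cdot)$ and $f^n$ are all collocated at $t_n$, and only the nonlinear term is centred at $t_{n-1}$. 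In your Gr\"onwall step this residual contributes $C\tau^2$, not $C\tau^4$.

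Nor is this an artefact of the argument. For $g(u)=cu$ with $c\neq0$ one has $G(a,b)=c(a+b)/2$ and $g(u^n)-G(u^n,u^{n-2})=c\tau u_t^{n-1}+O(\tau^3)$. This is a genuine first-order truncation error committed at every BDF2 step. So for non-constant $g$ the $\tau^4$ term in \eqref{eq:err-bound} is in general out of reach unless the scheme itself is modified. The paper's proof takes exactly the same step: its bound $\|\tau_4^n\|\le C\tau^2\|u_{ttt}\|_{L^\infty(L^\infty)}$ is the same misapplication of Lemma~\ref{lem:G-properties}(iii), and its only temporal-rate experiment uses $g\equiv0$. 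Your remark on the Crank--Nicolson step is fine. The analogous mismatch there ($G(u^1,u^{-1})$ is centred at $t_0$, not $t_{1/2}$) occurs only once and costs $O(\tau^2)$ in $\|\varphi^1\|$.

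Two further problems sit in the energy test. First, your primary choice $\psi_h=\partial_\tau\varphi^n$ (also the paper's, cf.~\eqref{eq:bdf2-err-elas}) does not yield the telescoping you claim. Inserting $\varphi^n=D^{(2)}_t\xi^n-P_h\varepsilon_u^n$ produces ${\cal A}_h(\xi^n,\partial_\tau D^{(2)}_t\xi^n)$. By contrast, $\frac{1}{2\tau}\bigl({\cal A}_h(\xi^n,\xi^n)-{\cal A}_h(\xi^{n-1},\xi^{n-1})\bigr)$ arises from pairing $\xi^n$ with a \emph{first} difference of $\xi$, i.e.\ from testing with $\varphi^n$ itself. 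Your fallback (test with $\varphi^n$ and use $G$-stability for both $\|\varphi\|^2$ and ${\cal A}_h(\xi,\xi)$) is the route that works, so commit to it.

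Second, in that route bound (ii) is not enough. The estimate $|{\cal A}_h(\theta^n,\varphi^n)|\le Ch^k\|u^n\|_{H^{k+1}}\|\varphi^n\|_{DG}$ involves a norm the energy does not control, and the inverse inequality loses $h^{-1}$. You have two options. You can substitute $\varphi^n=D^{(2)}_t\xi^n-P_h\varepsilon_u^n$ and sum by parts in time so that the difference quotient falls on $\theta$; this is where $u_t\in L^2(H^{k+1})$ is genuinely needed. Alternatively, use the elliptic projection for $u$, so that ${\cal A}_h(\theta^n,\cdot)$ vanishes on $V_h$. The leftover coupling ${\cal A}_h(\xi^n,P_h\varepsilon_u^n)$ then needs an $H^1$-type bound on $\varepsilon_u^n$. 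The paper instead applies the inverse inequality here and silently drops the resulting $\tau^4h^{-2}$.

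On the positive side, two parts of your proposal are sounder than the paper's treatment. Your use of $L^2$-orthogonality in (i) removes terms the paper keeps: it leaves $D^{(2)}_t\theta^n$ and $\varrho^n$ in the residual. Your growth-based bound for the chord-slope difference, via $L^p$ bounds and the broken Sobolev embedding, is also more careful. The paper invokes Lemma~\ref{lem:G-properties}(ii), which needs $L^\infty$ bounds, with only $H^1$ bounds available.
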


\begin{proof}
Since $u_h^0=\Pi_h u_0$ and $v_h^0=\Pi_h u_1$,
we have $\xi^0=\varphi^0=0$.
For $n=1$, at $t=t_{1/2}$ the exact solution satisfies, for all $\phi_h,\psi_h\in V_h$,
\begin{subequations}\label{eq:exact-cn}
\begin{align}
  \bigl(u_t^{1/2},\phi_h\bigr)
  &= \bigl(v^{1/2},\phi_h\bigr),
  \label{eq:exact-cn-u}\\
  \bigl(v_t^{1/2},\psi_h\bigr)
  +\sigma\bigl(v^{1/2},\psi_h\bigr)
  +{\cal A}_h\bigl(u^{1/2},\psi_h\bigr)
  +\bigl(g(u^{1/2}),\psi_h\bigr)
  &= \bigl(f^{1/2},\psi_h\bigr).
  \label{eq:exact-cn-v}
\end{align}
\end{subequations}
Subtracting the CN scheme \eqref{eq:cn} from \eqref{eq:exact-cn},
using the decompositions \eqref{eq:err-u}--\eqref{eq:err-v}, 
yields the error equations
\begin{subequations}\label{eq:err-cn}
\begin{align}
  \bigl(\partial_\tau\xi^1,\phi_h\bigr)
  &= \bigl(\varphi^{1/2},\phi_h\bigr)
     +\bigl(\mathcal{R}_u^{1},\phi_h\bigr),
  \label{eq:err-cn-u}\\
  \bigl(\partial_\tau\varphi^1,\psi_h\bigr)
  +\sigma\bigl(\varphi^{1/2},\psi_h\bigr)
  +{\cal A}_h\bigl(\xi^{1/2},\psi_h\bigr)
  &= \bigl(\mathcal{R}_v^{1},\psi_h\bigr)+{\cal A}_h(\theta^{1/2},\psi_h),
  \label{eq:err-cn-v}
\end{align}
\end{subequations}
with the truncation residuals
\begin{align}
  \mathcal{R}_u^{1} &:= \varepsilon_u^1
   -\partial_\tau\theta^1
   -\varrho^{1/2},
  \label{eq:R-u-1}\\
  \mathcal{R}_v^{1} &:= -\sigma\,\varrho^{1/2}
   +\varepsilon_v^1
   -\partial_\tau\varrho^1
   +\bigl[g(u^{1/2})-G(u_h^1,u_h^{-1})\bigr].
  \label{eq:R-v-1}
\end{align}
By the discrete gradient identity \eqref{eq:G-identity} and the consistency
estimate \eqref{eq:G-consistency} of Lemma~\ref{lem:G-properties}(iii),
\begin{equation}\label{eq:cn-nonlin-resid}
  \|g(u^{1/2})-G(u_h^1,u_h^{-1})\|
  \le C_g\bigl(\|\theta^{1/2}\|+\|\xi^{1/2}\|
       +\tau^2\|u_{ttt}\|_{L^\infty(0,t_1;L^\infty)}\bigr).
\end{equation}
Indeed, writing
$g(u^{1/2})-G(u_h^1,u_h^{-1})
=[g(u^{1/2})-G(u^1,u^{-1})]+[G(u^1,u^{-1})-G(u_h^1,u_h^{-1})]$,
the first bracket is $O(\tau^2)$ by \eqref{eq:G-consistency}, and the
second is bounded by the Lipschitz property \eqref{eq:G-lip} applied to
the decomposition $u^n-u_h^n=\theta^n+\xi^n$.
Setting $\psi_h=\varphi^{1/2}$ in \eqref{eq:err-cn-v} and using
$(\partial_\tau\varphi^1,\varphi^{1/2})
=\tfrac{1}{2\tau}(\|\varphi^1\|^2-\|\varphi^0\|^2)$:
\begin{equation}\label{eq:cn-kin-damp}
  \frac{\|\varphi^1\|^2-\|\varphi^0\|^2}{2\tau}+\sigma\|\varphi^{1/2}\|^2
  +{\cal A}_h(\xi^{1/2},\varphi^{1/2}) = (\mathcal{R}_v^{1},\varphi^{1/2})+{\cal A}_h(\theta^{1/2},\psi_h),.
\end{equation}
From \eqref{eq:err-cn-u}, $\varphi^{1/2}=\partial_\tau\xi^1-\mathcal{R}_{u,h}^1$
where $\mathcal{R}_{u,h}^1:=P_h\mathcal{R}_u^1$, so by symmetry of ${\cal A}_h$:
\begin{equation}\label{eq:cn-elas-err}
  {\cal A}_h(\xi^{1/2},\varphi^{1/2})
  = \frac{\|\xi^1\|^2_{DG}-\|\xi^0\|^2_{DG}}{2\tau}
   - {\cal A}_h(\xi^{1/2},\mathcal{R}_{u,h}^{1}).
\end{equation}
Combining \eqref{eq:cn-kin-damp}--\eqref{eq:cn-elas-err} yields the CN
energy identity
\begin{equation}\label{eq:cn-energy-err}
  \frac{\|\varphi^1\|^2-\|\varphi^0\|^2}{2\tau}
  +\frac{\|\xi^1\|^2_{DG}-\|\xi^0\|^2_{DG}}{2\tau}
  +\sigma\|\varphi^{1/2}\|^2
  = (\mathcal{R}_v^{1},\varphi^{1/2})
   + {\cal A}_h(\xi^{1/2},\mathcal{R}_{u,h}^{1})+{\cal A}_h(\theta^{1/2},\psi_h),.
\end{equation}
By Taylor expansion and \eqref{eq:te-bound}:
\begin{equation}\label{eq:R-cn-tay}
  \|\varepsilon_u^1\|^2+\|\varepsilon_v^1\|^2
  \le C\tau^3\!\int_0^{t_1}\bigl(\|u_{ttt}\|^2+\|v_{ttt}\|^2\bigr)\,\mathrm{d}s.
\end{equation}
Multiply \eqref{eq:cn-energy-err} by $2\tau$. By Cauchy--Schwarz and
Young's inequality:
\[
  2\tau(\mathcal{R}_v^{1},\varphi^{1/2})
  \le \tau\sigma\|\varphi^{1/2}\|^2+\frac{\tau}{\sigma}\|\mathcal{R}_v^{1}\|^2,
\]
and using the inverse inequality \eqref{eq:inverse-ineq} for $w_h\in V_h$:
\[
  2\tau|{\cal A}_h(\xi^{1/2},\mathcal{R}_{u,h}^{1})|
  \le \tau\nu\|\xi^{1/2}\|^2_{DG}+\frac{C\tau}{\nu h^{2}}\|\mathcal{R}_u^{1}\|^2.
\]
Now, we use \eqref{eq:local-optimal} to obtain
\[{\cal A}_h(\theta^{1/2},\psi_h)\le \|\theta^{1/2}\|_{DG}\|\psi_h\|_{DG}\le h^{k}\|u\|\|\psi_h\|_{DG}\]
We now bound $\|\mathcal{R}_u^{1}\|^2$ and $\|\mathcal{R}_v^{1}\|^2$ using the triangle inequality,
\eqref{eq:R-cn-tay}, \eqref{eq:local-optimal}, and \eqref{eq:cn-nonlin-resid}:
\begin{align*}
  \|\mathcal{R}_u^{1}\|^2
  &= \|\varepsilon_u^1 - \partial_\tau\theta^1 - \varrho^{1/2}\|^2 \\
  &\le 3\|\varepsilon_u^1\|^2 + 3\|\partial_\tau\theta^1\|^2 + 3\|\varrho^{1/2}\|^2 \\
  &\le 3C\tau^3\!\int_0^{t_1}\|u_{ttt}\|^2\,\mathrm{d}s
    + 3Ch^{2(k+1)}\tau^{-1}\|u_t\|^2_{L^2(0,t_1;H^{k+1})}
    + 3Ch^{2(k+1)}\|v^{1/2}\|^2_{H^{k+1}},
\end{align*}
\begin{align*}
  \|\mathcal{R}_v^{1}\|^2
  &= \|-\sigma\varrho^{1/2} + \varepsilon_v^1 - \partial_\tau\varrho^1 + [g(u^{1/2})-G(u_h^1,u_h^{-1})]\|^2 \\
  &\le 4\sigma^2\|\varrho^{1/2}\|^2 + 4\|\varepsilon_v^1\|^2 + 4\|\partial_\tau\varrho^1\|^2
    + 4\|g(u^{1/2})-G(u_h^1,u_h^{-1})\|^2 \\
  &\le 4\sigma^2Ch^{2(k+1)}\|v^{1/2}\|^2_{H^{k+1}}
    + 4C\tau^3\!\int_0^{t_1}\|v_{ttt}\|^2\,\mathrm{d}s
    + 4Ch^{2(k+1)}\tau^{-1}\|v_t\|^2_{L^2(0,t_1;H^{k+1})} \\
  &\quad + 4C_g^2\bigl(\|\theta^{1/2}\|^2+\|\xi^{1/2}\|^2
    + \tau^4\|u_{ttt}\|^2_{L^\infty(0,t_1;L^\infty)}\bigr).
\end{align*}
Substituting these bounds into the energy identity, the term $\frac{C\tau}{\nu h^{2}}\|\mathcal{R}_u^{1}\|^2$ contains
\[
  \frac{C\tau}{\nu h^{2}}\cdot Ch^{2(k+1)}\tau^{-1}\|u_t\|^2_{L^2(0,t_1;H^{k+1})}
  = \frac{C^2}{\nu}h^{2k}\|u_t\|^2_{L^2(0,t_1;H^{k+1})},
\]
Choosing $\nu$ sufficiently small so that the $\nu\|\xi^{1/2}\|^2_{DG}$ term and the $C_g^2\|\xi^{1/2}\|^2$ term are absorbed,
using $\xi^0=\varphi^0=0$, we obtain
\begin{align}\label{eq:cn-step-bound}
  \|\xi^1\|^2_{DG}+\|\varphi^1\|^2+\tau\sigma\|\varphi^{1/2}\|^2
  &\le Ch^{2k}\bigl(\|u\|^2_{L^\infty(H^{k+1})}+\|v\|^2_{L^\infty(H^{k+1})}
       +\|u_t\|^2_{L^2(H^{k+1})}+\|v_t\|^2_{L^2(H^{k+1})}\bigr)\nonumber\\
  &~~~~~~~+C\tau^4\!\int_0^{t_1}\!\bigl(\|u_{ttt}\|^2+\|v_{ttt}\|^2\bigr)\,\mathrm{d}s.
\end{align}
\bigskip
Now, for $n\ge 2$ at $t=t_n$ the exact solution satisfies, for all $\phi_h,\psi_h\in V_h$,
\begin{subequations}\label{eq:exact-bdf2-err}
\begin{align}
  \bigl(u_t^n,\phi_h\bigr) &= \bigl(v^n,\phi_h\bigr),
  \label{eq:exact-bdf2-err-u}\\
  \bigl(v_t^n,\psi_h\bigr)+\sigma\bigl(v^n,\psi_h\bigr)
  +{\cal A}_h\bigl(u^n,\psi_h\bigr)+\bigl(g(u^n),\psi_h\bigr)
  &= \bigl(f^n,\psi_h\bigr).
  \label{eq:exact-bdf2-err-v}
\end{align}
\end{subequations}
Subtracting \eqref{eq:bdf2} 
\begin{subequations}\label{eq:err-bdf2-eq}
\begin{align}
  \bigl(D^{(2)}_t\xi^n,\phi_h\bigr)
  &= \bigl(\varphi^n,\phi_h\bigr)+\bigl(\mathcal{R}_u^{n},\phi_h\bigr),
  \label{eq:err-bdf2-u}\\
  \bigl(D^{(2)}_t\varphi^n,\psi_h\bigr)
  +\sigma\bigl(\varphi^n,\psi_h\bigr)
  +{\cal A}_h\bigl(\xi^n,\psi_h\bigr)
  &= \bigl(\mathcal{R}_v^{n},\psi_h\bigr)+{\cal A}_h(\theta^{n},\psi_h),,
  \label{eq:err-bdf2-v}
\end{align}
\end{subequations}
with truncation residuals
\begin{align}
  \mathcal{R}_u^{n} &:= \varepsilon_u^n
   -D^{(2)}_t\theta^n
   -\varrho^n,
  \label{eq:R-u-n}\\
  \mathcal{R}_v^{n} &:= -\sigma\,\varrho^n
   +\varepsilon_v^n
   -D^{(2)}_t\varrho^n
   +\bigl[g(u^n)-G(u_h^n,u_h^{n-2})\bigr].
  \label{eq:R-v-n}
\end{align}

\emph{Nonlinear residual for the BDF2 step.}
Decompose
\[
  g(u^n)-G(u_h^n,u_h^{n-2})
  = \underbrace{[g(u^n)-G(u^n,u^{n-2})]}_{=:\,\tau^n_4}
  + \underbrace{[G(u^n,u^{n-2})-G(u_h^n,u_h^{n-2})]}_{=:\,\tau^n_5}.
\]
By Lemma~\ref{lem:G-properties}(iii), $\|\tau^n_4\|\le C\tau^2\|u_{ttt}\|_{L^\infty(t_{n-2},t_n;L^\infty)}$.
By the MVT and boundedness Lemma \ref{lem:energy} 
and the error decomposition
$u^j-u_h^j=\theta^j+\xi^j$:
\begin{equation}\label{eq:R-bdf2-non}
  \|\tau^n_5\|\le C_g\bigl(\|\theta^n\|+\|\theta^{n-2}\|+\|\xi^n\|+\|\xi^{n-2}\|\bigr).
\end{equation}
Hence, the full nonlinear residual satisfies
\begin{equation}\label{eq:nonlin-full}
  \|g(u^n)-G(u_h^n,u_h^{n-2})\|
  \le C_g\bigl(\|\theta^n\|+\|\xi^n\|+\|\theta^{n-2}\|+\|\xi^{n-2}\|
       +\tau^2\|u_{ttt}\|_{L^\infty(L^\infty)}\bigr).
\end{equation}
Define the BDF2 G-stable quadratic forms
\begin{equation}\label{eq:G-forms}
  \mathcal{G}_v^n := \tfrac12\bigl(\|\varphi^n\|^2+\|2\varphi^n-\varphi^{n-1}\|^2\bigr),
  \qquad
  \mathcal{G}_\xi^n := \tfrac12\bigl(\|\xi^n\|^2_{DG}+\|2\xi^n-\xi^{n-1}\|^2_{DG}\bigr).
\end{equation}
Setting $\psi_h=\partial_\tau\varphi^n$ in \eqref{eq:err-bdf2-v} and using
the BDF2 G-stability identities:
\begin{align}
  \tau(D^{(2)}_t\varphi^n,\partial_\tau\varphi^n)
  &\ge \mathcal{G}_v^n-\mathcal{G}_v^{n-1}+\tau\|\partial_\tau\varphi^n\|^2,
  \label{eq:bdf2-err-kin}\\
  \tau\sigma(\varphi^n,\partial_\tau\varphi^n)
  &= \tfrac{\sigma}{2}\bigl(\|\varphi^n\|^2-\|\varphi^{n-1}\|^2\bigr)
    +\tfrac{\sigma\tau^2}{2}\|\partial_\tau\varphi^n\|^2.
  \label{eq:bdf2-err-damp}
\end{align}
For the elastic term, using \eqref{eq:err-bdf2-u} to write
$\partial_\tau\varphi^n=\partial_\tau D^{(2)}_t\xi^n-\partial_\tau\mathcal{R}_{u,h}^{n}$
and the BDF2 G-stability identity for ${\cal A}_h$:
\begin{equation}\label{eq:bdf2-err-elas}
  \tau\,{\cal A}_h(\xi^n,\partial_\tau\varphi^n)
  \ge \mathcal{G}_\xi^n-\mathcal{G}_\xi^{n-1}
   -\tau\,{\cal A}_h(\xi^n,\partial_\tau\mathcal{R}_{u,h}^{n}).
\end{equation}
Summing yields the BDF2 energy identity
\begin{equation}\label{eq:bdf2-energy-err}
  \bigl(\mathcal{G}_v^n-\mathcal{G}_v^{n-1}\bigr)
  +\bigl(\mathcal{G}_\xi^n-\mathcal{G}_\xi^{n-1}\bigr)
  +\tau\|\partial_\tau\varphi^n\|^2
  +\tfrac{\sigma}{2}\bigl(\|\varphi^n\|^2-\|\varphi^{n-1}\|^2\bigr)
  \le \tau(\mathcal{R}_v^{n},\partial_\tau\varphi^n)
   + \tau\,{\cal A}_h(\xi^n,\partial_\tau\mathcal{R}_{u,h}^{n}).
\end{equation}
By BDF2 Taylor expansion (Lemma~\ref{lem:taylor}) and \eqref{eq:te-bound}:
\begin{equation}\label{eq:R-bdf2-tay}
  \|\varepsilon_u^n\|^2+\|\varepsilon_v^n\|^2
  \le C\tau^3\!\int_{t_{n-2}}^{t_n}\!\bigl(\|u_{ttt}\|^2+\|v_{ttt}\|^2\bigr)\,\mathrm{d}s.
\end{equation}
The approximation property \eqref{eq:local-optimal} yields
\begin{equation}\label{eq:R-bdf2-app}
  \|\theta^n\|^2+\|\varrho^n\|^2
  \le Ch^{2(k+1)}\bigl(\|u^n\|^2_{H^{k+1}}+\|v^n\|^2_{H^{k+1}}\bigr).
\end{equation}
Writing $D^{(2)}_t\theta^n=\tfrac{1}{2\tau}(3\theta^n-4\theta^{n-1}+\theta^{n-2})$
and using Taylor's series with integral remainder:
\begin{equation}\label{eq:R-bdf2-dt}
  \|D^{(2)}_t\theta^n\|^2\le Ch^{2(k+1)}\tau^{-1}\|u_t\|^2_{L^2(t_{n-2},t_n;H^{k+1})},
\end{equation}
and analogously for $\|D^{(2)}_t\varrho^n\|^2$.
By Cauchy--Schwarz and Young's inequality:
\begin{align}
  \tau(\mathcal{R}_v^{n},\partial_\tau\varphi^n)
  &\le \tfrac{\tau}{2}\|\partial_\tau\varphi^n\|^2
       +\tfrac{\tau}{2}\|\mathcal{R}_v^{n}\|^2,
  \label{eq:bdf2-Rv-bd}\\
  \tau|{\cal A}_h(\xi^n,\partial_\tau\mathcal{R}_{u,h}^{n})|
  &\le \nu\tau\,\|\xi^n\|^2_{DG}
       +\tfrac{C\tau}{\nu}\,\|\partial_\tau\mathcal{R}_{u,h}^{n}\|^2_{DG}.
  \label{eq:bdf2-Ru-bd}\\
  \tau|{\cal A}_h(\theta^{n},\psi_h)|&\le \nu\tau \|\theta^n\|^2_{DG}+\frac{C\tau}{\nu}\|\psi_h\|^2_{DG}\le h^{2k}\nu\tau \|u\|^2_{H^{k+1}}+\frac{C\tau}{\nu}\|\psi_h\|^2_{DG}
\end{align}
Using the triangle inequality, \eqref{eq:R-bdf2-tay}, \eqref{eq:R-bdf2-app}, \eqref{eq:R-bdf2-dt},
and \eqref{eq:nonlin-full}, we bound the residuals:
\begin{align*}
  \|\mathcal{R}_u^{n}\|^2
  &= \|\varepsilon_u^n - D^{(2)}_t\theta^n - \varrho^n\|^2 \\
  &\le 3\|\varepsilon_u^n\|^2 + 3\|D^{(2)}_t\theta^n\|^2 + 3\|\varrho^n\|^2 \\
  &\le 3C\tau^3\!\int_{t_{n-2}}^{t_n}\|u_{ttt}\|^2\,\mathrm{d}s
    + 3Ch^{2(k+1)}\tau^{-1}\|u_t\|^2_{L^2(t_{n-2},t_n;H^{k+1})}
    + 3Ch^{2(k+1)}\|v^n\|^2_{H^{k+1}},
\end{align*}
\begin{align*}
  \|\mathcal{R}_v^{n}\|^2
  &= \|-\sigma\varrho^n + \varepsilon_v^n - D^{(2)}_t\varrho^n + [g(u^n)-G(u_h^n,u_h^{n-2})]\|^2 \\
  &\le 4\sigma^2\|\varrho^n\|^2 + 4\|\varepsilon_v^n\|^2 + 4\|D^{(2)}_t\varrho^n\|^2
    + 4\|g(u^n)-G(u_h^n,u_h^{n-2})\|^2 \\
  &\le 4\sigma^2Ch^{2(k+1)}\|v^n\|^2_{H^{k+1}}
    + 4C\tau^3\!\int_{t_{n-2}}^{t_n}\|v_{ttt}\|^2\,\mathrm{d}s
    + 4Ch^{2(k+1)}\tau^{-1}\|v_t\|^2_{L^2(t_{n-2},t_n;H^{k+1})} \\
  &\quad + 4C_g^2\bigl(\|\theta^n\|^2+\|\xi^n\|^2+\|\theta^{n-2}\|^2+\|\xi^{n-2}\|^2
    + \tau^4\|u_{ttt}\|^2_{L^\infty(t_{n-2},t_n;L^\infty)}\bigr).
\end{align*}
Summing \eqref{eq:bdf2-energy-err} from $n=2$ to $N$ and telescoping
$\mathcal{G}_v^n,\mathcal{G}_\xi^n$ gives
\begin{align*}
  \mathcal{G}_v^N - \mathcal{G}_v^1 + \mathcal{G}_\xi^N - \mathcal{G}_\xi^1
  + \tau\sum_{n=2}^{N}\|\partial_\tau\varphi^n\|^2
  + \tfrac{\sigma}{2}\bigl(\|\varphi^N\|^2-\|\varphi^1\|^2\bigr)
  &\le \tau\sum_{n=2}^{N}(\mathcal{R}_v^{n},\partial_\tau\varphi^n)
   + \tau\sum_{n=2}^{N}{\cal A}_h(\xi^n,\partial_\tau\mathcal{R}_{u,h}^{n}).
\end{align*}
Applying the residual bounds above and choosing $\nu$ small, using the inverse inequality
\eqref{eq:inverse-ineq}, we obtain
\begin{align}\label{eq:bdf2-err-sum}
  \|\xi^N\|^2_{DG}+\|\varphi^N\|^2
  &+\tau\sum_{n=2}^{N}\|\partial_\tau\varphi^n\|^2
  \notag\\
  &\le C\bigl(\|\xi^1\|^2_{DG}+\|\varphi^1\|^2\bigr)
  + Ch^{2k}\bigl(\|u\|^2_{L^\infty(H^{k+1})}+\|v\|^2_{L^\infty(H^{k+1})}
       +\|u_t\|^2_{L^2(H^{k+1})}+\|v_t\|^2_{L^2(H^{k+1})}\bigr) \notag\\
  &\quad +C\tau^4\!\int_{t_1}^{t_N}\!\bigl(\|u_{ttt}\|^2+\|v_{ttt}\|^2\bigr)\,\mathrm{d}s
   +C\tau\sum_{n=2}^{N}\bigl(\|\xi^n\|^2_{DG}+\|\varphi^n\|^2\bigr),
\end{align}
where the last term on the right comes from the nonlinear residual bound
\eqref{eq:nonlin-full} and the coupling term $\|\partial_\tau\mathcal{R}_{u,h}^{n}\|_{DG}^2$.
Adding \eqref{eq:cn-step-bound} to \eqref{eq:bdf2-err-sum} and using
$\xi^0=\varphi^0=0$:
\begin{align}\label{eq:err-sum}
  \|\xi^N\|^2_{DG}+\|\varphi^N\|^2
  +\tau\sum_{n=1}^{N}\bigl(\|\xi^n\|^2_{DG}+\|\varphi^n\|^2\bigr)
  &\le C\,h^{2k}\bigl(\|u\|^2_{L^\infty(H^{k+1})}+\|v\|^2_{L^\infty(H^{k+1})}
       +\|u_t\|^2_{L^2(H^{k+1})}+\|v_t\|^2_{L^2(H^{k+1})}\bigr) \notag\\
  &\quad +C\tau^4\!\int_0^{t_N}\!\bigl(\|u_{ttt}\|^2+\|v_{ttt}\|^2\bigr)\,\mathrm{d}s
   +C\tau\sum_{n=1}^{N}\bigl(\|\xi^n\|^2_{DG}+\|\varphi^n\|^2\bigr).
\end{align}
For $\tau$ sufficiently small ($1-C\tau\ge\tfrac{1}{2}$),
the last term is absorbed into the left-hand side by the discrete
Gr\"onwall inequality, yielding the stated bound \eqref{eq:err-bound}.
\end{proof}

\subsection{$L^2$- norm error estimate}\label{subsec:l2-error}

\begin{thm}[$L^2$ error estimate]\label{thm:l2-error}
Under the hypotheses of Theorem~\ref{thm:error}, and additionally
$u,v\in L^2(0,T;H^{k+1}(\Omega))$, the fully discrete error satisfies
\begin{equation}\label{eq:l2-err-bound}
  \|u(t_N)-u_h^N\|^2 + \|v(t_N)-v_h^N\|^2
  \le C\bigl(h^{2(k+1)}+\tau^4\bigr),
\end{equation}
for all $N\ge1$, provided $\tau$ is sufficiently small.
\end{thm}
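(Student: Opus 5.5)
The plan is to obtain the optimal $L^2$ rate by avoiding the $L^2$ projection in the stiffness term. The $L^2$ projection is $\mathcal{A}_h$-non-orthogonal, and the consistency term ${\cal A}_h(\theta^n,\psi_h)$ in Theorem~\ref{thm:error} only yields $h^{k}$. We therefore switch to the elliptic (Ritz) projection for the $u$-component. Define $R_h:H^{k+1}(\Omega)\cap H^1_0(\Omega)\to V_h$ by ${\cal A}_h(R_h w,\chi)={\cal A}_h(w,\chi)$ for all $\chi\in V_h$. By Lemma~\ref{lem:coercivity}, the continuity of ${\cal A}_h$, \eqref{eq:global-estimates}, and a standard Aubin--Nitsche duality argument (using the symmetry of SIPG and elliptic regularity on the convex polygon), we get $\|w-R_hw\|+h\|w-R_hw\|_{DG}\le Ch^{k+1}\|w\|_{H^{k+1}}$, and the same bound for $\partial_t(w-R_hw)$ since $R_h$ commutes with $\partial_t$.

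We then redo the error splitting with $\tilde\theta^n:=u^n-R_hu^n$ and $\tilde\xi^n:=R_hu^n-u_h^n$, keeping $\varrho^n,\varphi^n$ as in \eqref{eq:err-v}. The error equations have the same form as \eqref{eq:err-cn} and \eqref{eq:err-bdf2-eq}, except that the offending term ${\cal A}_h(\theta^n,\psi_h)$ vanishes identically. The residuals now contain only $L^2$-type quantities: $\varepsilon_u^n,\varepsilon_v^n$ (which are $O(\tau^2)$ by \eqref{eq:te-bound}), $D_t^{(2)}\tilde\theta^n$ and $\partial_\tau\tilde\theta^1$ (bounded by $h^{k+1}\tau^{-1/2}\|u_t\|_{L^2(H^{k+1})}$ over the local interval), $\varrho^n$ and $D^{(2)}_t\varrho^n$, and the nonlinear term bounded through \eqref{eq:nonlin-full} with $\tilde\theta,\tilde\xi$ in place of $\theta,\xi$. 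The required boundedness of $u_h^n$ in $L^\infty$ for the MVT bound \eqref{eq:G-lip} will be taken from Lemma~\ref{lem:energy} together with Theorem~\ref{thm:error} and an inverse estimate, or simply assumed as in the earlier proofs.

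Next we test the $v$-error equation with $\varphi^n$, or with $\varphi^{1/2}$ at $n=1$, rather than $\partial_\tau\varphi^n$. For the stiffness term we substitute $\varphi^n=D^{(2)}_t\tilde\xi^n-P_h\mathcal{R}_u^n$ from the $u$-equation. The BDF2 G-stability identity then gives the telescoping of $\mathcal{G}_\xi^n$ measured in the ${\cal A}_h$-norm, plus a coupling term ${\cal A}_h(\tilde\xi^n,P_h\mathcal{R}_u^n)$. This coupling term is the main obstacle, since bounding it by the inverse inequality costs $h^{-1}$ and destroys optimality. The plan is to sum by parts in time: write $\tau\sum_n{\cal A}_h(\tilde\xi^n,P_h\mathcal{R}_u^n)$ and transfer the structure to the $\mathcal{R}_u$ side. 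The key observation is that with the Ritz projection the $u$-residual is $\varepsilon_u^n-P_hD_t^{(2)}\tilde\theta^n-\Pi_h\varrho^n$, and one can redefine the discrete velocity error as $\tilde\varphi^n:=D_t^{(2)}R_hu^n-v_h^n$. This choice makes the $u$-error equation exact, with $D_t^{(2)}\tilde\xi^n=\tilde\varphi^n$, at the price of moving $\varepsilon_u^n$ and $D_t^{(2)}\tilde\theta^n$ into the $v$-equation residual, where they appear only in $L^2$ form through the $O(\tau^2)$ differences. If this works, the coupling term disappears and we obtain
\[
\|\tilde\varphi^N\|^2+\|\tilde\xi^N\|_{DG}^2\le C\tau\sum_{n\le N}\bigl(\|\tilde\xi^n\|^2+\|\tilde\varphi^n\|^2\bigr)+C(h^{2(k+1)}+\tau^4).
\]
The CN step is handled identically and seeds the recursion with $\tilde\xi^0=O(h^{k+1})$ in $L^2$.

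The proof closes with the discrete Gr\"onwall inequality for $\tau$ small, using $\|\tilde\xi^n\|\le C\|\tilde\xi^n\|_{DG}$ (discrete Poincaré). This gives $\|\tilde\xi^N\|^2+\|\tilde\varphi^N\|^2\le C(h^{2(k+1)}+\tau^4)$. Finally, the triangle inequality $\|u(t_N)-u_h^N\|\le\|\tilde\theta^N\|+\|\tilde\xi^N\|$ together with the Ritz estimate, and $\|v(t_N)-v_h^N\|\le\|v^N-D_t^{(2)}u^n\|+\|D_t^{(2)}\tilde\theta^N\|+\|\tilde\varphi^N\|$ together with Lemma~\ref{lem:taylor}, yields \eqref{eq:l2-err-bound}.
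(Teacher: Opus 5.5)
Your route differs from the paper's. The paper keeps the $L^2$ projection throughout, so that $\xi^0=\varphi^0=0$. It tests with $\varphi^n$ and bounds the coupling term ${\cal A}_h(\xi^n,\mathcal{R}_{u,h}^n)$ by the inverse inequality under $\tau\le Ch^2$. As you anticipate, that produces a contribution of order $h^{2k}$. Your switch to the Ritz projection with $\tilde\varphi^n:=D^{(2)}_tR_hu^n-v_h^n$ is sound for $n\ge2$. Since $v_h^n=D^{(2)}_tu_h^n$ as elements of $V_h$, you get $D^{(2)}_t\tilde\xi^n=\tilde\varphi^n$ exactly, and ${\cal A}_h(u^n-R_hu^n,\cdot)$ vanishes on $V_h$. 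The new residual terms $D^{(2)}_t\varepsilon_u^n$ and $(D^{(2)}_t)^2(u^n-R_hu^n)$ are controlled by $v_{ttt}=\partial_t^4u\in L^2(0,T;L^2(\Omega))$ and $v_t=u_{tt}\in L^2(0,T;H^{k+1}(\Omega))$, which are among the hypotheses.

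The step that fails is the seeding of the recursion. Testing with $\tilde\varphi^n$ telescopes $\mathcal{G}_\xi^n$ in the ${\cal A}_h$-norm, and the CN energy identity bounds $\|\tilde\xi^1\|_{DG}$ only in terms of $\|\tilde\xi^0\|_{DG}$ plus residuals. So the right-hand side of your displayed inequality necessarily contains $\|\tilde\xi^0\|_{DG}^2$, not $\|\tilde\xi^0\|^2$.

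Under the hypotheses of the theorem the scheme starts from $u_h^0=\Pi_hu_0$, so $\tilde\xi^0=R_hu_0-\Pi_hu_0=-\Pi_h(u_0-R_hu_0)$. Its $L^2$ norm is $O(h^{k+1})$, but in the DG norm the only general bound is $\|\tilde\xi^0\|_{DG}\le Ch^{-1}\|\tilde\xi^0\|\le Ch^{k}$. Indeed, $\|\tilde\xi^0\|_{DG}$ is equivalent to the dual norm of $\chi\mapsto{\cal A}_h(u_0-\Pi_hu_0,\chi)$, which is in general of size $h^k$, so there is no superconvergence between the two projections to invoke. Your argument therefore only gives $\|\tilde\xi^N\|+\|\tilde\varphi^N\|\le C(h^k+\tau^2)$, no better than Theorem~\ref{thm:error}.

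Closing this needs an additional idea. One option is to start the scheme from $u_h^0=R_hu_0$, but that changes the method the theorem is about. The other is a weaker-norm stability estimate showing that an $O(h^{k+1})$ $L^2$-perturbation of the initial displacement stays $O(h^{k+1})$ in $L^2$. That could be done with Baker's time-integrated test function, or by testing with $A_h^{-1}$ applied to the error, where $A_h$ is the operator of ${\cal A}_h$ on $V_h$; the nonlinearity is harmless there since $G$ is Lipschitz in $L^2$ on $L^\infty$-bounded sets.

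Smaller points. The Aubin--Nitsche bound for $R_h$ needs $H^2$-regularity (convex $\Omega$, $\kappa\in W^{1,\infty}$), which (A1)--(A2) do not provide. Also, ``the CN step is handled identically'' hides real work. First, $\tilde\varphi^0,\tilde\varphi^1$ must be defined compatibly with $\tilde\varphi^n$; otherwise $D^{(2)}_t$ at $n=2,3$ mixes Ritz- and $L^2$-projection errors and produces residuals of order $h^{k+1}/\tau$. Second, the CN nonlinear residual $g(u^{1/2})-G(u^1,u^0-\tau v^0)$ is only $O(\tau)$, so it must be absorbed as a single-step contribution rather than through a $\tau\|\cdot\|^2$ Young splitting.
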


\begin{proof}
By the triangle inequality and \eqref{eq:local-optimal},
$\|u^N-u_h^N\|\le\|\theta^N\|+\|\xi^N\|\le Ch^{k+1}\|u^N\|_{H^{k+1}}+\|\xi^N\|$,
so it suffices to estimate $\|\xi^N\|$ and $\|\varphi^N\|$ in the $L^2$-norm.

\bigskip
Now for $n=1$ we test \eqref{eq:err-cn-v} with $\psi_h=\varphi^{1/2}$.
Using $(\partial_\tau\varphi^1,\varphi^{1/2})
=\tfrac{1}{2\tau}(\|\varphi^1\|^2-\|\varphi^0\|^2)$
and multiplying by $2\tau$ gives
\[
  \|\varphi^1\|^2-\|\varphi^0\|^2+2\tau\sigma\|\varphi^{1/2}\|^2
  +2\tau{\cal A}_h(\xi^{1/2},\varphi^{1/2})
  = 2\tau(\mathcal{R}_v^{1},\varphi^{1/2}).
\]
From \eqref{eq:err-cn-u}, $\varphi^{1/2}=\partial_\tau\xi^1-\mathcal{R}_{u,h}^1$, so by symmetry
\[
  {\cal A}_h(\xi^{1/2},\varphi^{1/2})
  = \frac{\|\xi^1\|^2_{DG}-\|\xi^0\|^2_{DG}}{2\tau}
   -{\cal A}_h(\xi^{1/2},\mathcal{R}_{u,h}^1).
\]
Applying Cauchy--Schwarz and Young's inequality to the right-hand side:
\[
  2\tau|(\mathcal{R}_v^{1},\varphi^{1/2})|
  \le \tau\sigma\|\varphi^{1/2}\|^2+\frac{\tau}{\sigma}\|\mathcal{R}_v^{1}\|^2,
\]
\[
  2\tau|{\cal A}_h(\xi^{1/2},\mathcal{R}_{u,h}^1)|
  \le \tfrac12\|\xi^1\|^2_{DG}+C\|\mathcal{R}_{u,h}^1\|^2_{DG}.
\]
Using the bound $\|\mathcal{R}_v^{1}\|^2$ derived in the energy norm proof,
the approximation property \eqref{eq:approx}, and the energy estimate
\eqref{eq:cn-step-bound} for $\|\xi^{1/2}\|^2_{DG}$, we obtain
\begin{equation}\label{eq:l2-cn-bound}
  \|\varphi^1\|^2 + 2\tau\sigma\|\varphi^{1/2}\|^2
  \le C\,h^{2(k+1)}\bigl(\|u^{1/2}\|^2_{H^{k+1}}+\|v^{1/2}\|^2_{H^{k+1}}\bigr)
  +C\tau^4\!\int_0^{t_1}\!\bigl(\|u_{ttt}\|^2+\|v_{ttt}\|^2\bigr)\,\mathrm{d}s
  +C\tau h^{2k}.
\end{equation}

For $\|\xi^1\|$, from \eqref{eq:err-cn-u} with $\phi_h=\xi^{1/2}$:
\[
  \frac{\|\xi^1\|^2-\|\xi^0\|^2}{2\tau}
  = (\varphi^{1/2},\xi^{1/2})+(\mathcal{R}_u^1,\xi^{1/2}).
\]
Applying Cauchy--Schwarz and Young's inequality:
\[
  (\varphi^{1/2},\xi^{1/2}) \le C\|\varphi^{1/2}\|^2+\tfrac{1}{4C}\|\xi^{1/2}\|^2,
\]
\[
  (\mathcal{R}_u^1,\xi^{1/2}) \le C\|\mathcal{R}_u^1\|^2+\tfrac{1}{4C}\|\xi^{1/2}\|^2.
\]
Using $\xi^0=0$ and $\|\xi^{1/2}\|=\frac12\|\xi^1\|$, we obtain
\[
  \frac{\|\xi^1\|^2}{2\tau} \le C\|\varphi^{1/2}\|^2+C\|\mathcal{R}_u^1\|^2+\tfrac18\|\xi^1\|^2.
\]
Rearranging gives
\[
  \tfrac{3}{8}\|\xi^1\|^2 \le C\tau\|\varphi^{1/2}\|^2+C\tau\|\mathcal{R}_u^1\|^2.
\]
Using the bound $\|\mathcal{R}_u^1\|^2$ derived in the energy norm proof and
\eqref{eq:l2-cn-bound} for $\|\varphi^{1/2}\|^2$, noting that $\tau h^{2k}\le h^{2(k+1)}$ when $\tau\le Ch^2$:
\begin{equation}\label{eq:l2-xi1}
  \|\xi^1\|^2 \le C\tau\bigl(\|\varphi^1\|^2+\|\mathcal{R}_u^1\|^2\bigr)
  \le C\bigl(h^{2(k+1)}+\tau^4\bigr).
\end{equation}

\bigskip
Next, for $n\ge 2$ we test \eqref{eq:err-bdf2-v} with $\psi_h=\varphi^n$ and use the BDF2
G-stability identity
$(D^{(2)}_t\varphi^n,\varphi^n)\ge\frac{1}{4\tau}(\|\varphi^n\|^2-\|\varphi^{n-2}\|^2)$:
\begin{align}\label{eq:l2-bdf2-tested}
  \frac{1}{4\tau}\bigl(\|\varphi^n\|^2-\|\varphi^{n-2}\|^2\bigr)
  +\sigma\|\varphi^n\|^2
  +{\cal A}_h(\xi^n,\varphi^n)
  &\le (\mathcal{R}_v^n,\varphi^n).
\end{align}
For the elastic-kinematic coupling, note from \eqref{eq:err-bdf2-u}:
\[
  {\cal A}_h(\xi^n,\varphi^n)
  = {\cal A}_h(\xi^n,D^{(2)}_t\xi^n-\mathcal{R}_{u,h}^n)
  \ge \frac{1}{4\tau}\bigl(\|\xi^n\|^2_{DG}-\|\xi^{n-2}\|^2_{DG}\bigr)
  -{\cal A}_h(\xi^n,\mathcal{R}_{u,h}^n).
\]
Applying Cauchy--Schwarz and Young's inequality:
\begin{align*}
  |(\mathcal{R}_v^n,\varphi^n)|
  &\le \tfrac{\sigma}{2}\|\varphi^n\|^2
       +\tfrac{1}{2\sigma}\|\mathcal{R}_v^n\|^2,\\
  |{\cal A}_h(\xi^n,\mathcal{R}_{u,h}^n)|
  &\le \tfrac14\|\xi^n\|^2_{DG}+C\|\mathcal{R}_{u,h}^n\|^2_{DG}
   \le \tfrac14\|\xi^n\|^2_{DG}+\tfrac{C}{h^2}\|\mathcal{R}_{u}^n\|^2.
\end{align*}
Multiplying \eqref{eq:l2-bdf2-tested} by $4\tau$ and using the residual bounds
derived in the energy norm proof:
\[
  \|\varphi^n\|^2-\|\varphi^{n-2}\|^2+\|\xi^n\|^2_{DG}-\|\xi^{n-2}\|^2_{DG}
  \le C\tau\|\mathcal{R}_v^n\|^2 + \tfrac{C\tau}{h^2}\|\mathcal{R}_u^n\|^2.
\]
Summing from $n=2$ to $N$ and telescoping:
\[
  \|\varphi^N\|^2+\|\varphi^{N-1}\|^2+\|\xi^N\|^2_{DG}+\|\xi^{N-1}\|^2_{DG}
  -\|\varphi^0\|^2-\|\varphi^1\|^2-\|\xi^0\|^2_{DG}-\|\xi^1\|^2_{DG}
  \le C\tau\sum_{n=2}^{N}\|\mathcal{R}_v^n\|^2
   + \tfrac{C\tau}{h^2}\sum_{n=2}^{N}\|\mathcal{R}_u^n\|^2.
\]
Using the residual bounds from the energy norm proof and noting that
$\frac{\tau}{h^2}\cdot h^{2(k+1)}\tau^{-1}=h^{2k}$, we obtain
\begin{align}\label{eq:l2-bdf2-sum}
  \|\varphi^N\|^2+\|\xi^N\|^2_{DG}
  &\le C\bigl(\|\varphi^1\|^2+\|\xi^1\|^2_{DG}\bigr)
  +Ch^{2(k+1)}\bigl(\|u\|^2_{L^2(H^{k+1})}+\|v\|^2_{L^2(H^{k+1})}\bigr)\notag\\
  &\quad +C\tau^4\!\int_0^{t_N}\!\bigl(\|u_{ttt}\|^2+\|v_{ttt}\|^2\bigr)\,\mathrm{d}s
  +C\tau\sum_{n=2}^N\bigl(\|\varphi^n\|^2+\|\xi^n\|^2_{DG}\bigr),
\end{align}
where the last term comes from the nonlinear residual \eqref{eq:nonlin-full}
and the coupling term $\frac{C\tau}{h^2}\|\mathcal{R}_u^n\|^2$ combined with the
energy estimate to give $h^{2k}$ which is bounded by $h^{2(k+1)}$ for $\tau\le Ch^2$.

\bigskip

Adding \eqref{eq:l2-cn-bound}--\eqref{eq:l2-xi1} to \eqref{eq:l2-bdf2-sum},
the initial contributions $\|\xi^1\|^2_{DG}+\|\varphi^1\|^2$ are bounded by
Step~A. For $\tau$ sufficiently small, the discrete Gr\"onwall inequality
absorbs the last sum on the right, yielding
\begin{equation}\label{eq:l2-final}
  \|\xi^N\|^2+\|\varphi^N\|^2
  \le C\bigl(h^{2(k+1)}+\tau^4\bigr).
\end{equation}
Combining with the projection error via the triangle inequality completes
the proof.
\end{proof}

\section{Numerical Experiments}\label{sec:numerics}
We present numerical experiments for the proposed SIPG method applied
to the weakly damped semilinear wave equation~\eqref{eq:model} in
$\Omega\times(0,T]$.  The model problem \eqref{eq:model} is solved
using the CN--BDF2 time integration on discontinuous Galerkin spaces
of polynomial degree~$k$.  Triangular meshes on the unit square
$\Omega=(0,1)^2$ (Sections~\ref{subsec:ex1}--\ref{subsec:ex2}) and on
the extended domain $(-10,10)^2$ (Section~\ref{subsec:sinegordon}) are
considered to assess the method's performance under linear, polynomial,
and trigonometric nonlinearities.

\medskip\noindent\textbf{Implementation details.}
All computations are performed using FEniCS~\cite{fenics2019} with the
SIPG bilinear form $\mathcal{A}_h(\cdot,\cdot)$ and penalty parameter
$\eta=10$.  The nonlinear algebraic system at each time step is
resolved via Picard (fixed-point) iteration.  The fully discrete
scheme, written in operator form, reads:
\begin{equation}\label{eq:fds-num}
\begin{cases}
  \bigl(D^{(2)}_t u_h^n,\,\phi_h\bigr)
  = \bigl(v_h^n,\,\phi_h\bigr),
  & n\ge2,\\[4pt]
  \bigl(D^{(2)}_t v_h^n,\,\psi_h\bigr)
  +\sigma\bigl(v_h^n,\,\psi_h\bigr)
  +\mathcal{A}_h\bigl(u_h^n,\,\psi_h\bigr)
  +\bigl(G(u_h^n,u_h^{n-2}),\,\psi_h\bigr)
  = \bigl(f^n,\,\psi_h\bigr),
  & n\ge2,\\[4pt]
  \bigl(\partial_\tau u_h^1,\,\phi_h\bigr)
  = \bigl(v_h^{1/2},\,\phi_h\bigr),
  & n=1,\\[4pt]
  \bigl(\partial_\tau v_h^1,\,\psi_h\bigr)
  +\sigma\bigl(v_h^{1/2},\,\psi_h\bigr)
  +\mathcal{A}_h\bigl(u_h^{1/2},\,\psi_h\bigr)
  +\bigl(G(u_h^1,u_h^{-1}),\,\psi_h\bigr)
  = \bigl(f^{1/2},\,\psi_h\bigr),
  & n=1,
\end{cases}
\end{equation}
for all $\phi_h,\psi_h\in V_h$.

\subsection{Linear equation with smooth solution}\label{subsec:ex1}

We consider the weakly damped linear wave equation~\eqref{eq:model}
with $g\equiv0$ and the manufactured exact solution
\begin{equation}\label{eq:ex1-exact}
  u(x,y,t) = t^2\sin(\pi x)\sin(\pi y),
\end{equation}
damping coefficient $\sigma=0.05$, and final time $T=0.5$.
The source term $f$ is determined analytically so that
\eqref{eq:ex1-exact} satisfies~\eqref{eq:model} exactly.
Since $g=0$, the scheme reduces to a linear system at each time level.

\subsubsection{Spatial convergence}

We verify the spatial accuracy of the SIPG discretization using DG
polynomial degree $k=1$ on a sequence of uniformly refined triangular
meshes with $M=8,16,32,64,128$ subdivisions per coordinate direction
($h=\sqrt{2}/M$).  The time step is chosen as
$\tau=h^{k+1}/2=h^2/2$, ensuring that the temporal truncation error
remains negligible relative to the spatial discretization error.

\begin{table}[htbp]
\centering
\caption{Spatial convergence for DG($k{=}1$),
$u=t^2\sin(\pi x)\sin(\pi y)$, $\sigma=0.05$, $g=0$, $\tau=h^2/2$.}
\label{tab:ex1}
\begin{tabular}{r c c c c c c}
\toprule
$M$ & $h$ & $\tau$ & $\|e_h\|$ & rate
    & $\|e_h\|_{DG}$ & rate \\
\midrule
  8  & 1.768e-01 & 1.562e-02 & 1.630e-03 & ---  & 7.189e-02 & --- \\
 16  & 8.839e-02 & 3.906e-03 & 4.241e-04 & 1.94 & 3.342e-02 & 1.11 \\
 32  & 4.419e-02 & 9.766e-04 & 1.085e-04 & 1.97 & 1.616e-02 & 1.05 \\
 64  & 2.210e-02 & 2.441e-04 & 2.747e-05 & 1.98 & 7.954e-03 & 1.02 \\
128  & 1.105e-02 & 6.104e-05 & 6.914e-06 & 1.99 & 3.948e-03 & 1.01 \\
\bottomrule
\end{tabular}
\end{table}

Table~\ref{tab:ex1} confirms that the $L^2$-error converges at the
optimal rate $\mathcal{O}(h^{k+1})=\mathcal{O}(h^2)$ and the discrete
energy-norm error at rate $\mathcal{O}(h^k)=\mathcal{O}(h)$, in
excellent agreement with the theoretical predictions of
Theorems~\ref{thm:error} and~\ref{thm:l2-error}.  The asymptotic
regime is clearly attained for $M\geq 16$.

\subsubsection{Temporal convergence}

To isolate the temporal accuracy of the CN--BDF2 integrator, we fix a
sufficiently fine spatial mesh ($M=128$, $h\approx1.1\times10^{-2}$)
so that the spatial discretization error is negligible, and
successively halve the time step~$\tau$.

\begin{table}[htbp]
\centering
\caption{Temporal convergence of the CN--BDF2 scheme
($M=128$, $k=1$, $T=0.5$).}
\label{tab:ex1-temporal}
\begin{tabular}{c c c}
\toprule
$\tau$ & $\|e^n_h\|$ & rate \\
\midrule
$1.0000\times 10^{-1}$ & 6.7231e-03 & ---  \\
$5.0000\times 10^{-2}$ & 1.8659e-03 & 1.85 \\
$2.5000\times 10^{-2}$ & 4.8922e-04 & 1.93 \\
$1.2500\times 10^{-2}$ & 1.2918e-04 & 1.92 \\
$6.2500\times 10^{-3}$ & 3.8088e-05 & 1.76 \\
\bottomrule
\end{tabular}
\end{table}

Table~\ref{tab:ex1-temporal} confirms second-order temporal accuracy.
The observed rates approach the theoretical value~$2$ for moderate time
steps and exhibit a mild plateau on the finest level, where the
temporal error becomes comparable to the residual spatial
discretization error---a standard artifact in mixed space--time
convergence studies.

\subsubsection{Solution profiles and Lyapunov functional}

\begin{figure}[htbp]
\centering
\begin{minipage}[t]{0.48\textwidth}
  \centering
  \includegraphics[width=\textwidth]{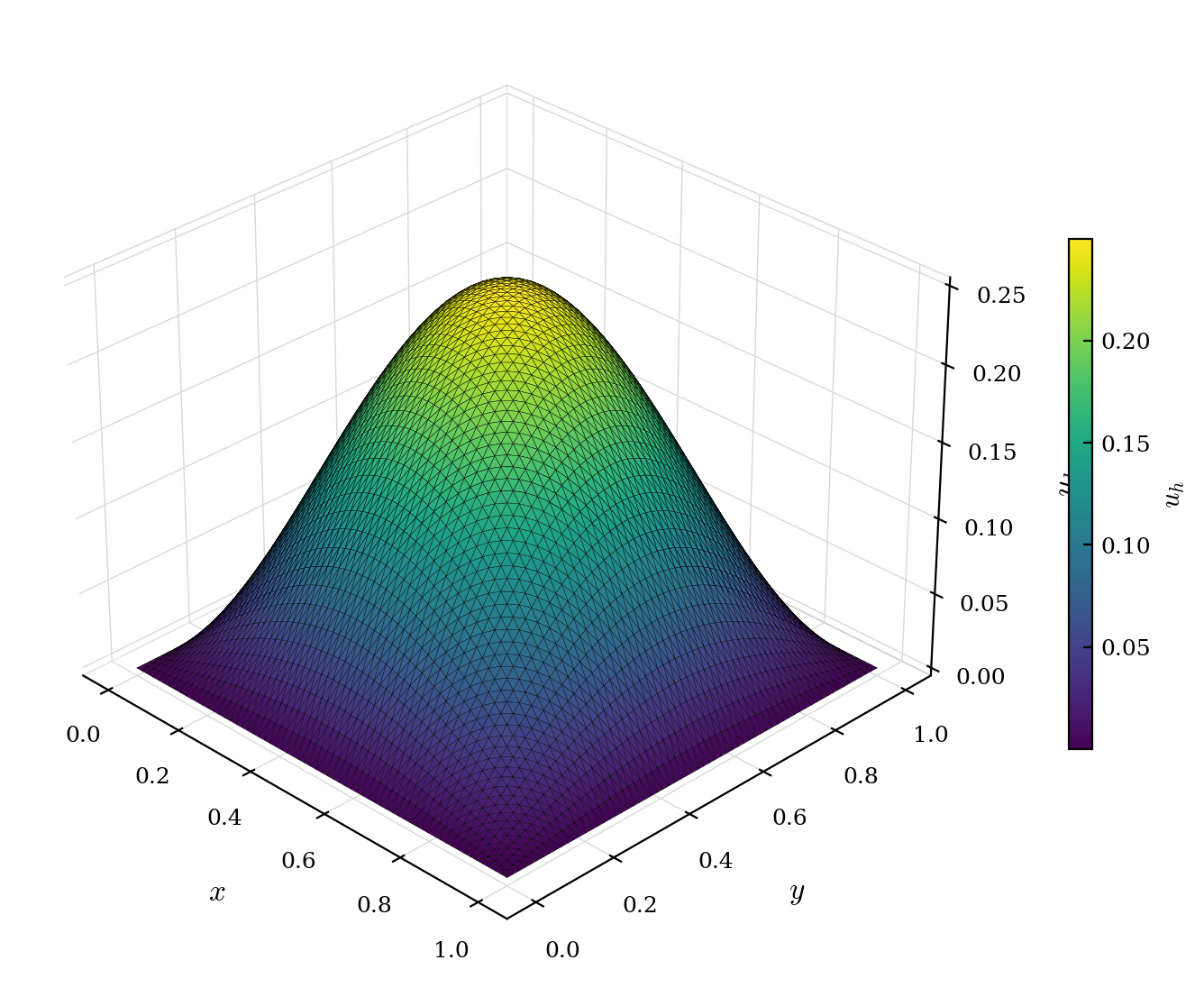}
  \caption{Linear wave equation: Surface plot of $u_h$ at $T=0.5$ ($M=128$, $k=1$).}
  \label{fig:ex1-surface}
\end{minipage}\hfill
\begin{minipage}[t]{0.48\textwidth}
  \centering
  \includegraphics[width=\textwidth]{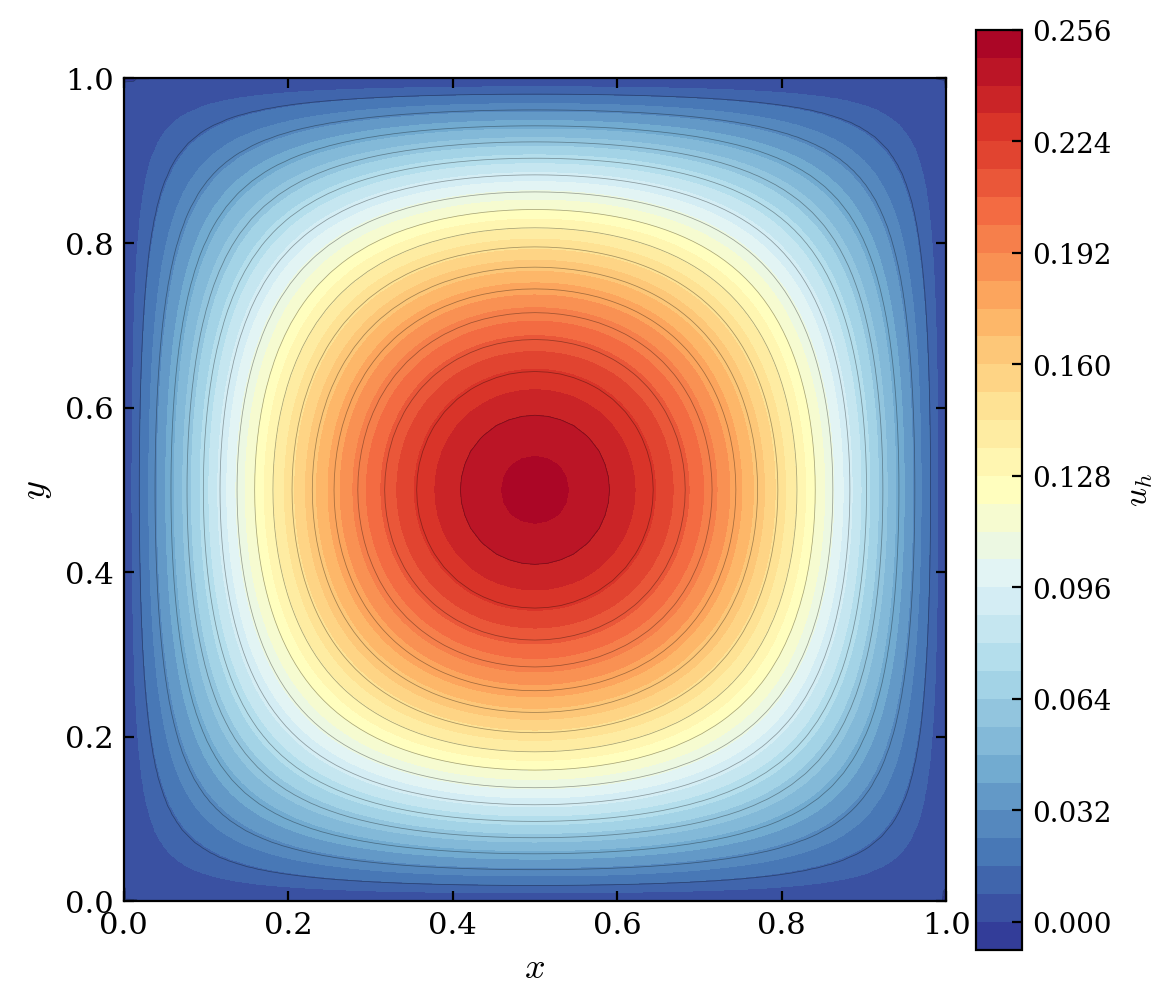}
  \caption{Linear wave equation: Contour plot of $u_h$ at $T=0.5$ ($M=128$, $k=1$).}
  \label{fig:ex1-contour}
\end{minipage}
\end{figure}

Figures~\ref{fig:ex1-surface} and~\ref{fig:ex1-contour} display
surface and filled-contour visualizations of the discrete solution at
$T=0.5$ on a mesh with $M=128$.  The smooth sinusoidal profile and
four-fold symmetry of the exact solution are faithfully reproduced by
the SIPG approximation.

\begin{figure}[htbp]
\centering
\includegraphics[width=0.65\textwidth]{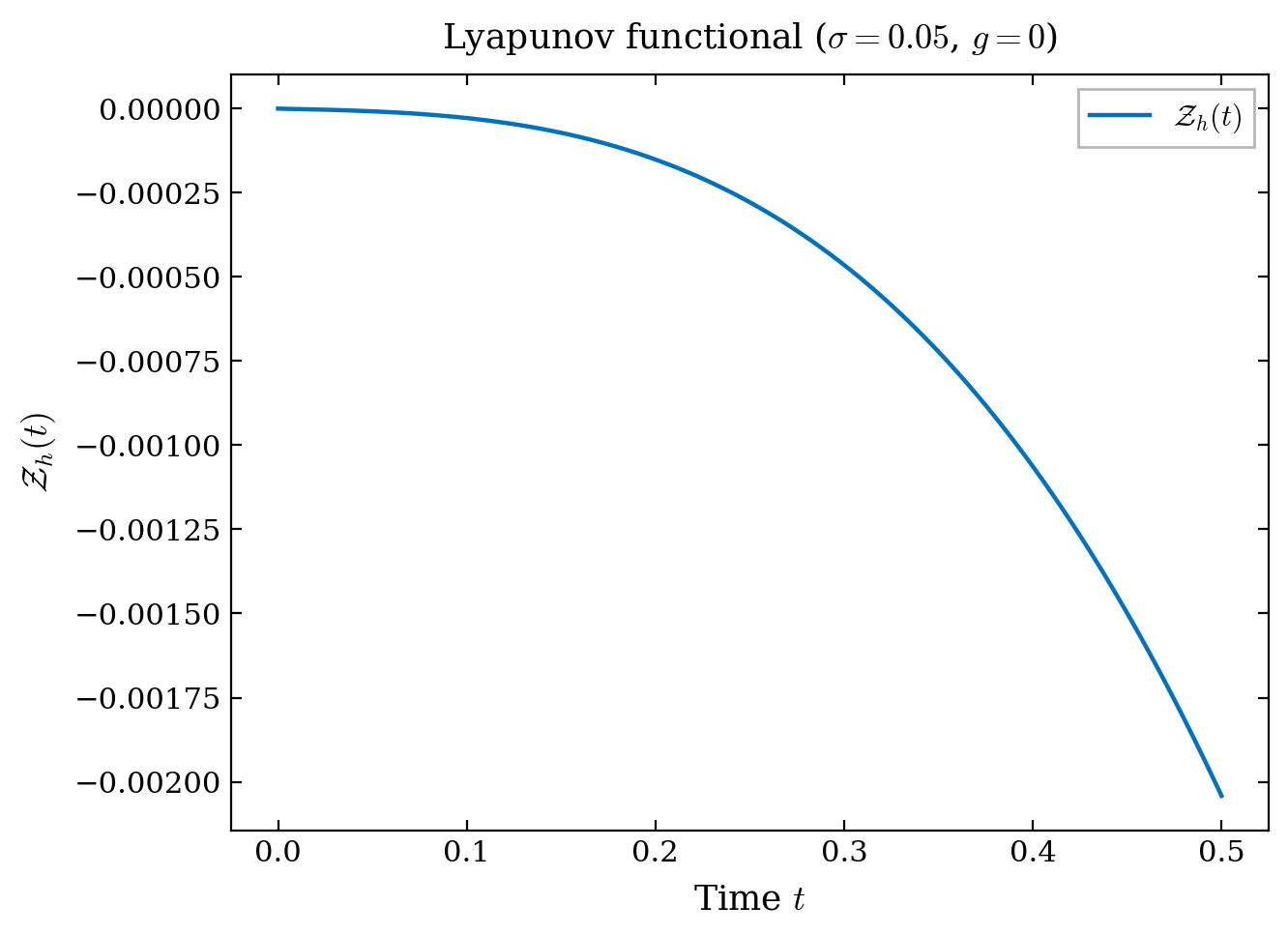}
\caption{Linear wave equation: Evolution of the discrete Lyapunov functional
$\mathcal{Z}_h(t)$ over $[0,0.5]$ ($\sigma=0.05$, $g=0$).}
\label{fig:ex1-lyapunov}
\end{figure}

Figure~\ref{fig:ex1-lyapunov} shows the time history of the discrete
Lyapunov functional $\mathcal{Z}_h(t)$.  In the presence of the
external source~$f$, the functional evolves smoothly and its
trajectory is consistent with the discrete energy identity established
in Lemma~\ref{lem:energy}.

\subsection{Cubic nonlinearity}\label{subsec:ex2}

We now assess the robustness of the CN--BDF2 SIPG scheme in the
nonlinear regime by considering the cubic reaction term $g(u)=u^3$.
The manufactured exact solution is
\begin{equation}\label{eq:ex2-exact}
  u(x,y,t) = e^{t}\,x\,y\,(1-x)(1-y),
\end{equation}
with damping coefficient $\sigma=1$ and final time $T=0.5$.  The
primitive is $F(s)=s^4/4$, and the chord-slope operator reads
$G(a,b)=(a^2+b^2)(a+b)/4$ for $a\neq b$.

We employ DG polynomial degree $k=1$ on the same mesh sequence as in
Section~\ref{subsec:ex1}.  To accommodate the additional stiffness
introduced by the cubic nonlinearity, the time step is reduced to
$\tau=h^2/3$.

\begin{table}[htbp]
\centering
\caption{Spatial convergence for DG($k{=}1$),
$u=e^t xy(1{-}x)(1{-}y)$, $\sigma=1$, $g(u)=u^3$, $\tau=h^2/3$.}
\label{tab:ex2}
\begin{tabular}{r c c c c c c}
\toprule
$M$ & $h$ & $\tau$ & $\|e_h\|$ & rate
    & $\|e_h\|_{DG}$ & rate \\
\midrule
  8 & $1.768\times10^{-1}$ & $1.042\times10^{-2}$
    & $1.236\times10^{-3}$ & ---
    & $3.795\times10^{-2}$ & --- \\
 16 & $8.839\times10^{-2}$ & $2.604\times10^{-3}$
    & $3.450\times10^{-4}$ & $1.84$
    & $1.807\times10^{-2}$ & $1.07$ \\
 32 & $4.419\times10^{-2}$ & $6.510\times10^{-4}$
    & $9.062\times10^{-5}$ & $1.93$
    & $7.707\times10^{-3}$ & $1.23$ \\
 64 & $2.210\times10^{-2}$ & $1.628\times10^{-4}$
    & $2.328\times10^{-5}$ & $1.96$
    & $3.416\times10^{-3}$ & $1.17$ \\
128 & $1.105\times10^{-2}$ & $4.069\times10^{-5}$
    & $5.901\times10^{-6}$ & $1.98$
    & $1.428\times10^{-3}$ & $1.26$ \\
\bottomrule
\end{tabular}
\end{table}

Table~\ref{tab:ex2} demonstrates that the optimal convergence rates
$\mathcal{O}(h^{k+1})$ in $L^2$ and $\mathcal{O}(h^k)$ in the
discrete energy norm are preserved under the cubic nonlinearity.  The
$L^2$ rates increase monotonically toward~$2$ with mesh refinement,
while the energy-norm rates cluster near~$1$.  The mild oscillation in
the energy rates on coarse meshes ($1.07$--$1.26$) is characteristic
of nonlinear problems and does not indicate any loss of asymptotic
optimality.

\subsubsection{Solution profiles and Lyapunov functional}

\begin{figure}[htbp]
\centering
\begin{minipage}[t]{0.48\textwidth}
  \centering
  \includegraphics[width=\textwidth]{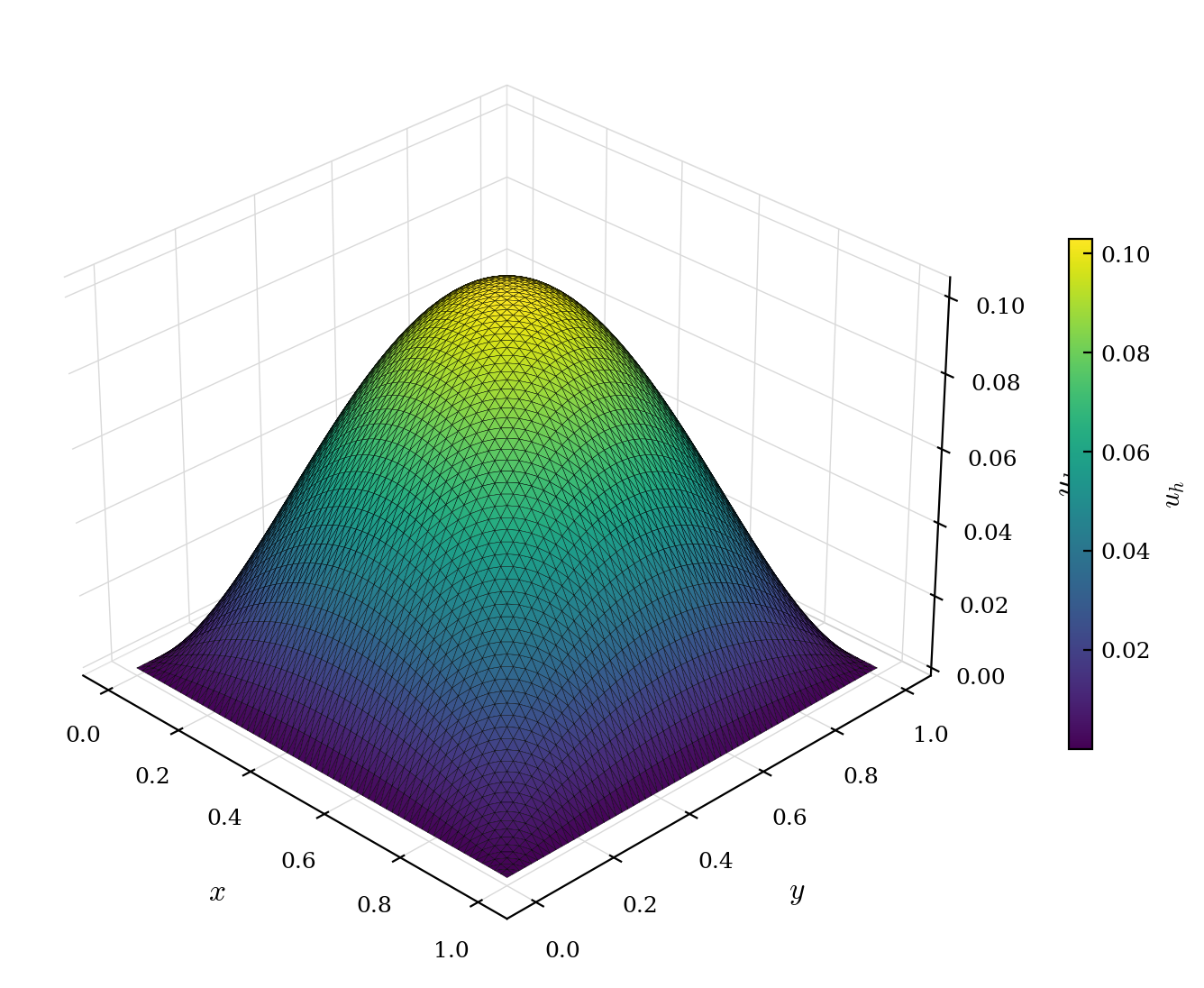}
  \caption{Cubic nonlinearity: Surface plot of $u_h$ at $T=0.5$ ($M=128$, $k=1$,
  $g(u)=u^3$).}
  \label{fig:ex2-surface}
\end{minipage}\hfill
\begin{minipage}[t]{0.48\textwidth}
  \centering
  \includegraphics[width=\textwidth]{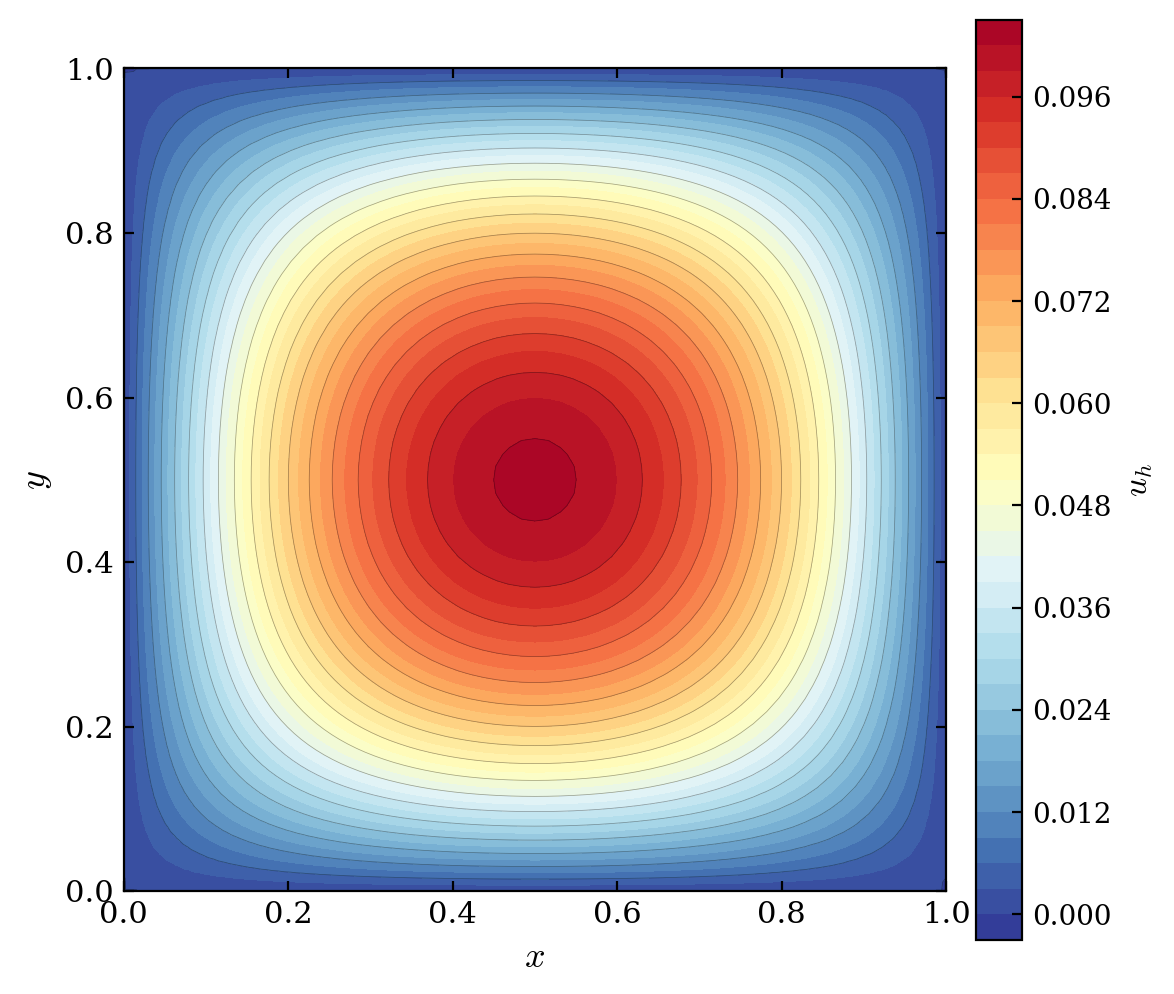}
  \caption{Cubic nonlinearity: Contour plot of $u_h$ at $T=0.5$ ($M=128$, $k=1$,
  $g(u)=u^3$).}
  \label{fig:ex2-contour}
\end{minipage}
\end{figure}

Figures~\ref{fig:ex2-surface} and~\ref{fig:ex2-contour} display the
numerical solution on the finest mesh.  The discrete solution is
smooth and exhibits the expected diagonal symmetry
$u(x,y,t)=u(y,x,t)$, confirming that the SIPG scheme preserves the
qualitative structure of the exact solution in the nonlinear regime.

\begin{figure}[htbp]
\centering
\includegraphics[width=0.65\textwidth]{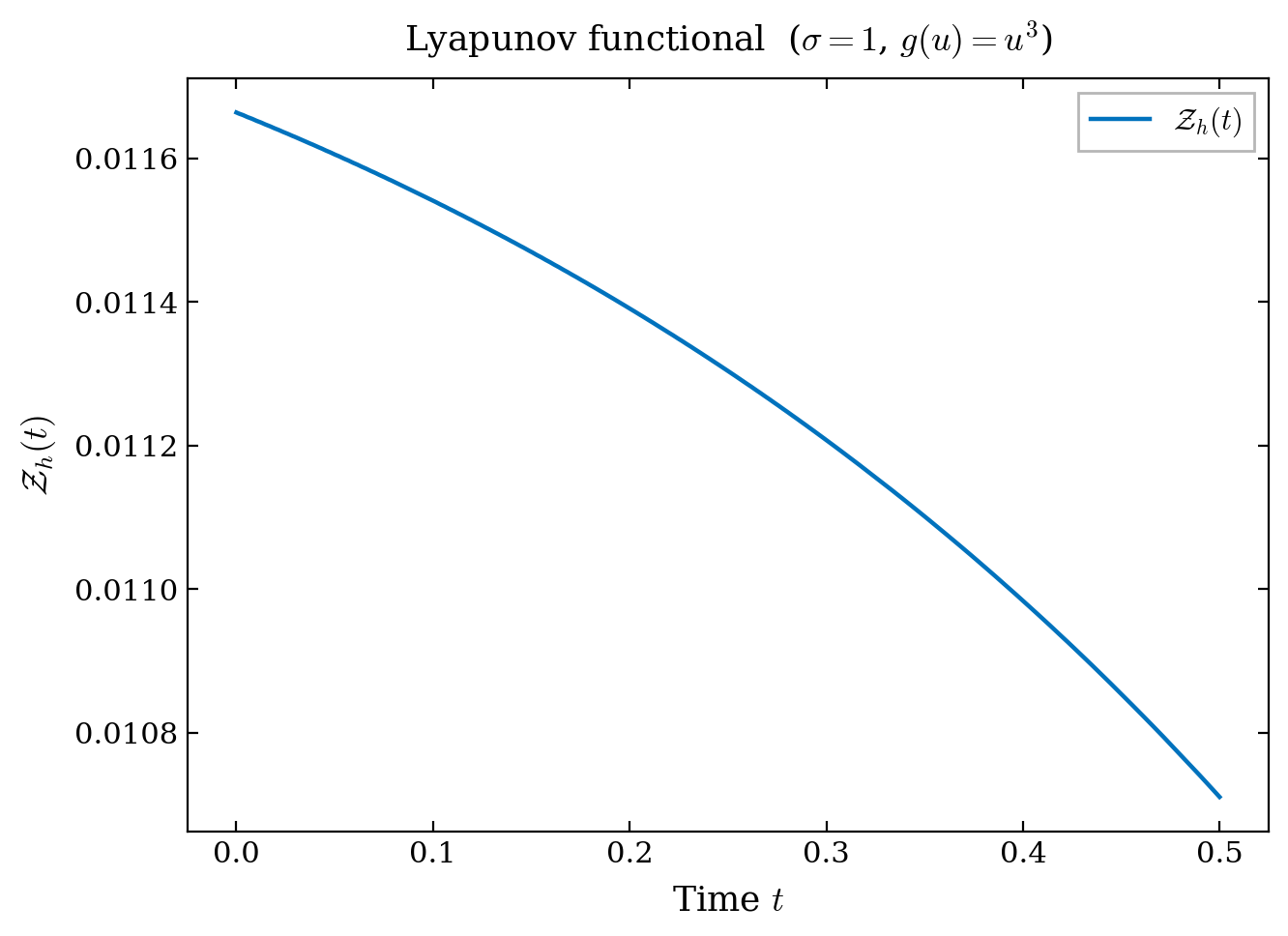}
\caption{Cubic nonlinearity: Evolution of the discrete Lyapunov functional
$\mathcal{Z}_h(t)$ over $[0,0.5]$ ($\sigma=1$, $g(u)=u^3$).}
\label{fig:ex2-lyapunov}
\end{figure}

Figure~\ref{fig:ex2-lyapunov} plots the evolution of the discrete
Lyapunov functional $\mathcal{Z}_h(t)$ for the cubic nonlinearity.  The stronger
damping ($\sigma=1$) combined with the dissipative contribution of the
cubic potential yields a smooth, monotonically evolving energy
trajectory, in agreement with the identity of
Lemma~\ref{lem:energy}.

\subsection{Sine-Gordon equation with energy decay}%
\label{subsec:sinegordon}
As a final test, we consider the damped sine-Gordon equation on
$\Omega=(-10,10)^2$ with homogeneous Neumann boundary conditions:
\begin{equation}\label{eq:sg}
  u_{tt} + \sigma\,u_t - \Delta u + \sin(u) = 0
  \quad\text{in }\Omega\times(0,T],\qquad
  \frac{\partial u}{\partial\mathbf{n}} = 0
  \ \text{on }\partial\Omega,
\end{equation}
with the two-kink initial datum
\begin{equation}\label{eq:sg-ic}
  u(x,y,0) = 4\bigl(\arctan(e^x) + \arctan(e^y)\bigr),
  \qquad u_t(x,y,0) = 0.
\end{equation}
Since $g(s)=\sin s$ is smooth,
the energy estimates of Lemma~\ref{lem:energy} hold without modification.
We set $M=40$ ($h=0.5$), $k=1$, $\tau=0.05$, $T=10$,
and compare the undamped ($\sigma=0$) and damped ($\sigma=1$) regimes.

Figures~\ref{fig:sg-undamped}--\ref{fig:sg-damped} display
surface and contour plots at $t=2,6,10$.
When $\sigma=0$, the kink fronts interact elastically near $t\approx 6$
and separate with unchanged amplitude,
while the solution profiles at $t=2$ and $t=10$ are symmetric —
consistent with energy conservation.
When $\sigma=1$, the kink amplitudes visibly decrease with time
and the fronts progressively broaden,
reflecting the monotone energy dissipation (Lemma \ref{lem:energy}).

This observation is confirmed quantitatively by the discrete
Lyapunov functional plotted in Figure~\ref{fig:sg-lyapunov}:
$\mathcal{Z}_h$ remains constant (up to solver tolerance) for $\sigma=0$
and decays monotonically for $\sigma=1$,
verifying that the CN--BDF2 SIPG scheme preserves the correct
energy structure at the fully discrete level.

\begin{figure}[htbp]
\centering
\begin{minipage}[t]{0.28\textwidth}\centering
  \includegraphics[width=\textwidth]{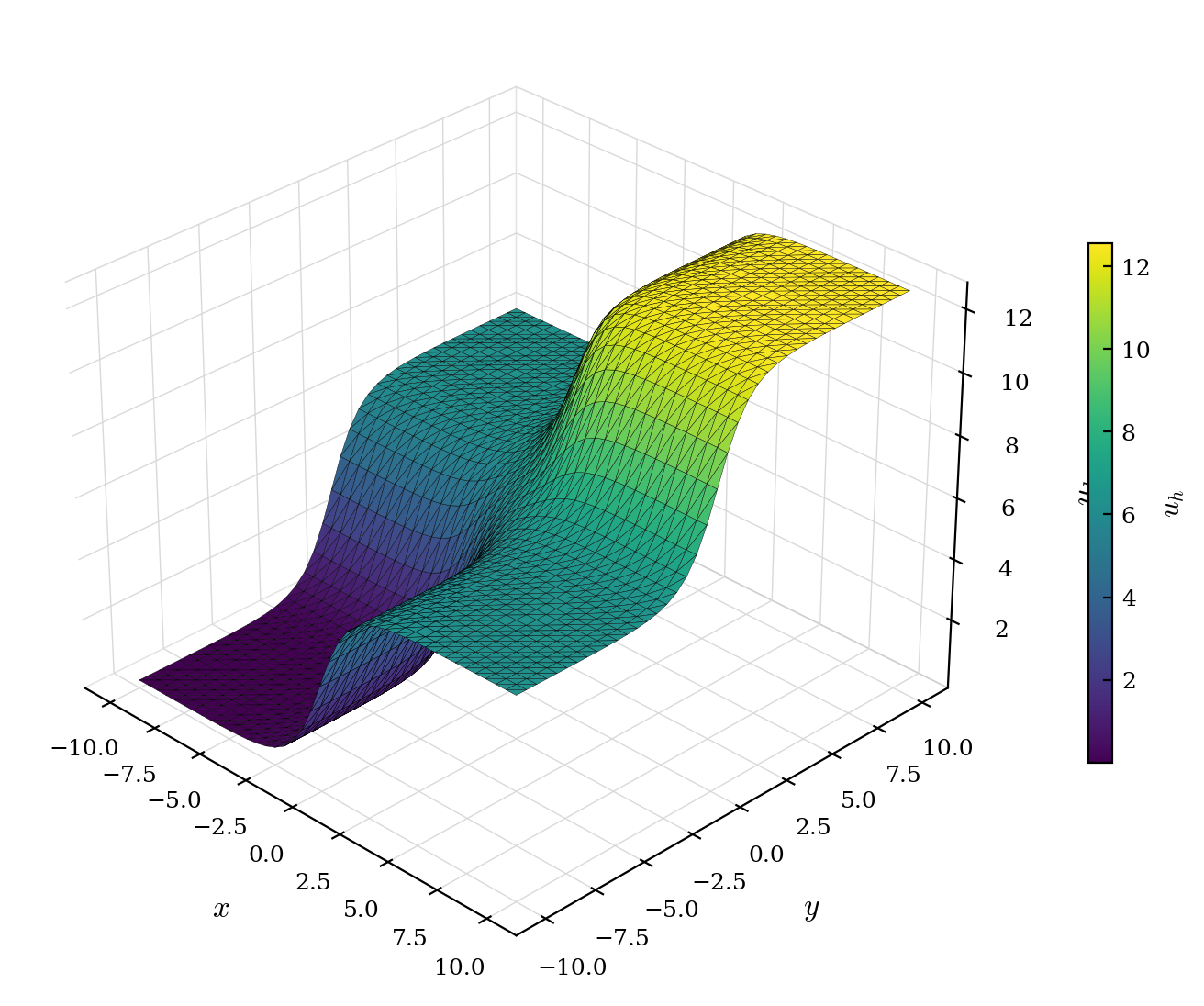}
  \caption*{(a) Surface, $t=2$}
\end{minipage}\hfill
\begin{minipage}[t]{0.28\textwidth}\centering
  \includegraphics[width=\textwidth]{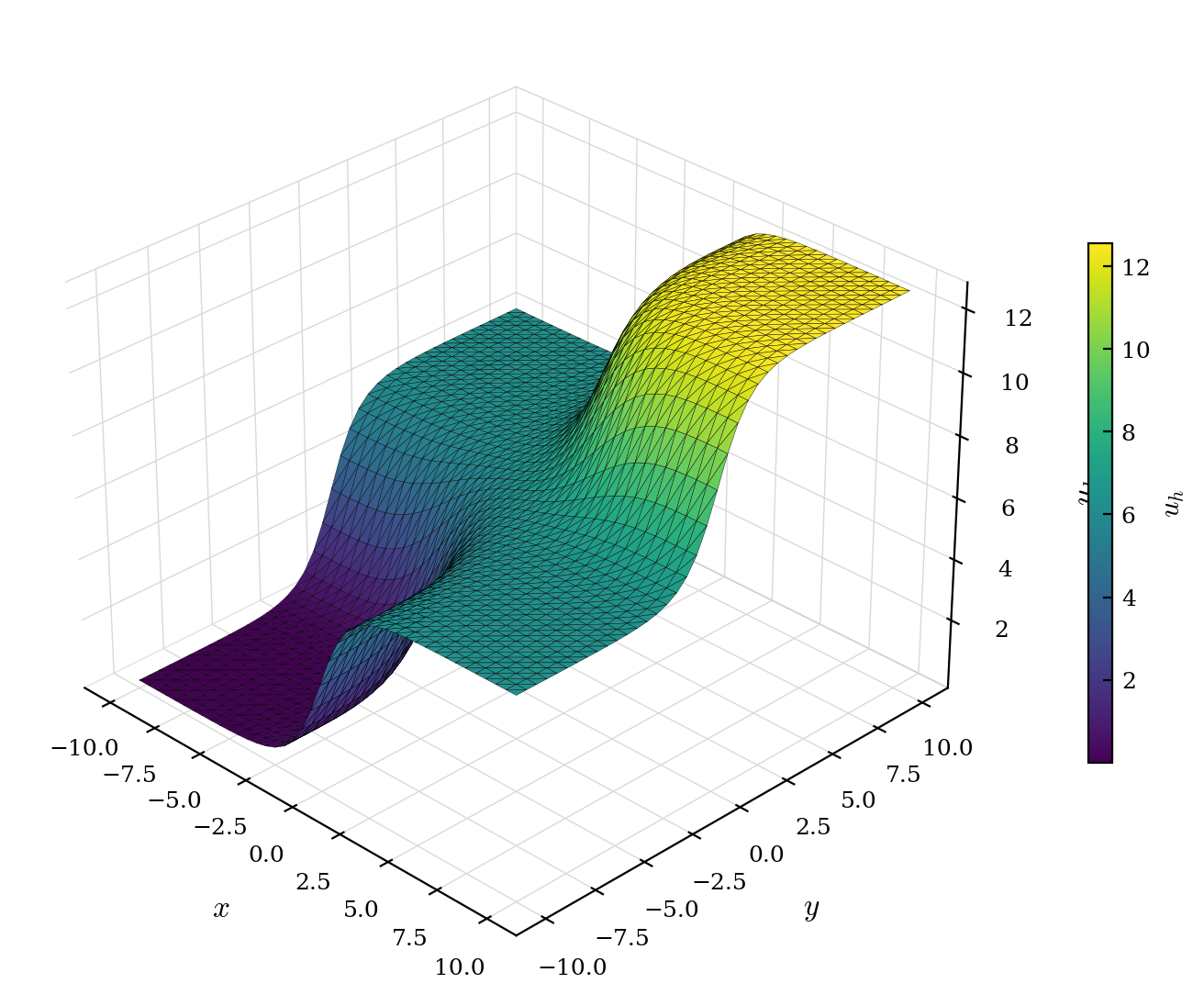}
  \caption*{(b) Surface, $t=6$}
\end{minipage}\hfill
\begin{minipage}[t]{0.28\textwidth}\centering
  \includegraphics[width=\textwidth]{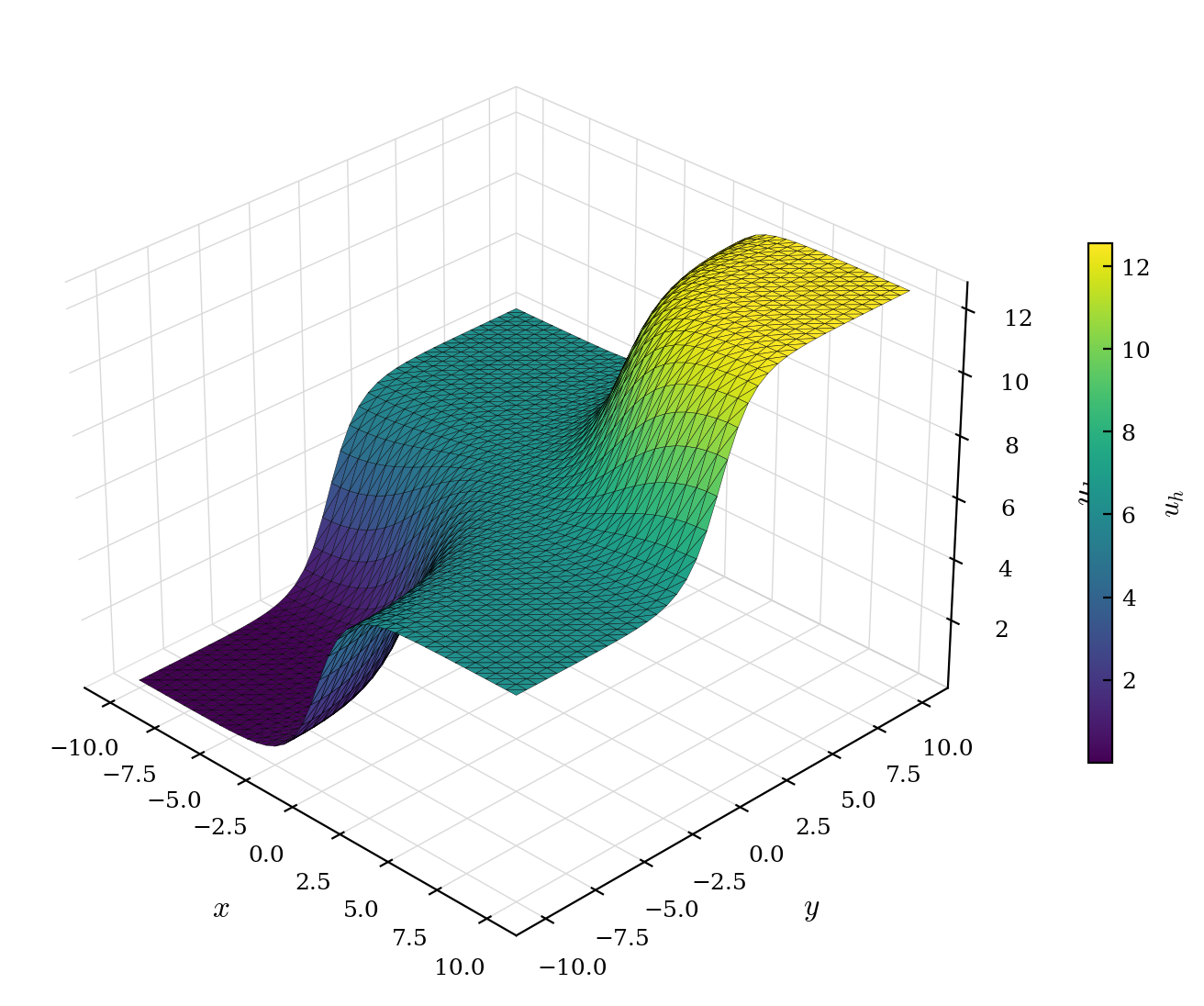}
  \caption*{(c) Surface, $t=10$}
\end{minipage}
\vspace{4pt}
\begin{minipage}[t]{0.26\textwidth}\centering
  \includegraphics[width=\textwidth]{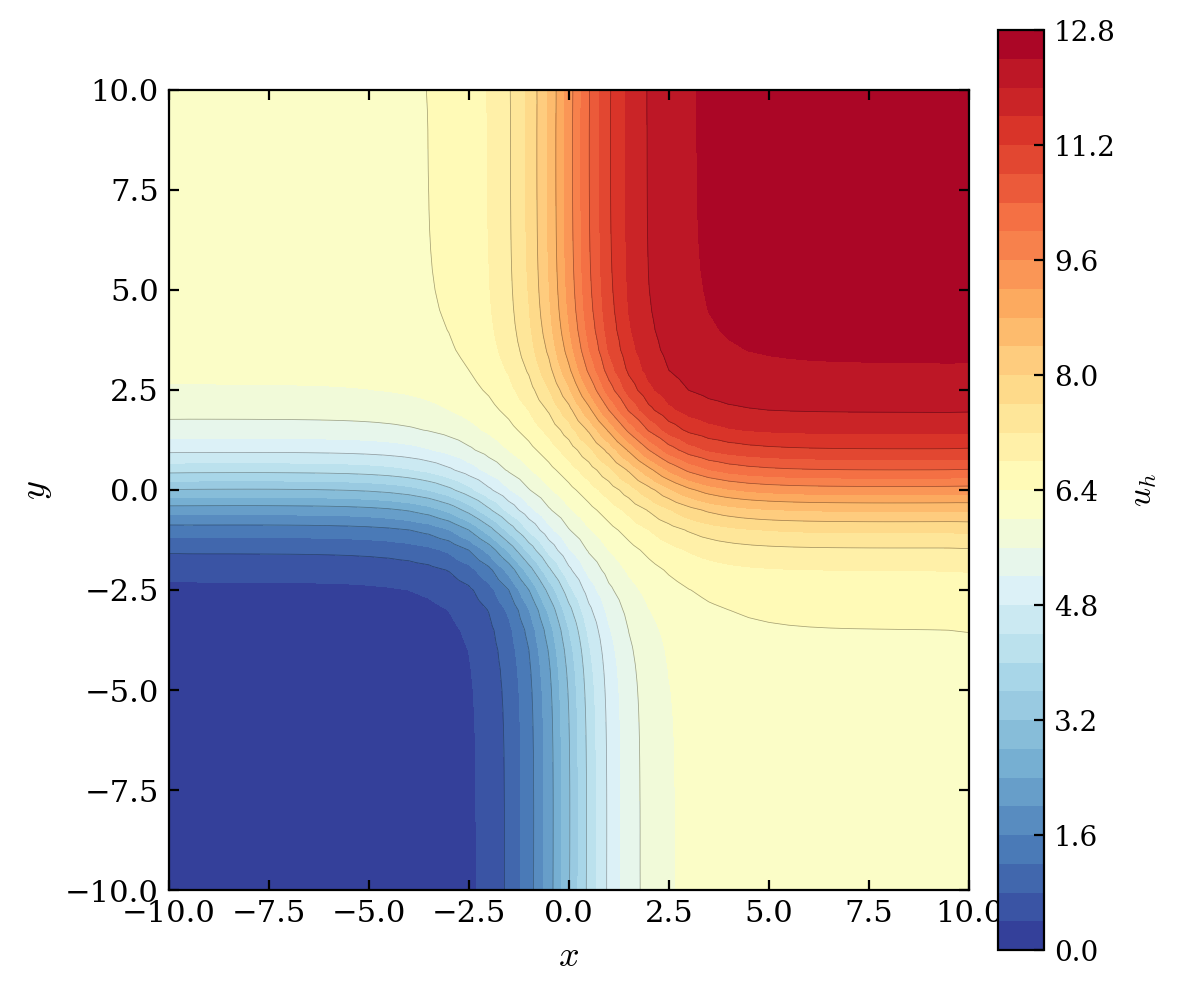}
  \caption*{(d) Contour, $t=2$}
\end{minipage}\hfill
\begin{minipage}[t]{0.26\textwidth}\centering
  \includegraphics[width=\textwidth]{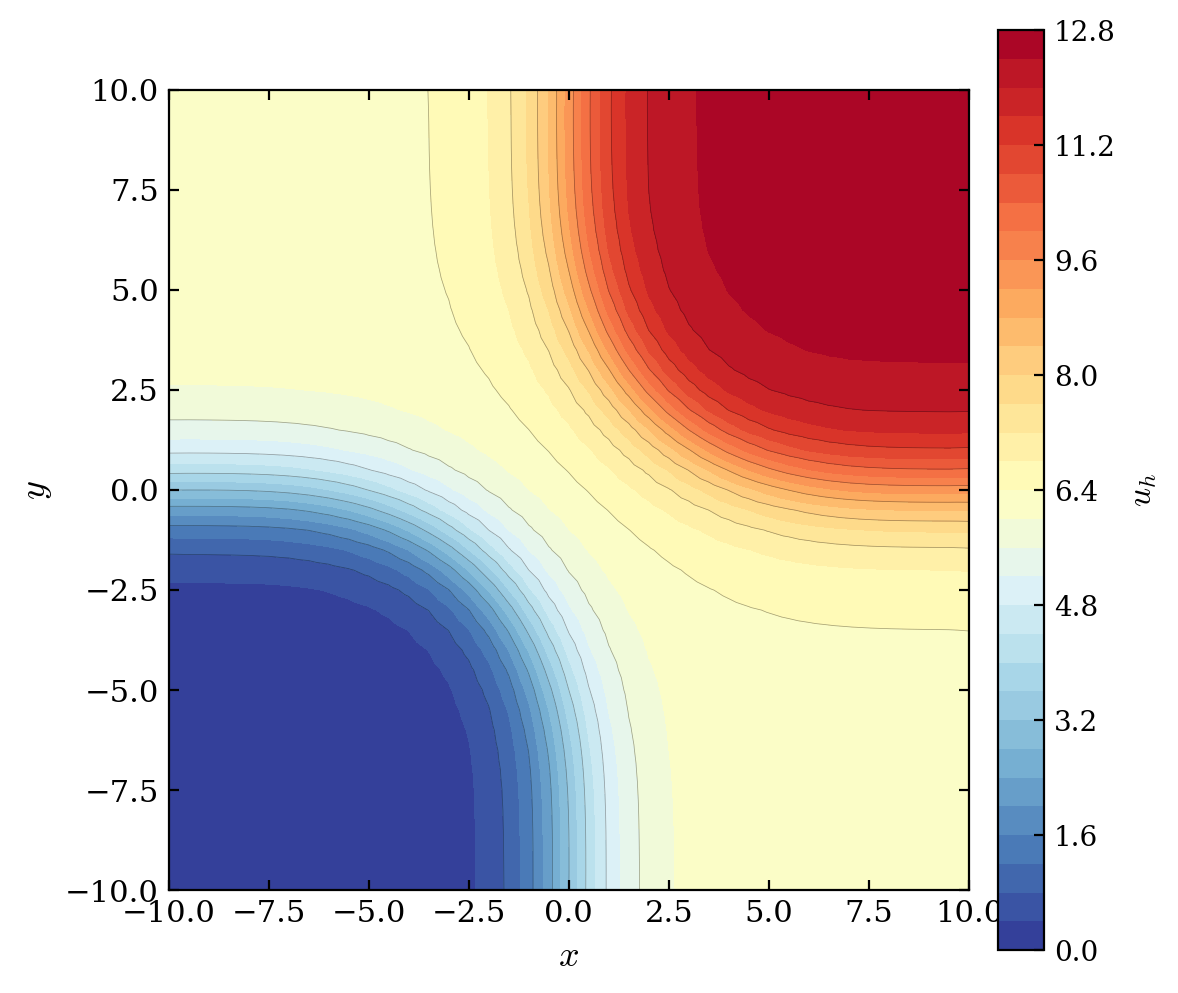}
  \caption*{(e) Contour, $t=6$}
\end{minipage}\hfill
\begin{minipage}[t]{0.26\textwidth}\centering
  \includegraphics[width=\textwidth]{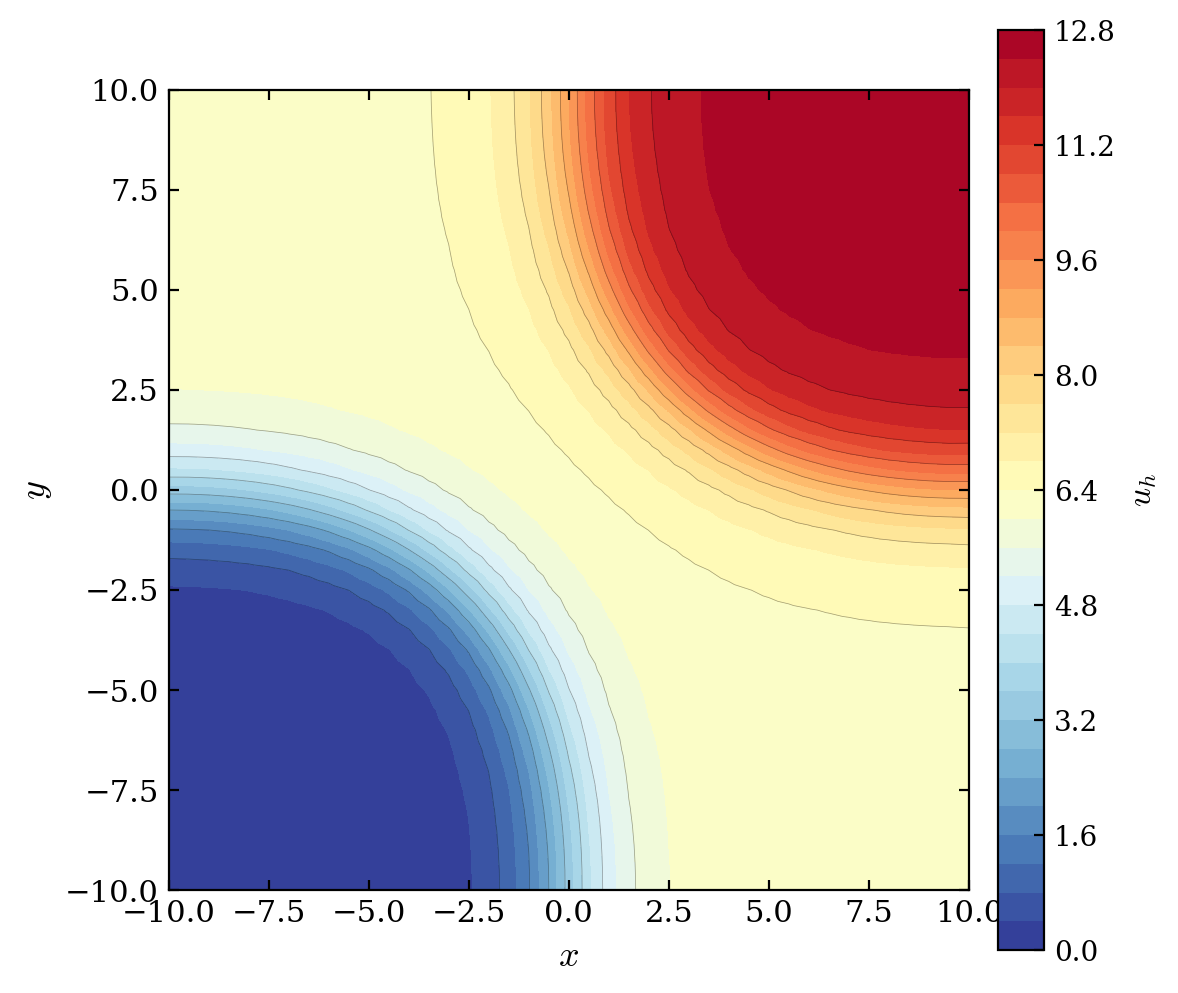}
  \caption*{(f) Contour, $t=10$}
\end{minipage}
\caption{Undamped sine-Gordon ($\sigma=0$): elastic kink--kink interaction with conserved amplitude.}
\label{fig:sg-undamped}
\end{figure}

\begin{figure}[htbp]
\centering
\begin{minipage}[t]{0.28\textwidth}\centering
  \includegraphics[width=\textwidth]{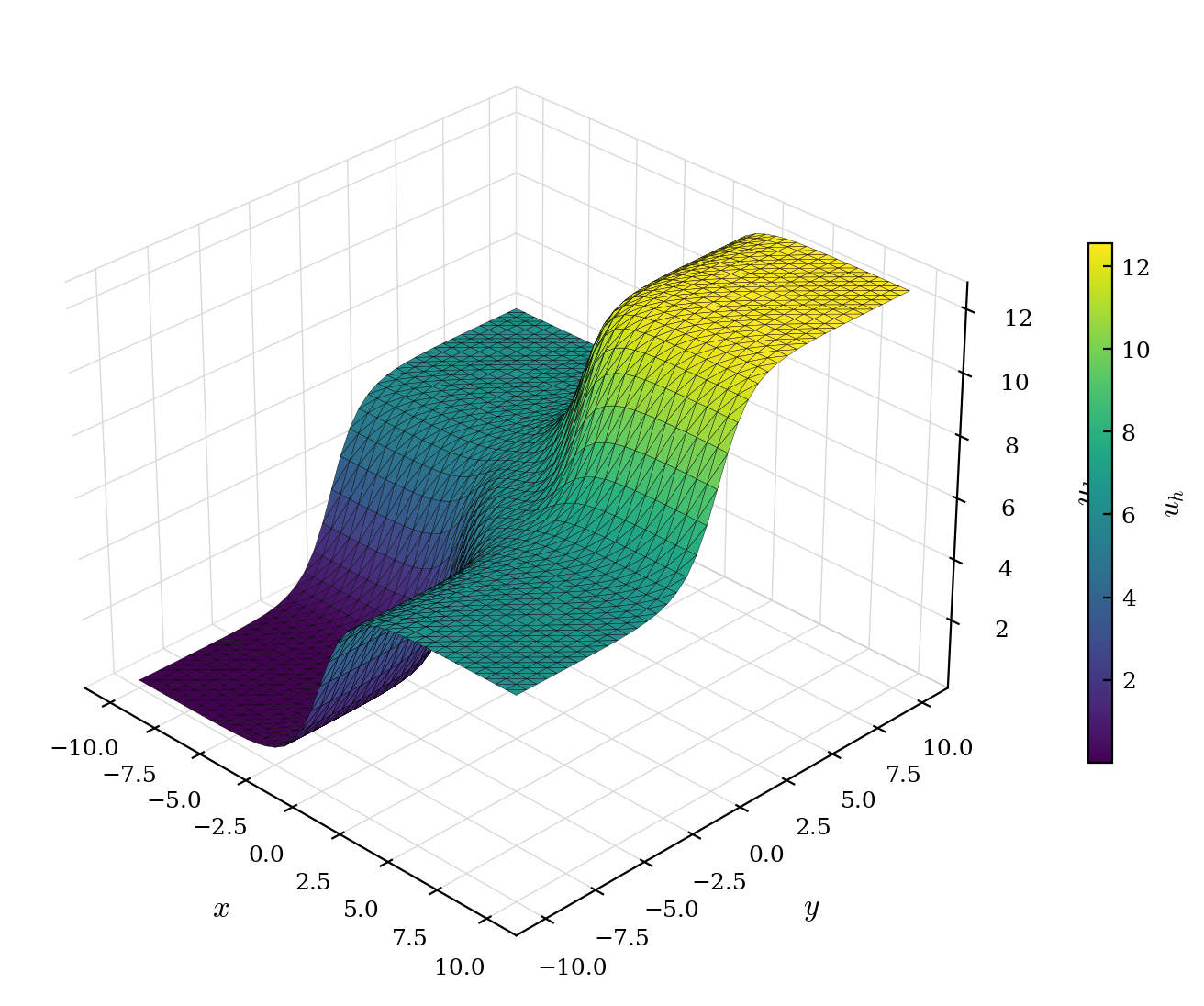}
  \caption*{(a) Surface, $t=2$}
\end{minipage}\hfill
\begin{minipage}[t]{0.28\textwidth}\centering
  \includegraphics[width=\textwidth]{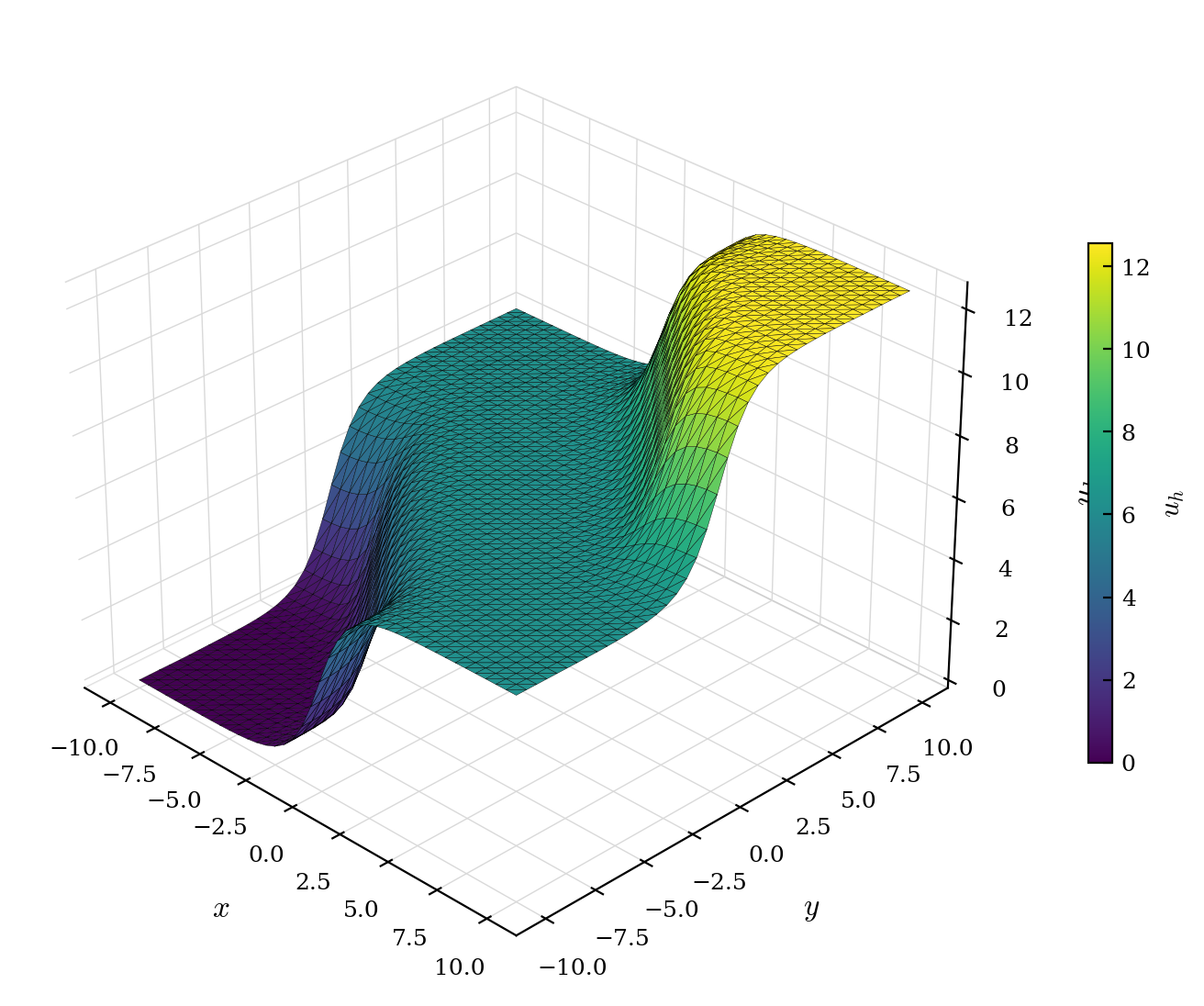}
  \caption*{(b) Surface, $t=6$}
\end{minipage}\hfill
\begin{minipage}[t]{0.28\textwidth}\centering
  \includegraphics[width=\textwidth]{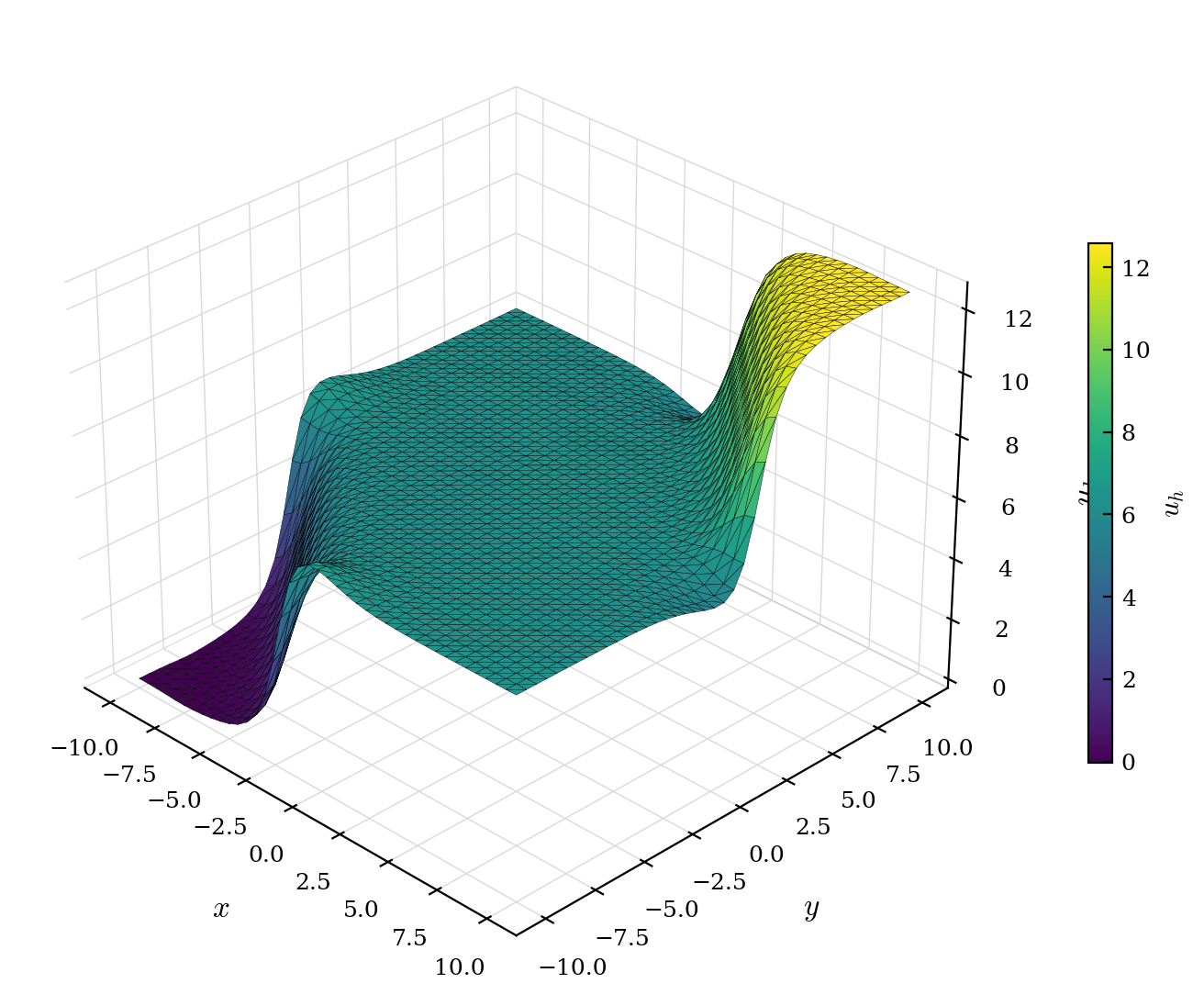}
  \caption*{(c) Surface, $t=10$}
\end{minipage}
\vspace{4pt}
\begin{minipage}[t]{0.26\textwidth}\centering
  \includegraphics[width=\textwidth]{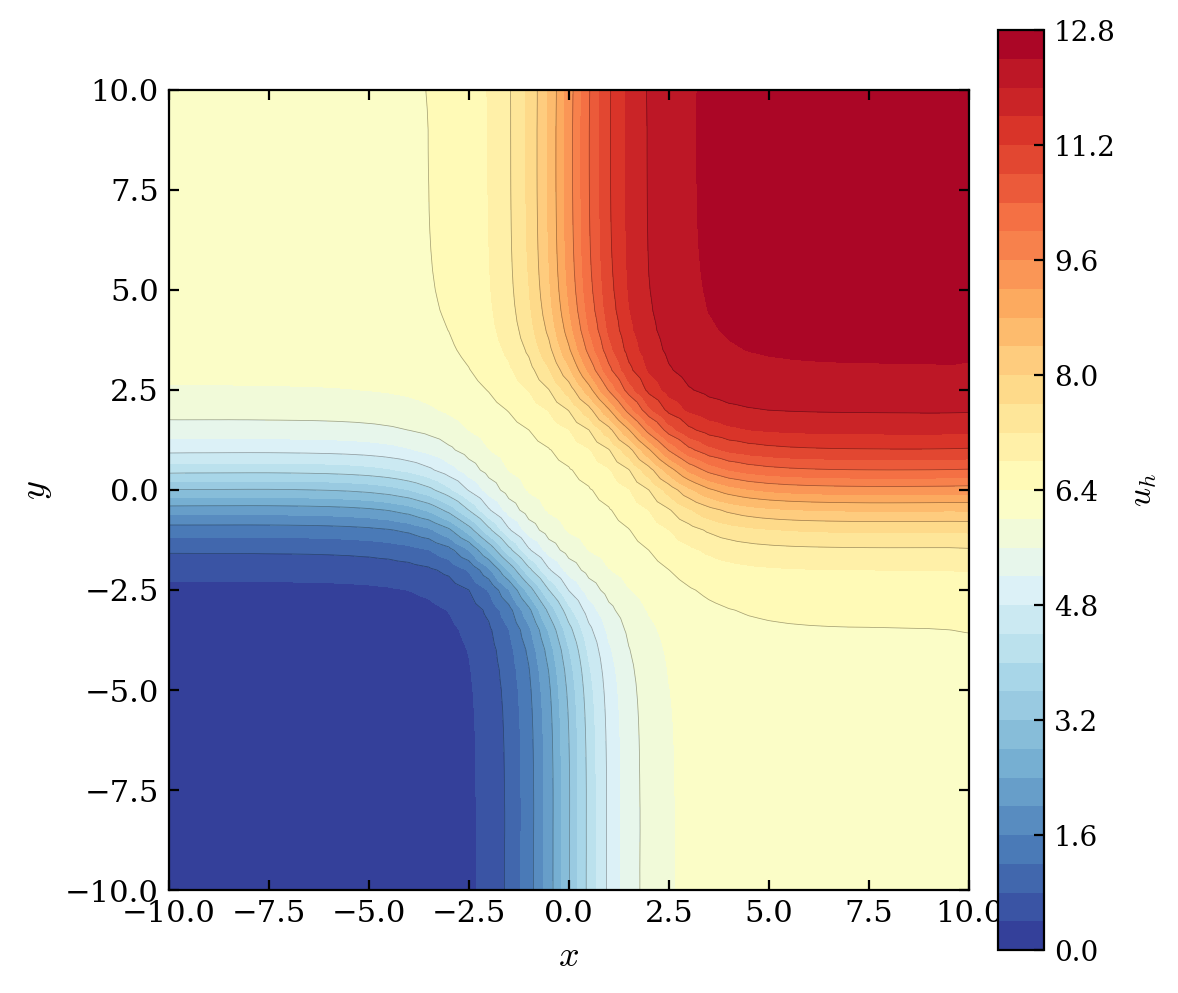}
  \caption*{(d) Contour, $t=2$}
\end{minipage}\hfill
\begin{minipage}[t]{0.26\textwidth}\centering
  \includegraphics[width=\textwidth]{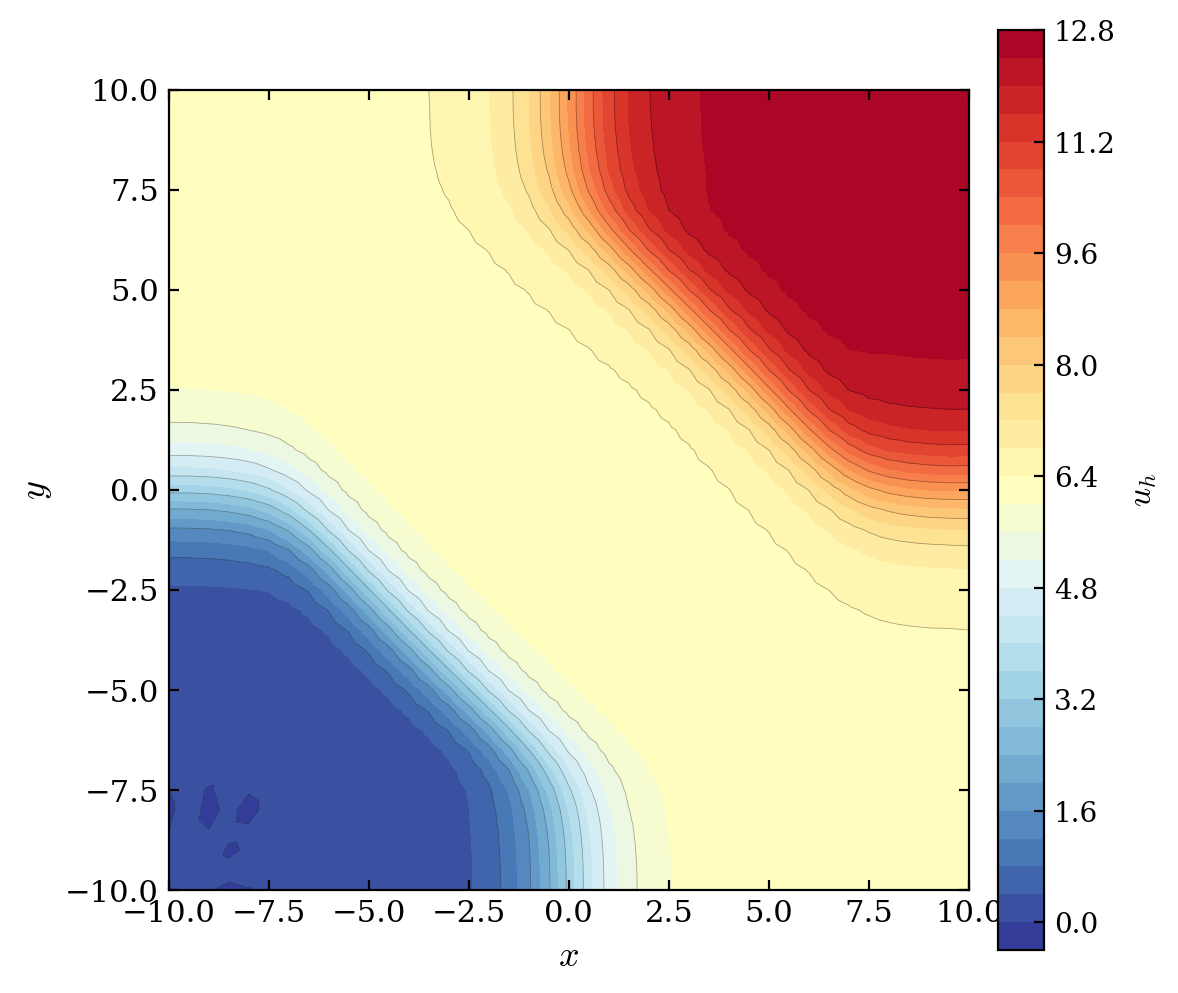}
  \caption*{(e) Contour, $t=6$}
\end{minipage}\hfill
\begin{minipage}[t]{0.26\textwidth}\centering
  \includegraphics[width=\textwidth]{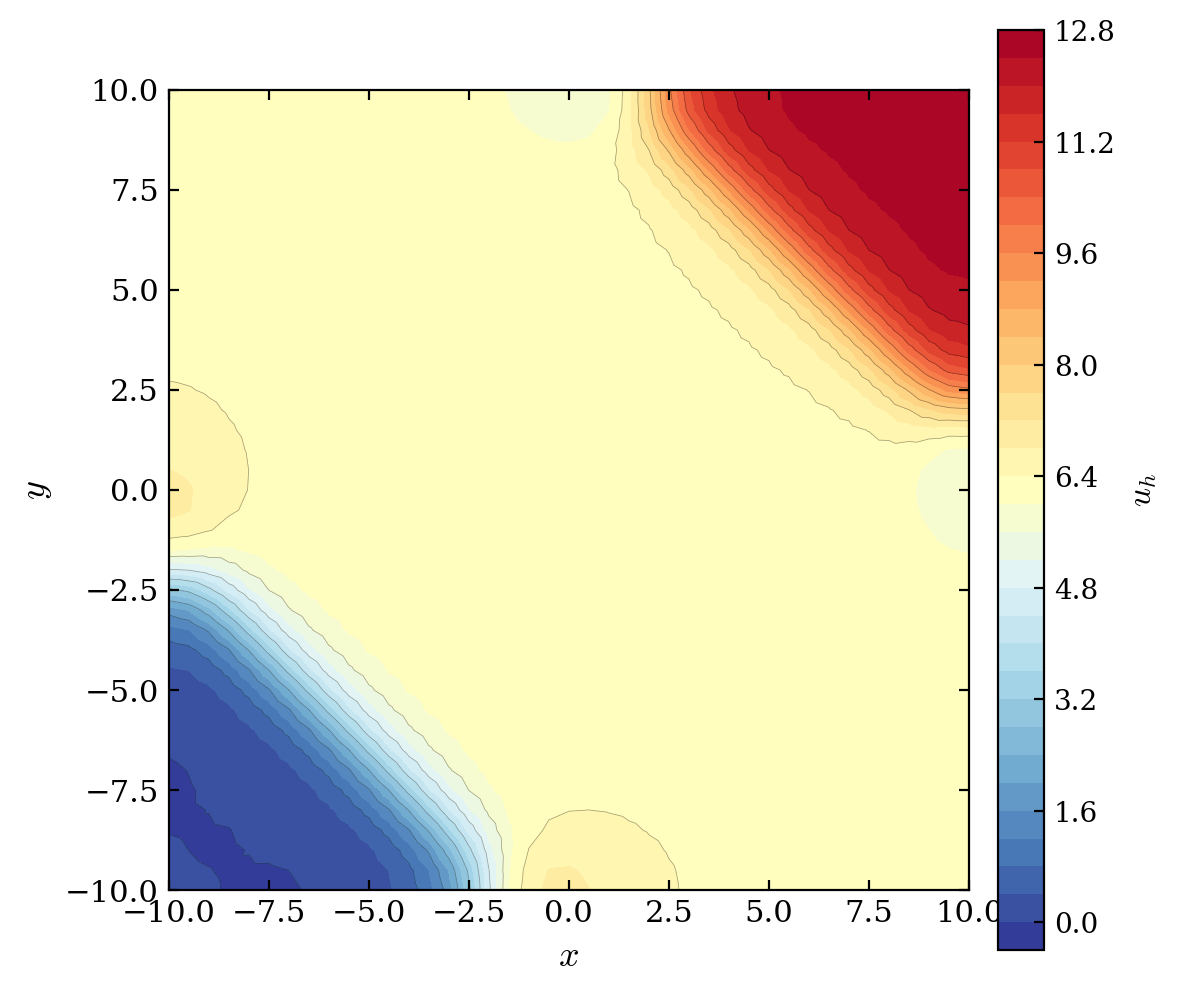}
  \caption*{(f) Contour, $t=10$}
\end{minipage}
\caption{Damped sine-Gordon ($\sigma=1$): progressive amplitude decay and front broadening due to energy dissipation.}
\label{fig:sg-damped}
\end{figure}

\begin{figure}[htbp]
\centering
\includegraphics[width=0.70\textwidth]{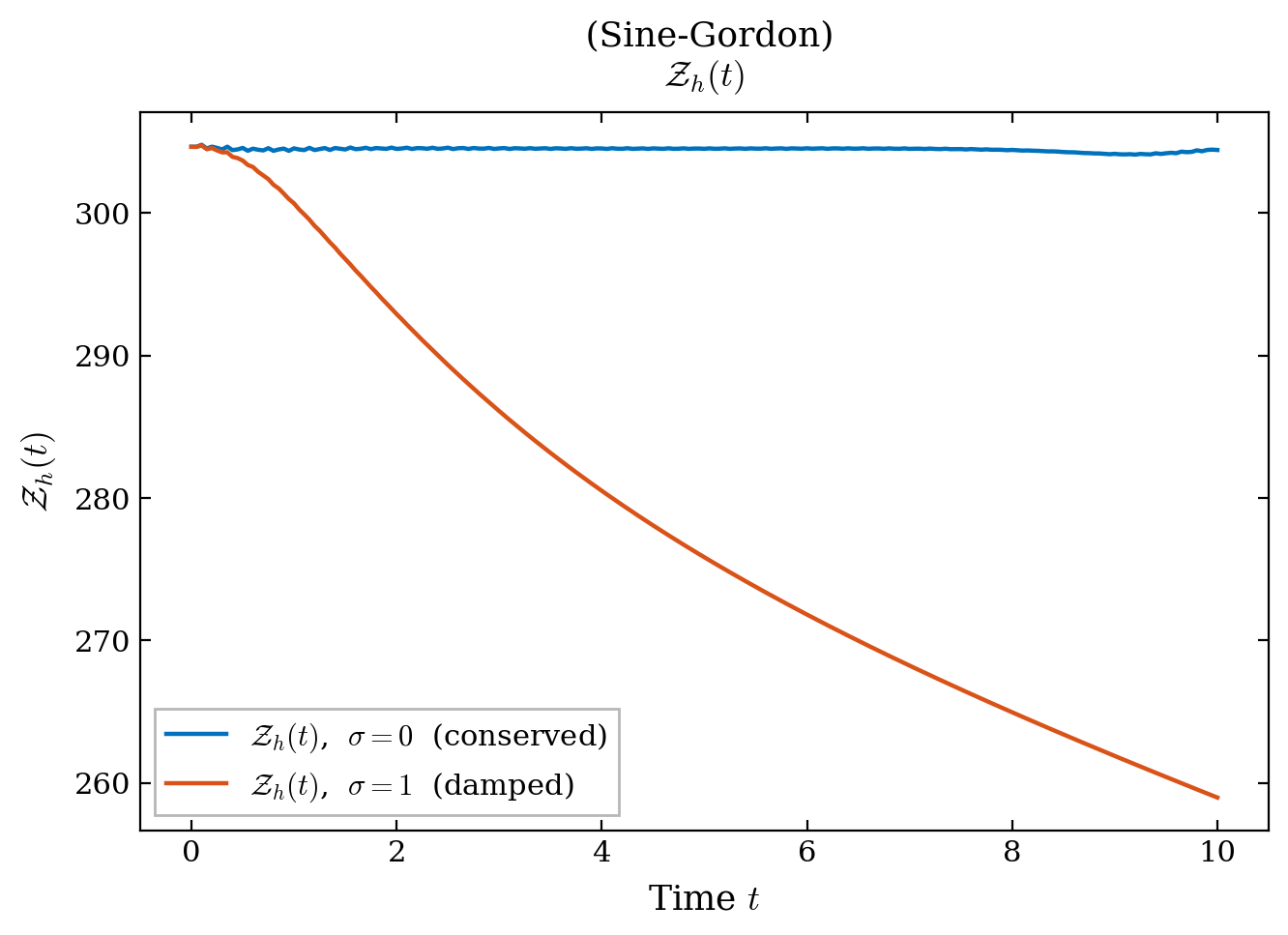}
\caption{Discrete Lyapunov functional $\mathcal{Z}_h(t)$:
conserved for $\sigma=0$, monotonically decaying for $\sigma=1$.}
\label{fig:sg-lyapunov}
\end{figure}
Although our analysis assumes homogeneous Dirichlet conditions,
the energy structure of Lemma~\ref{lem:energy} requires only
symmetry and non-negativity of ${\cal A}_h$, both of which hold
in the Neumann case.
 \section{Concluding Remarks}\label{sec:conclusion}

We have presented and analyzed a symmetric interior penalty discontinuous Galerkin (SIPG) method combined with the CN--BDF2 time-stepping scheme for the weakly damped semilinear wave equation. The chord-slope operator $G(a,b)$ preserves the exact discrete energy structure without requiring global Lipschitz continuity of the nonlinearity. We established existence and uniqueness of the fully discrete solution (Lemma~\ref{lem:existence}) and derived optimal a priori error estimates of order $\mathcal{O}(h^{k}+\tau^{2})$ in the discrete energy norm and $\mathcal{O}(h^{k+1}+\tau^{2})$ in $L^{2}$ (Theorems~\ref{thm:error}--\ref{thm:l2-error}). Numerical experiments confirm the theoretical rates for linear and nonlinear test problems and demonstrate that the discrete Lyapunov framework correctly captures long-time energy dissipation and conservation. Future work includes extending the analysis to strongly damped (Kelvin--Voigt) models, developing $hp$-adaptive strategies, and investigating uniform-in-time error estimates under the discrete Lyapunov framework.\\

\textbf{CRediT authorship contribution statement :}

Ajeet Singh: Writing – review and editing, Writing – original draft, Conceptualization, Methodology, Validation, Software. \\
Abhinav Jha: Writing—review and editing, supervision,  conceptualization, Methodology. Data availability.\\

\textbf{Data Availability Statement :}
No new data were created or analyzed in this study.\\

\textbf{Acknowledgment}
The work of AS has been supported by the IIT Gandhinagar Grant: IP/52012, and the work of AJ has been partially supported by the IIT Gandhinagar Internal Project: IP/52016 and INSPIRE Faculty Fellowship Research Grant: DST/INSPIRE/04/2024/000202.
\bibliographystyle{plain}
\bibliography{wave_IPDG}
\end{document}